\newtheorem{thm}{Theorem}[section]
\newtheorem{proposition}{Proposition}[section]
\newtheorem{definition}{Definition}[section]
\newtheorem{corollary}{Corollary}[section]
\newtheorem{lemma}{Lemma}[section]
\begin{document}

\title{Generalized Schur algebras}

\author{ Robert May \\
Department of Mathematics and Computer Science\\
Longwood University\\
201 High Street, Farmville, Va. 23909\\
rmay@longwood.edu}
\date{05/10/2014}
\maketitle

\section{Introduction}
    In \cite{MA} and \cite{May1}  ``generalized Schur algebras'', $B(n,r)$, were defined corresponding to a family of monoids $S_r $ which includes the full transformation semigroup $\tau _r = \mathcal{T}_r $, the partial transformation semigroup $\bar \tau_r = \mathcal{PT}_r $, and the rook monoid $\Re _r $ of size $r$.  In many cases a parameterization of the irreducible representations of the algebra $B\left( {n,r} \right)$ was obtained.  In \cite{May2} these algebras were redefined as the ``right generalized Schur algebras'' , $B_R $, corresponding to certain ``double coset algebras''.  Corresponding ``left generalized Schur algebras'' , $B_L $, were also defined in \cite{May2}.  In this paper we use the approach in \cite{May1} to study both the right and left algebras.  The algebras and their multiplication rules are described in sections 2 through 4.  In section 5, we obtain filtrations of these algebras.  These lead, in most cases, to parameterizations of the irreducible representations for both $B_R $ and $B_L $ for a field of characteristic 0 (in section 6) or positive characteristic $p$ (in sections 7 and 8).

\section{The semigroups $S_r$ and the double coset algebra $A(S_r,{\mathbf{G}})$}

Let $\bar \tau _r $ be the set of all maps $\alpha :\{ 0,1, \ldots ,r\}  \to \{ 0,1, \ldots ,r\} {\text{ such that }}\alpha (0) = 0$.  $\bar \tau _r $ is a semigroup under composition and is isomorphic to the partial transformation semigroup $\mathcal{PT}_r$.  Let $\tau _r $ be the full transformation semigroup and $\mathfrak{S}_r $ the symmetric group on $\{ 1,2, \ldots ,r\} $.  Any  $\alpha $ in $\tau _r$ or $\mathfrak{S}_r$ can be extended to $\bar \alpha  \in \bar \tau _r $ by defining $\bar \alpha (0) = 0$, so we can regard $\mathfrak{S}_r$ and $\tau _r $ as subsemigroups of $\bar \tau _r $. We will let $S_r$ represent any subsemigroup of $\bar \tau _r$ which contains $\mathfrak{S}_r $.  For example, $S_r $ could be the "rook semigroup", $\Re_r  = \left\{ {\alpha  \in \bar \tau _r :\forall i \in \{ 1,2, \ldots r\} ,\left| {\alpha ^{ - 1} (i)} \right| \leqslant 1} \right\}$.  Our main examples for $S_r $ will be $\mathfrak{S}_r ,\tau _r ,\Re_r ,{\text{ and }}\bar \tau _r $.  Note that $\mathfrak{S}_r  = \tau _r  \cap \Re_r {\text{ and }}\bar \tau _r  = \Re_r  \cdot \tau _r $.

Each $S_r $ can be identified with a certain semigroup of matrices.  Let $M_{r + 1} (\mathbb{Z})$ be the set of all $(r + 1) \times (r + 1)$ matrices with entries in $\mathbb{Z}$ (considered as a semigroup under matrix multiplication).  For convenience, label the rows and columns for each $m \in M_{r + 1} \left( \mathbb{Z} \right)$ from $0$ to $r$.  To each $\alpha  \in S_r $ assign a matrix $m(\alpha ) \in M_{r + 1} \left( \mathbb{Z} \right)
$ by setting $m(\alpha )_{i,j}  = \left\{ {\begin{array}{*{20}c}
   {1{\text{ if }}i = \alpha (j)}  \\
   {0{\text{ otherwise}}}  \\
 \end{array} } \right.$.  Note that column $j$ of $m(\alpha )$
 contains exactly one non-zero entry, namely a 1 in row $\alpha (j)$.  The 1 in column 0 is always in row 0.  It is not hard to check that $\alpha  \mapsto m(\alpha )$ gives an injective semigroup homomorphism $S_r  \to M_{r + 1} \left( \mathbb{Z} \right)$, so we will identify $S_r $ with its image in $M_{r + 1} (\mathbb{Z})$.  For $\alpha  \in S_r $we will usually write just $\alpha $ for the corresponding matrix $m(\alpha ){\text{ }}{\text{.}}$  If $\alpha \in R_r$, then $m(\alpha )$ has at most one 1 in rows $1,2,\dots,r$.   If $\alpha \in \tau _r$, then $m(\alpha )$ has only one 1 in row 0 (in column 0).  If $\alpha  \in \mathfrak{S}_r$, then $m(\alpha )$ is the usual permutation matrix bordered with an extra row 0 and column 0.

Let $\Lambda (r,n)$ be the set of all compositions of $r$ with $n$ parts.  So for $\lambda  \in \Lambda (r,n)$ we have $\lambda  = \left\{ {\lambda _i  \in \mathbb{Z}:i = 1,2, \ldots ,n} \right\},\,\,\lambda _i  \geqslant 0,\,\,\sum\limits_{i = 1}^n {\lambda _i }  = r.$  For each $\lambda  \in \Lambda (r,n)$ define ``$\lambda $-blocks'' of integers, $b_i ^\lambda  $, and a ``Young subgroup'', $\mathfrak{S}_\lambda   \subseteq \mathfrak{S}_r \subseteq S_r $ as follows.  First, let
\[b_1 ^\lambda   = \left\{ {k \in \mathbb{Z}:0 < k \leqslant \lambda _1 } \right\} \text{and} \]
\[b_i ^\lambda   = \left\{ {k \in \mathbb{Z}:\lambda _1  + \lambda _2  +  \cdots  + \lambda _{i - 1}  < k \leqslant \lambda _1  + \lambda _2  +  \cdots  + \lambda _i } \right\} \text{for } 1 < i \leqslant n.\]  Thus $
b_i ^\lambda  $ consists of $\lambda _i $ consecutive integers and $
b_i ^\lambda   = \emptyset  \Leftrightarrow \lambda _i  = 0$.  Now let $\mathfrak{S}\left( {b_i ^\lambda  } \right) \subseteq \mathfrak{S}_r $ be the group of all permutations of $b_i ^\lambda  $, so $\mathfrak{S}\left( {b_i ^\lambda  } \right) \cong \mathfrak{S}_{\lambda _i } $.  (Put $
\mathfrak{S}\left( {b_i ^\lambda  } \right) = \left\{ {{\text{identity element in }}\mathfrak{S}_r } \right\}{\text{ when }}b_i ^\lambda   = \emptyset $.)  Finally, define the Young subgroup by $\mathfrak{S}_\lambda   = \prod\limits_{i = 1}^r {\mathfrak{S}\left( {b_i ^\lambda  } \right)}  \cong \prod\limits_{i = 1}^r {\mathfrak{S}_{\lambda _i} }  $.  Notice that each $\mathfrak{S}_\lambda  $ is generated by $\left\{ {s_i :s_i  \in \mathfrak{S}_\lambda  } \right\}$ where $s_i  \in \mathfrak{S}_r$ is the elementary transposition which interchanges $i{\text{ and }}i + 1$.  We often write $G_\lambda  $ for the Young subgroup $\mathfrak{S}_\lambda  $.

For $\lambda ,\mu  \in \Lambda \left( {n,r} \right)$, write $_\lambda  M$ for the set of left  $\mathfrak{S}_\lambda  $-cosets of the form $\mathfrak{S}_\lambda  \alpha \,,\,\alpha  \in S_r $, $M_\mu  $
for the set of right $\mathfrak{S}_\mu  $-cosets  $\alpha \mathfrak{S}_\mu  \,,\,\alpha  \in S_r $, and $_\lambda  M_\mu  $ for the set of all double cosets $\mathfrak{S}_\lambda  \alpha \mathfrak{S}_\mu  \,,\,\alpha  \in S_r $.  Write $D\left( {\lambda ,\alpha ,\mu } \right)$ for the double coset $\mathfrak{S}_\lambda  \alpha \mathfrak{S}_\mu  $.  We may write just $D_\alpha  $ for the double coset $\mathfrak{S}_\lambda  \alpha \mathfrak{S}_\mu  $ when $\lambda ,\mu $ are clear.

Let ${\mathbf{G}} = \left\{ {G_\lambda   = \mathfrak{S}_\lambda  :\lambda  \in \Lambda \left( {r,n} \right)} \right\}$, a family of subgroups of the monoid $S_r $.  The double coset algebra $A = A\left( {S_r ,{\mathbf{G}}} \right)$ was defined in \cite{May2}.  $A$ is a complex vector space with basis $\left\{ {X\left( {\lambda ,D,\mu } \right)\,:\,\lambda ,\mu  \in \Lambda \left( {n,r} \right)\,,\,D \in \,_\lambda  M_\mu  } \right\}$.
The product in $A$ is given by 
    \[X(\lambda ,D_1 ,\nu ) \cdot X\left( {\omega ,D_2 ,\mu } \right) =
    \begin{cases}
    0, &\text{if $\nu  \ne \omega$;}\\
    {\sum\limits_{D \in \,_\lambda  M_\mu  } {a\left( {\lambda ,\mu ,D_1 ,D_2 ,D} \right)X\left( {\lambda ,D,\mu } \right)} }, &\text{if $\nu  = \omega $.}
    \end{cases}
    \]
If $D = D\left( {\lambda ,m,\mu } \right)$, then $a\left( {\lambda ,\mu ,D_1 ,D_2 ,D} \right)$ is the nonnegative integer
\[a\left( {\lambda ,\mu ,D_1 ,D_2 ,D} \right) = \# \left\{ {\left( {m_1 ,m_2 } \right) \in D_1  \times D_2 :m_1 m_2  = m} \right\}.\]

As shown in \cite{May2}, there is an alternative formula for the structure constants $a\left( {\lambda ,\mu ,D_1 ,D_2 ,D} \right)$.   For any double coset $D \in \,_\lambda  M_\mu  $ , the product group $G_\lambda   \times G_\mu  $ acts transitively on $D$ by $(\sigma  \times \pi )(m) = \sigma m\pi ^{ - 1} $ for $m \in D,\sigma  \in G_\lambda  ,\pi  \in G_\mu  $.  Then for any $m \in D$, let $n\left( D \right)$ be the order of the subgroup $G_{\lambda ,\mu ,m} $ of $G_\lambda   \times G_\mu  $ which leaves $m$ fixed:  $n(D) = \# \left\{ {\left( {\sigma  \times \pi } \right):\sigma  \in G_\lambda  ,\pi  \in G_\mu  ,\sigma m\pi ^{ - 1}  = m} \right\}$.  It is easy to check that $G_{\lambda ,\mu ,m} \,,\,G_{\lambda ,\mu ,n} $ are conjugate subgroups whenever $m,n \in D$, so the order $n\left( D \right)$ is independent of the choice of $m \in D$.  For $D_m  \in \,_\lambda  M_\nu  ,D_n  \in \,_\nu  M_\mu  ,D \in \,_\lambda  M_\mu  $  , put $N\left( {D_m ,D_n ,D} \right) = \# \left\{ {\rho  \in G_\nu  :m\rho n \in D} \right\}$ (which is independent of the choices of $m,n$).  Finally, let $o\left( {G_\nu  } \right)$ be the order of the group $G_\nu  $.  Then the structure constants are given by
     \[a\left( {\lambda ,\mu ,D_m ,D_n ,D} \right) = \frac{{o\left( {G_\nu  } \right)N\left( {D_m ,D_n ,D} \right)n(D)}}
{{n\left( {D_m } \right)n\left( {D_n } \right)}}.\]

When $S_r  = \mathfrak{S}_r $, the algebra $A$ is isomorphic to the classical complex Schur algebra $S\left( {r,n} \right)$, so for general $S_r $ we call $A$ the generalized Schur algebra associated with $S_r $.

\section{The left and right $\mathbb{Z}$-forms for $A$}

In \cite{May2}, the left and right $\mathbb{Z}$-forms for the algebra $A = A\left( {S_r ,{\mathbf{G}}} \right)$ were defined.  These algebras, denoted here by $A_L ,\,A_R $ , are $\mathbb{Z}$-algebras with unit such that there are isomorphisms of complex algebras $A_L \mathop  \otimes_\mathbb{Z} \mathbb{C} \cong A \cong A_R \mathop  \otimes_\mathbb{Z} \mathbb{C}$ .  Each of $A_L ,\,A_R $ is a free $\mathbb{Z}$-module with a basis $\left\{ {f\left( {\lambda ,D,\mu } \right)\,:\,\lambda ,\mu  \in \Lambda \left( {n,r} \right)\,,\,D \in \,_\lambda  M_\mu  } \right\}$ corresponding to the collection of all double cosets.  We now describe the (integer) structure constants for these algebras as given in \cite{May2}.
     For $D \in \,_\lambda  M_\mu  $, note that $G_\mu  $ acts transitively on the set of left cosets $C \in \,_\lambda  M,C \subseteq D$.  Let $G_L \left( {\lambda ,C,\mu } \right) = \left\{ {\pi  \in G_\mu  :C\pi  = C} \right\}$ be the subgroup of $G_\mu  $ which fixes a given $C$.  If $C = G_\lambda  \alpha $, then $G_L (\lambda ,C,\mu ) = \left\{ {\pi  \in G_\mu  :\exists \sigma  \in G_\lambda  {\text{ s}}{\text{.t}}{\text{. }}\alpha \pi  = \sigma \alpha } \right\}$ and we write $G_L \left( {\lambda ,\alpha ,\mu } \right) = G_L \left( {\lambda ,C,\mu } \right) \subseteq G_\mu  $.  For any two left cosets $
C_1 ,C_2  \in \,_\lambda  M\,,\,\,C_1 ,C_2  \subseteq D$ the subgroups $G_L \left( {C_1 } \right),G_L (C_2 )$ are conjugate, so we can define $n_L \left( D \right)$ to be the order of $G_L \left( {\lambda ,C,\mu } \right)$ for any such $C$.  If $D = D_\alpha   = G_\lambda  \alpha G_\mu  $, we write  $n_L \left( {\lambda ,\alpha ,\mu } \right) = n_L (D)$.  
     Similarly, for any right coset $C = \alpha G_\mu   \subseteq D$ , let $G_R \left( {\lambda ,C,\mu } \right) = \left\{ {\sigma  \in G_\lambda  :\sigma C = C} \right\} = \left\{ {\sigma  \in G_\lambda  :\exists \pi  \in G_\mu  {\text{ s}}{\text{.t}}{\text{. }}\sigma \alpha  = \alpha \pi } \right\}$ be the subgroup of $G_\lambda  $ which fixes $C$ and let $n_R \left( D \right)$ be the order of $G_R \left( {\lambda ,C,\mu } \right)$ (independent of the choice of $C$).  We also write $G_R \left( {\lambda ,\alpha ,\mu } \right) = G_R \left( {\lambda ,C,\mu } \right) \subseteq G_\lambda  $.  If $D = D_\alpha   = G_\lambda  \alpha G_\mu  $, we write $n_R \left( {\lambda ,\alpha ,\mu } \right) = n_R (D)$.  
     Let $ * _L , * _R $ be the products in $A_L ,A_R $ respectively.  Write $f\left( {\lambda ,\alpha ,\mu } \right)$ for the basis element $f\left( {\lambda ,D_\alpha  ,\mu } \right)$.  Then from \cite{May2} we have the following
\\

\textbf{Multiplication Rules:}
	\[f\left( {\lambda ,\alpha ,\nu } \right) * _R f\left( {\omega ,\beta ,\mu } \right) = f\left( {\lambda ,\alpha ,\nu } \right) * _L f\left( {\omega ,\beta ,\mu } \right) = 0{\text{ if }}\nu  \ne \omega \]

\[f\left( {\lambda ,\alpha ,\nu } \right) * _R f\left( {\nu ,\beta ,\mu } \right) = \sum\limits_{D \in \,_\lambda  M_\mu  } {\frac{{n_R (D)N\left( {D_\alpha  ,D_\beta  ,D} \right)}}
{{n_R \left( {\lambda ,\alpha ,\nu } \right)n_R \left( {\nu ,\beta ,\mu } \right)}}f\left( {\lambda ,D,\mu } \right)} \]

\[f\left( {\lambda ,\alpha ,\nu } \right) * _L f\left( {\nu ,\beta ,\mu } \right) = \sum\limits_{D \in \,_\lambda  M_\mu  } {\frac{{n_L (D)N\left( {D_\alpha  ,D_\beta  ,D} \right)}}
{{n_L \left( {\lambda ,\alpha ,\nu } \right)n_L \left( {\nu ,\beta ,\mu } \right)}}f\left( {\lambda ,D,\mu } \right)} \]

As shown in \cite{May2}, the structure constants appearing in the multiplication rules are always nonnegative integers.

An alternative form of the multiplication rules is sometimes useful.
\begin{lemma} \label{l3.1}
\begin{enumerate} [\upshape(a)]
\item if $\rho _1 ,\rho _2 $ are in the same right $G_R \left( {\nu ,\beta ,\mu } \right)$-coset of $G_\nu  $, then for any $\alpha  \in S_r $, $\alpha \rho _1 \beta$ and $\alpha \rho _2 \beta $ are in the same $G_\lambda - G_\mu $ double coset.
\item if $\rho _1 ,\rho _2 $ are in the same left $G_L \left( {\lambda ,\alpha ,\nu } \right)$-coset of $G_\nu  $, then for any $\beta  \in S_r $, $\alpha \rho _1 \beta {\text{ and }}\alpha \rho _2 \beta $ are in the same $G_\lambda   - G_\mu  $ double coset. 
\end{enumerate}
\end{lemma}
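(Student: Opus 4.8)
\section*{Proof proposal}

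The plan is to prove both statements by a single direct computation, using the defining property of the stabilizer subgroups $G_R(\nu,\beta,\mu)$ and $G_L(\lambda,\alpha,\nu)$ to rewrite $\alpha\rho_1\beta$ as a one-sided translate of $\alpha\rho_2\beta$. Recall that to show $\alpha\rho_1\beta$ and $\alpha\rho_2\beta$ lie in a common double coset it suffices to produce either $\sigma\in G_\lambda$ with $\alpha\rho_1\beta = \sigma\,\alpha\rho_2\beta$, or $\pi\in G_\mu$ with $\alpha\rho_1\beta = \alpha\rho_2\beta\,\pi$; in either case the two elements already share a one-sided $G_\lambda$- or $G_\mu$-coset, which is contained in the double coset $G_\lambda(\alpha\rho_2\beta)G_\mu$.

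For part (a) I would first unwind the hypothesis. Lying in a common right $G_R(\nu,\beta,\mu)$-coset of $G_\nu$ provides an element $h\in G_R(\nu,\beta,\mu)\subseteq G_\nu$ with $\rho_1 = \rho_2 h$, so that $h$ sits adjacent to $\beta$ in $\alpha\rho_1\beta = \alpha\rho_2\,h\beta$. By the defining description $G_R(\nu,\beta,\mu) = \{\sigma\in G_\nu : \exists\,\pi\in G_\mu,\ \sigma\beta = \beta\pi\}$ recorded above, there is $\pi\in G_\mu$ with $h\beta = \beta\pi$. Substituting gives $\alpha\rho_1\beta = \alpha\rho_2\beta\,\pi$, hence $\alpha\rho_1\beta \in (\alpha\rho_2\beta)G_\mu \subseteq G_\lambda(\alpha\rho_2\beta)G_\mu$, which is the assertion.

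Part (b) is the mirror image. Here lying in a common left $G_L(\lambda,\alpha,\nu)$-coset of $G_\nu$ yields $h\in G_L(\lambda,\alpha,\nu)\subseteq G_\nu$ with $\rho_1 = h\rho_2$, placing $h$ adjacent to $\alpha$ in $\alpha\rho_1\beta = \alpha h\,\rho_2\beta$. Using $G_L(\lambda,\alpha,\nu) = \{\pi\in G_\nu : \exists\,\sigma\in G_\lambda,\ \alpha\pi = \sigma\alpha\}$, choose $\sigma\in G_\lambda$ with $\alpha h = \sigma\alpha$; then $\alpha\rho_1\beta = \sigma\,\alpha\rho_2\beta \in G_\lambda(\alpha\rho_2\beta) \subseteq G_\lambda(\alpha\rho_2\beta)G_\mu$, giving the common double coset again.

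The only point requiring care, and the one I would flag as the main obstacle, is matching the side of the coset to the side of the multiplication: in (a) the hypothesis must be read so that $h$ ends up to the right, adjacent to $\beta$ (where the defining relation of $G_R$ applies), while in (b) it must end up to the left, adjacent to $\alpha$ (where the relation for $G_L$ applies). Once the coset convention is fixed consistently so that these adjacencies hold, no further work is needed; in particular the argument uses neither the structure constants nor any fact special to a particular $S_r$, only the transitive left/right actions on cosets already established.
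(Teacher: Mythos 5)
Your proof is correct and is essentially the paper's own argument: in each part you unwind the definition of the stabilizer subgroup to move the coset representative across $\beta$ (producing a right $G_\mu$-translate) or across $\alpha$ (producing a left $G_\lambda$-translate), exactly as the paper does. The only cosmetic difference is writing $\rho_1=\rho_2 h$ rather than $\rho_2=\rho_1\kappa$, which is immaterial.
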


\begin{proof}
  a)  Suppose $\rho _2  = \rho _1 \kappa $ for some $\kappa  \in G_R \left( {\nu ,\beta ,\mu } \right)$.  Then there exists $\pi  \in G_\mu  $ such that $\kappa \beta  = \beta \pi $.  Then $\alpha \rho _2 \beta  = \alpha \rho _1 \kappa \beta  = \alpha \rho _1 \beta \pi  \in G_\lambda  \left( {\alpha \rho _1 \beta } \right)G_\mu  $.
  
b)  Suppose $\rho _2  = \kappa \rho _1 $ for some $\kappa  \in G_L \left( {\lambda ,\alpha ,\nu } \right)$.  Then there exists $\sigma  \in G_\lambda  $ such that $\sigma \alpha  = \alpha \kappa $.  Then $\alpha \rho _2 \beta  = \alpha \kappa \rho _1 \beta  = \sigma \alpha \rho _1 \beta  \in G_\lambda  \left( {\alpha \rho _1 \beta } \right)G_\mu  $. 
\end{proof}

It follows that for any right $G_R \left( {\nu ,\beta ,\mu } \right)$
 coset, any $D \in \,_\lambda  M_\mu  $, and any $\alpha  \in S_r $, either $\alpha \rho \beta  \in D$ for every $\rho $ in the coset or $\alpha \rho \beta  \notin D$ for any $\rho $ in the coset (and similarly for left $G_L \left( {\lambda ,\alpha ,\nu } \right)$ cosets) .  

\begin{proposition} \label{p3.1}
Let $D \in \,_\lambda  M_\mu  $.  Then
\[N\left( {D\left( {\lambda ,\alpha ,\nu } \right),D\left( {\nu ,\beta ,\mu } \right),D} \right) = a_{R,D} n_R \left( {\nu ,\beta ,\mu } \right) = a_{L,D} n_L \left( {\lambda ,\alpha ,\nu } \right)\] where $a_{R,D}  = $ the number of distinct right cosets $\rho G_R \left( {\nu ,\beta ,\mu } \right)$ in $G_\nu  $ such that $\alpha \rho \beta  \in D$, and $a_{L,D}  = $ the number of distinct left cosets $G_L \left( {\lambda ,\alpha ,\nu } \right)\rho $ in $G_\nu  $ such that $\alpha \rho \beta  \in D$.
\end{proposition}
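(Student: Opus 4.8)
The plan is to read off, with $\alpha$ and $\beta$ chosen as the representatives of the double cosets $D(\lambda,\alpha,\nu)$ and $D(\nu,\beta,\mu)$, that the left-hand side is by definition
\[N\big(D(\lambda,\alpha,\nu),D(\nu,\beta,\mu),D\big) = \#\{\rho \in G_\nu : \alpha\rho\beta \in D\},\]
and then to count the solution set $\Omega = \{\rho \in G_\nu : \alpha\rho\beta \in D\}$ in two different ways: once by grouping its elements into right cosets of $G_R(\nu,\beta,\mu)$, and once by grouping them into left cosets of $G_L(\lambda,\alpha,\nu)$. Each count produces one of the two claimed factorizations of the common number $\#\Omega$.

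First I would invoke Lemma~\ref{l3.1}(a) together with the remark immediately following it: the property ``$\alpha\rho\beta \in D$'' is constant on each right coset $\rho G_R(\nu,\beta,\mu)$ of $G_\nu$. Hence $\Omega$ is a disjoint union of such right cosets, namely exactly those $a_{R,D}$ cosets on which the property holds, which is precisely the definition of $a_{R,D}$. Since $G_R(\nu,\beta,\mu)$ is a subgroup of $G_\nu$ of order $n_R(\nu,\beta,\mu)$, every one of its right cosets has cardinality $n_R(\nu,\beta,\mu)$, and distinct cosets are disjoint; summing over them gives $\#\Omega = a_{R,D}\, n_R(\nu,\beta,\mu)$.

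For the second equality I would run the identical argument with left and right interchanged, using Lemma~\ref{l3.1}(b): the property ``$\alpha\rho\beta \in D$'' is also constant on each left coset $G_L(\lambda,\alpha,\nu)\rho$ of $G_\nu$, so $\Omega$ is equally a disjoint union of $a_{L,D}$ such left cosets, each of cardinality $n_L(\lambda,\alpha,\nu)$, whence $\#\Omega = a_{L,D}\, n_L(\lambda,\alpha,\nu)$. Combining the two evaluations of $\#\Omega$ with the defining identity for $N$ yields all three expressions at once.

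There is essentially no deep obstacle here, since Lemma~\ref{l3.1} does the real work; the only points demanding care are bookkeeping ones. I would need to confirm that $N$ is genuinely independent of the chosen representatives (already asserted in the text), so that evaluating it with the specific representatives $\alpha,\beta$ is legitimate, and I would need to keep the two coset conventions straight — that $G_R(\nu,\beta,\mu)\subseteq G_\nu$ appears on the right of $G_\nu$ while $G_L(\lambda,\alpha,\nu)\subseteq G_\nu$ appears on the left — so that the partitions of $\Omega$ into cosets of equal size, and hence the two factorizations of $\#\Omega$, are correctly aligned with the definitions of $a_{R,D}$ and $a_{L,D}$.
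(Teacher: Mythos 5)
Your proof is correct and follows essentially the same route as the paper: both evaluate $N$ as $\#\{\rho\in G_\nu:\alpha\rho\beta\in D\}$ and then count that set by partitioning it into right cosets of $G_R(\nu,\beta,\mu)$ (respectively left cosets of $G_L(\lambda,\alpha,\nu)$), each of the appropriate order. The only difference is that you spell out explicitly, via Lemma~\ref{l3.1} and the remark following it, why the solution set is a union of whole cosets, a point the paper leaves implicit.
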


\begin{proof}
  All right cosets $\rho G_R \left( {\nu ,\beta ,\mu } \right)$
 have $o\left( {G_R (\nu ,\beta ,\mu )} \right) = n_R \left( {\nu ,\beta ,\mu } \right)$ elements.  So \[N\left( {D\left( {\lambda ,\alpha ,\nu } \right),D\left( {\nu ,\beta ,\mu } \right),D} \right) = \# \left\{ {\rho  \in G_\nu  :\alpha \rho \beta  \in D} \right\} = a_{R,D} n_R \left( {\nu ,\beta ,\mu } \right).\]  Similarly, all left cosets $G_L \left( {\lambda ,\alpha ,\nu } \right)\rho $ have $n_L \left( {\lambda ,\alpha ,\nu } \right)$ elements, so \[N\left( {D\left( {\lambda ,\alpha ,\nu } \right),D\left( {\nu ,\beta ,\mu } \right),D} \right) = \# \left\{ {\rho  \in G_\nu  :\alpha \rho \beta  \in D} \right\} = a_{L,D} n_L \left( {\lambda ,\alpha ,\nu } \right) .\]
\end{proof}

Substituting into the multiplication rules gives the following
\\

\textbf{Alternative forms of multiplication rules:}
\[f\left( {\lambda ,\alpha ,\nu } \right) * _R f\left( {\nu ,\beta ,\mu } \right) = \sum\limits_{D \in \,_\lambda  M_\mu  } {\frac{{n_R (D)\,\,a_{R,D} }}{{n_R \left( {\lambda ,\alpha ,\nu } \right)}}f\left( {\lambda ,D,\mu } \right)} \]

\[f\left( {\lambda ,\alpha ,\nu } \right) * _L f\left( {\nu ,\beta ,\mu } \right) = \sum\limits_{D \in \,_\lambda  M_\mu  } {\frac{{n_L (D)\,\,a_{L,D} }}{{n_L \left( {\nu ,\beta ,\mu } \right)}}f\left( {\lambda ,D,\mu } \right)} .\]

Some special cases of the multiplication rules will also be useful.
\\[12pt]
\textbf{Case 1:}  $\nu _i  =
\begin{cases}
1,  &\text{for $1 \leqslant i \leqslant r$,}\\
0,  &\text{for $i > r$.}
\end{cases}$
\\[12pt]
Then $G_\nu   = \left\{ {id} \right\}$ and $n_R \left( {\nu ,\beta ,\mu } \right) = 1 = n_L \left( {\lambda ,\alpha ,\nu } \right)$ for any $\lambda ,\mu  \in \Lambda \left( {r,n} \right)$ and $\alpha ,\beta  \in S_r $.  Also
\[N\left( {D\left( {\lambda ,\alpha ,\nu } \right),D\left( {\nu ,\beta ,\mu } \right),D} \right) =
    \begin{cases}
    1,  &\text{if $D = G_\lambda  \alpha \beta G_\mu$;}\\
    0,  &\text{otherwise.}
    \end{cases}    \]
Then substituting in the multiplication rule gives
	\[f\left( {\lambda ,\alpha ,\nu } \right) * _R f\left( {\nu ,\beta ,\mu } \right) = \frac{{n_R \left( {D\left( {\lambda ,\alpha \beta ,\mu } \right)} \right)}}
{{n_R \left( {D\left( {\lambda ,\alpha ,\nu } \right)} \right)}}f\left( {\lambda ,\alpha \beta ,\mu } \right)\]
For $\alpha  = 1$ (the identity in $S_r $), $n_R \left( {\lambda ,1,\nu } \right) = 1$ giving	
\[f\left( {\lambda ,1,\nu } \right) * _R f\left( {\nu ,\beta ,\mu } \right) = n_R \left( {D\left( {\lambda ,\beta ,\mu } \right)} \right)f\left( {\lambda ,\beta ,\mu } \right)\]
If in addition $\lambda  = \nu $, then writing $1_\nu   = f\left( {\nu ,1,\nu } \right)$ gives 
\[1_\nu   * _R f\left( {\nu ,\beta ,\mu } \right) = f\left( {\nu ,\beta ,\mu } \right).\]
There are similar rules for $ * _L $:
\[f\left( {\lambda ,\alpha ,\nu } \right) * _L f\left( {\nu ,\beta ,\mu } \right) = \frac{{n_L \left( {D\left( {\lambda ,\alpha \beta ,\mu } \right)} \right)}}{{n_L \left( {D\left( {\lambda ,\alpha ,\nu } \right)} \right)}}f\left( {\lambda ,\alpha \beta ,\mu } \right)\]
\[f\left( {\lambda ,\alpha ,\nu } \right) * _L f\left( {\nu ,1,\mu } \right) = n_L \left( {D\left( {\lambda ,\alpha ,\mu } \right)} \right)f\left( {\lambda ,\alpha ,\mu } \right)\]
\[f\left( {\lambda ,\alpha ,\nu } \right) * _L 1_\nu   = f\left( {\lambda ,\alpha ,\nu } \right).\]
\\[12pt]
\textbf{Case 2:}  $\alpha  = 1$ (the identity in $S_r $) and $G_\nu   \subseteq G_\lambda  $.
\\[12pt]
Then for any $\rho \in G_\nu$, we have $\alpha \rho \beta  = \rho \beta  \in G_\lambda  \beta G_\mu  $, so
  \[N\left( {D\left( {\lambda ,1,\nu } \right),D\left( {\nu ,\beta ,\mu } \right),D} \right) =
  \begin{cases}
  o\left( {G_\nu  } \right),   &\text{if $D = G_\lambda  \beta G_\mu$;}\\
   0,   &\text{otherwise.}
  \end{cases}\]
Also $G_R \left( {\lambda ,1,\nu } \right) = G_\nu   = G_L \left( {\lambda ,1,\nu } \right)$, so 
\[n_R \left( {D\left( {\lambda ,1,\nu } \right)} \right) = o\left( {G_\nu  } \right) = n_L \left( {D\left( {\lambda ,1,\nu } \right)} \right).\]
Then substitution in the multiplication rule gives:
\[f\left( {\lambda ,1,\nu } \right) * _R f\left( {\nu ,\beta ,\mu } \right) = \frac{{n_R \left( {D\left( {\lambda ,\beta ,\mu } \right)} \right)}}
{{n_R \left( {D\left( {\nu ,\beta ,\mu } \right)} \right)}}f\left( {\lambda ,\beta ,\mu } \right).\]
 In particular, if $\lambda  = \nu $ and $1_\nu   = f\left( {\nu ,1,\nu } \right)$ then
 \[1_\nu   * _R f\left( {\nu ,\beta ,\mu } \right) = f\left( {\nu ,\beta ,\mu } \right).\]
Similarly, 
\[f\left( {\lambda ,1,\nu } \right) * _L f\left( {\nu ,\beta ,\mu } \right) = \frac{{n_L \left( {D\left( {\lambda ,\beta ,\mu } \right)} \right)}}
{{n_L \left( {D\left( {\nu ,\beta ,\mu } \right)} \right)}}f\left( {\lambda ,\beta ,\mu } \right)\]
\[1_\nu   * _L f\left( {\nu ,\beta ,\mu } \right) = f\left( {\nu ,\beta ,\mu } \right).\]
\\[12pt]
\textbf{Case 3:}  $\beta  = 1$ and $G_\nu   \subseteq G_\mu  $.
\\[12pt]
  Then for any $\rho \in G_\nu$, we have $\alpha \rho \beta  = \alpha \rho  \in G_\lambda  \alpha G_\mu  $, so
  \[N\left( {D\left( {\lambda ,\alpha ,\nu } \right),D\left( {\nu ,1,\mu } \right),D} \right) =
  \begin{cases}
  o\left( {G_\nu  } \right),   &\text{if $D = G_\lambda  \alpha G_\mu$;}\\
  0    &\text{otherwise.}
  \end{cases}
  \]
Also $G_R \left( {\nu ,1,\mu } \right) = G_\nu   = G_L \left( {\nu ,1,\mu } \right)$, so 
\[n_R \left( {D\left( {\nu ,1,\mu } \right)} \right) = o\left( {G_\nu  } \right) = n_L \left( {D\left( {\nu ,1,\mu } \right)} \right).\]
Then substitution in the multiplication rule gives:
\[f\left( {\lambda ,\alpha ,\nu } \right) * _R f\left( {\nu ,1,\mu } \right) = \frac{{n_R \left( {D\left( {\lambda ,\alpha ,\mu } \right)} \right)}}{{n_R \left( {D\left( {\lambda ,\alpha ,\nu } \right)} \right)}}f\left( {\lambda ,\alpha ,\mu } \right)\]
In particular, if $\mu  = \nu $ and $1_\nu   = f\left( {\nu ,1,\nu } \right)$ then
\[f\left( {\lambda ,\alpha ,\nu } \right) * _R 1_\nu   = f\left( {\lambda ,\alpha ,\nu } \right).\]
Similarly, 
\[f\left( {\lambda ,\alpha ,\nu } \right) * _L f\left( {\nu ,1,\mu } \right) = \frac{{n_L \left( {D\left( {\lambda ,\alpha ,\mu } \right)} \right)}}{{n_L \left( {D\left( {\lambda ,\alpha ,\nu } \right)} \right)}}f\left( {\lambda ,\alpha ,\mu } \right)\]
\[f\left( {\lambda ,\alpha ,\nu } \right) * _L 1_\nu   = f\left( {\lambda ,\alpha ,\nu } \right).\]
\\[12pt]
\textbf{Case 4:} $\alpha  = \beta  = 1\,,\,\lambda  = \mu \,,\,G_\nu   \subseteq G_\lambda   = G_\mu $. 
\\[12pt]
As a special case of either case 2 or case 3, 
\[f\left( {\lambda ,1,\nu } \right) * _R f\left( {\nu ,1,\lambda } \right) = \frac{{o\left( {G_\lambda  } \right)}}
{{o\left( {G_\nu  } \right)}}f\left( {\lambda ,1,\lambda } \right) = f\left( {\lambda ,1,\nu } \right) * _L f\left( {\nu ,1,\lambda } \right).\]
In particular, when $\nu  = \lambda  = \mu $,
\[1_\nu   * _R 1_\nu   = 1_\nu   = 1_\nu   * _L 1_\nu .\]
\\[12pt]
So in both $A_R $ and $A_L $, each $1_\nu   = f\left( {\nu ,1,\nu } \right)$ is an idempotent.   In fact for either algebra, cases 2 and 3 can be used to check that  $\left\{ {1_\nu  :\nu  \in \Lambda \left( {r,n} \right)} \right\}$ is a family of orthogonal idempotents with $\mathop  \oplus \limits_{\nu  \in \Lambda \left( {r,n} \right)} 1_\nu   = 1$ (the identity).

\section{The left and right generalized Schur algebras}

     Let $R$ be a commutative domain with unit $1$ and let $\psi :\mathbb{Z} \to R$ be the natural ring homomophism such that $\psi \left( 1 \right) = 1$.  Regard $R$ as a right $\mathbb{Z}$-module via $\psi $ and form the tensor products $LGS_R  = R\mathop  \otimes_\mathbb{Z} A_L $ and  $RGS_R  = R\mathop  \otimes_\mathbb{Z} A_R$ which we call the left and right generalized Schur algebras over $R$ (for the monoid $S_r $).  (As in \cite{May2}, for $S_r  = \mathfrak{S}_r $, $LGS_R $ is isomorphic to the classical Schur algebra over $R$, $S_R \left( {r,n} \right)$, while $RGS_R $ is isomorphic to the opposite algebra $S_R \left( {r,n} \right)^{op} $.)  The $R$ algebras $LGS_R \,,\,RGS_R $ are both free as $R$ modules with bases $\left\{ {1 \otimes f\left( {\lambda ,D,\mu } \right):\lambda ,\mu  \in \Lambda \left( {r,n} \right),D \in \,_\lambda  M_\mu  } \right\}$.  We will often denote a basis element $1 \otimes f\left( {\lambda ,D,\mu } \right)$ by just $f\left( {\lambda ,D,\mu } \right)$.  Then all the multiplication rules given in section 3 apply if we identify each coefficient $a \in \mathbb{Z}$ occurring with its image $\psi \left( a \right) \in R$.
      As shown in \cite{May2}, the isomorphism $A \cong \mathbb{C} \otimes A_L  \equiv LGS_\mathbb{C} $ is given by matching basis elements $\frac{{X\left( {\lambda ,D,\mu } \right)}}
{{n_L \left( D \right)}} \leftrightarrow 1 \otimes f\left( {\lambda ,D,\mu } \right)$, where $n_L \left( D \right)$ is the number of elements in any left $G_\lambda  $ coset $C \subseteq D \in \,_\lambda  M_\mu  $.  Similarly, the isomorphism $A \cong \mathbb{C} \otimes A_R  \equiv RGS_\mathbb{C} $ is given by matching basis elements $\frac{{X\left( {\lambda ,D,\mu } \right)}}{{n_R \left( D \right)}} \leftrightarrow 1 \otimes f\left( {\lambda ,D,\mu } \right)$, where $n_R \left( D \right)$
 is the number of elements in any right $G_\mu  $ coset $C \subseteq D \in \,_\lambda  M_\mu  $.

\section{Filtrations of generalized Schur algebras and irreducible representations}

Let $\Lambda  = \Lambda (r,n)$ be the set of all compositions of $r$ into $n$ parts; $\Lambda ^ +   = \Lambda ^ +  (r) \subseteq \Lambda $, the set of all partitions of $r$.  For $\lambda  \in \Lambda $ and $0 \leqslant k \leqslant r$, put $L(\lambda ,k) = \# \left\{ {i:\lambda _i  = k} \right\}$ (= number of rows of length $k$ in $\lambda $ ).   Define an equivalence relation on $\Lambda $ by $\lambda _1  \sim \lambda _2  \Leftrightarrow \forall i,L\left( {\lambda _1 ,i} \right) = L\left( {\lambda _2 ,i} \right)$.  Notice that for each $\lambda  \in \Lambda $ there is exactly one partition $\lambda ^ +   \in \Lambda ^ +  $ with $\lambda  \sim \lambda ^ +  $.   Let $ < $ be the ``lexicographic'' order on $\Lambda ^ +  $:   For $\lambda ,\mu  \in \Lambda ^ +  $, $\lambda  < \mu  \Leftrightarrow \exists k{\text{ s}}{\text{.t}}{\text{. }}\lambda _i  = \mu _i {\text{ for }}i < k$ while $\lambda _k  < \mu _k $.  Then $ < $  gives a total ordering of the partitions or of the equivalence classes of compositions.  It extends to a partial order on $\Lambda $:  for $\lambda ,\mu  \in \Lambda \,,\,\,\lambda  \leqslant \mu  \Leftrightarrow \lambda  \sim \mu {\text{ or }}\lambda ^ +   < \mu ^ +  $ where $\lambda ^ +  ,\mu ^ +   \in \Lambda ^ +  $ are the partitions corresponding to $\lambda ,\mu $.  (This partial order differs slightly from that used in \cite{May2} .)  The smallest partition, $\bar \nu  = 1^{(r)} $, has $\,\,\bar \nu _i  = 1$ for all i.  The largest partition, $\lambda  = (r)$, has $\lambda _1  = r,\lambda _i  = 0,i > 1$.

Write $B$ for either of the $R$-algebras $LGS_R $ or $RGS_R $.  As mentioned above, $\left\{ {1_\lambda   = f\left( {\lambda ,1,\lambda } \right):\lambda  \in \Lambda } \right\}$ is a set of orthogonal idempotents in $B$ with $\mathop  \oplus \limits_{\lambda  \in \Lambda } 1_\lambda   = 1$ (the identity in $B$).  For each partition $\lambda  \in \Lambda ^ +  $ define $e_\lambda   = \mathop  \oplus \limits_{\mu  \in \Lambda ,\mu  < \lambda } 1_\mu  $ and $\bar e_\lambda   = e_\lambda   \oplus \left( {\mathop  \oplus \limits_{\mu  \in \Lambda ,\mu  \sim \lambda } 1_\mu  } \right)$.  (For the smallest partition $\bar \nu  = 1^{(r)} $ define $e_{\bar \nu }  = 0$.)   Note the following results:  each $e_\lambda  {\text{ and }}\bar e_\lambda  $ is an idempotent; for the largest partition, $(r)$, we have $\bar e_{(r)}  = 1$ (the identity in $B$);  $\lambda _1 ,\lambda _2  \in \Lambda ^ +   \Rightarrow \bar e_{\lambda _1 } \bar e_{\lambda _2 }  = \bar e_{\min (\lambda _1 ,\lambda _2 )} {\text{ and }}e_{\lambda _1 } e_{\lambda _2 }  = e_{\min (\lambda _1 ,\lambda _2 )} $ ;  $\bar e_\lambda  e_\lambda   = e_\lambda  \bar e_\lambda   = e_\lambda  $.

Obtain a filtration of the algebra $B$ by defining $B^\lambda   = \bar e_\lambda  B\bar e_\lambda  ,\lambda  \in \Lambda ^ +  $.  Then $B^{(r)}  = B$ and $\lambda _1  < \lambda _2  \Rightarrow B^{\lambda _1 }  \subseteq B^{\lambda _2 } $.  Each $B^\lambda  $ is an $R$-algebra with unit $\bar e_\lambda  $.  Next define $K^\lambda   = \left( {Be_\lambda  B} \right) \cap B^\lambda   = B^\lambda  e_\lambda  B^\lambda  $.  Then $K^\lambda  $ is the two-sided ideal in $B^\lambda  $ generated by $e_\lambda  $.   
Finally, define $Q^\lambda   = B^\lambda  /K^\lambda  $.  Then $Q^\lambda  $ is an $R$-algebra with unit $\bar e_\lambda  \,\bmod \,K^\lambda  $ .

By standard idempotent theory, if $I$ is an irreducible (left) $B$-module, then either $\bar e_\lambda  I = 0$ or $\bar e_\lambda  I$ is an irreducible $B^\lambda  \left( { = \bar e_\lambda  B\bar e_\lambda  } \right)$-module.  Define an irreducible $B$-module $I$ to be at level $\lambda $ if $\bar e_\lambda  I \ne 0{\text{ and }}e_\lambda  I = 0$.  Then isomorphism classes of irreducible $B$-modules at levels $ \leqslant \lambda $ correspond to those with $\bar e_\lambda  I \ne 0$, which in turn correspond to isomorphism classes of irreducible $B^\lambda  $-modules.

An irreducible $Q^\lambda  $-module corresponds to an irreducible $B^\lambda  $-module on which $K^\lambda  $ acts trivially, and thus to an irreducible $B$-module $I$ such that $\bar e_\lambda  I \ne 0{\text{ but }}K^\lambda  \bar e_\lambda  I = 0$.  But then $K^\lambda  \bar e_\lambda  I = 0 \Rightarrow e_\lambda  \bar e_\lambda  I = 0 \Rightarrow e_\lambda  I = 0 \Rightarrow I$  is at level $\lambda $.  Since any irreducible $B$-module lies at exactly one level, it corresponds to an irreducible $Q^\lambda  $-module for exactly one $\lambda  \in \Lambda ^ +  $.  
Thus a classification of the irreducible $Q^\lambda  $-modules for all $\lambda  \in \Lambda ^ +  $ will give a classification of all irreducible $B$-modules.

We will make one further reduction.  Let $\bar B^\lambda   = \left( {1_\lambda   + e_\lambda  } \right)B\left( {1_\lambda   + e_\lambda  } \right) \subseteq B^\lambda  $.  This is an $R$-algebra with unit $1_\lambda   + e_\lambda  $.  Let $\bar K^\lambda   = K^\lambda   \cap \bar B^\lambda   = \bar B^\lambda  e_\lambda  \bar B^\lambda  $, the two-sided ideal in $\bar B^\lambda  $ generated by $e_\lambda  $, and define $C^\lambda   = \bar B^\lambda  /\bar K^\lambda  $.  $C^\lambda  $ is an $R$-algebra with unit $1_\lambda  \bmod \bar K^\lambda  $ and can be identified with a subalgebra of $Q^\lambda  $:  $C^\lambda   = \left( {1_\lambda  \bmod K^\lambda  } \right)Q^\lambda  \left( {1_\lambda  \bmod K^\lambda  } \right)$.  We will see that irreducible $Q^\lambda  $-modules correspond to irreducible $C^\lambda  $-modules.

Notice that if $\mu ,\lambda  \in \Lambda $ and $\mu  \sim \lambda $, then there exists $\alpha  \in \mathfrak{S}_r  \subseteq S_r $ such that $G_\mu   = \alpha G_\lambda  \alpha ^{ - 1} $.

\begin{lemma} \label{l5.1}
  If $\mu ,\lambda  \in \Lambda $, $\mu  \sim \lambda $, and $G_\mu   = \alpha G_\lambda  \alpha ^{ - 1} $, then for $X = L{\text{ or }}R$,  $1_\mu   = f\left( {\mu ,\alpha ,\lambda } \right) * _X 1_\lambda   * _X f\left( {\lambda ,\alpha ^{ - 1} ,\mu } \right)$.
\end{lemma}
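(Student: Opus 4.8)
The plan is to combine associativity with the identity properties from Case~3 and then apply the alternative form of the multiplication rules. First I would dispose of the middle factor $1_\lambda$: since $1_\lambda = f(\lambda,1,\lambda)$ and the middle index of $f(\mu,\alpha,\lambda)$ is $\lambda$, the special-case rule $f(\lambda',\alpha,\nu) *_X 1_\nu = f(\lambda',\alpha,\nu)$ (with $\lambda' = \mu$, $\nu = \lambda$) gives $f(\mu,\alpha,\lambda) *_X 1_\lambda = f(\mu,\alpha,\lambda)$ for both $X = R$ and $X = L$. Hence the whole expression collapses to $f(\mu,\alpha,\lambda) *_X f(\lambda,\alpha^{-1},\mu)$, and it suffices to show this product equals $1_\mu = f(\mu,1,\mu)$.

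Next I would apply the alternative form of the multiplication rule, with $\lambda' = \mu$, $\alpha' = \alpha$, $\nu = \lambda$, $\beta' = \alpha^{-1}$, $\mu' = \mu$, so that the sum runs over double cosets $D \in {}_{\mu}M_{\mu}$. The crucial observation is that $\alpha \in \mathfrak{S}_r$ is an invertible permutation and $G_\mu = \alpha G_\lambda \alpha^{-1}$, so conjugation by $\alpha$ carries $G_\lambda$ bijectively onto $G_\mu$. Consequently $\alpha \rho \alpha^{-1} \in G_\mu$ for every $\rho \in G_\lambda$, which means the element $\alpha\rho\beta' = \alpha\rho\alpha^{-1}$ always lands in the single double coset $D_0 = G_\mu = D(\mu,1,\mu)$. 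Therefore only the term $D = D_0$ survives in the sum, and $f(\mu,D_0,\mu) = f(\mu,1,\mu) = 1_\mu$, so the entire content of the lemma reduces to checking that the coefficient of this one surviving term is $1$.

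The same conjugation identity then pins down every stabilizer occurring in the coefficient. Since the defining condition for $G_R(\mu,\alpha,\lambda)$ reads $\alpha^{-1}\sigma\alpha \in G_\lambda$ and this holds for all $\sigma \in G_\mu$, we get $G_R(\mu,\alpha,\lambda) = G_\mu$; likewise $G_R(\lambda,\alpha^{-1},\mu) = G_\lambda$, and for the $*_L$ version $G_L(\mu,\alpha,\lambda) = G_\lambda$ and $G_L(\lambda,\alpha^{-1},\mu) = G_\mu$. Hence for $X = R$ we have $n_R(\mu,\alpha,\lambda) = o(G_\mu)$, $n_R(\lambda,\alpha^{-1},\mu) = o(G_\lambda)$, and $n_R(D_0) = n_R(\mu,1,\mu) = o(G_\mu)$, while the count $a_{R,D_0}$ equals the number of right $G_R(\lambda,\alpha^{-1},\mu)$-cosets in $G_\lambda$, namely $o(G_\lambda)/o(G_\lambda) = 1$. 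Substituting into $\tfrac{n_R(D_0)\,a_{R,D_0}}{n_R(\mu,\alpha,\lambda)}$ yields $\tfrac{o(G_\mu)\cdot 1}{o(G_\mu)} = 1$. The $*_L$ computation is entirely parallel: the denominator is now $n_L(\lambda,\alpha^{-1},\mu) = o(G_\mu)$, one has $n_L(D_0) = o(G_\mu)$ and $a_{L,D_0} = 1$, so $\tfrac{n_L(D_0)\,a_{L,D_0}}{n_L(\lambda,\alpha^{-1},\mu)} = 1$ as well.

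I do not expect a genuinely hard step here: the conceptual core is simply that conjugation by the permutation $\alpha$ is an isomorphism $G_\lambda \to G_\mu$, which simultaneously forces every relevant stabilizer to be the full Young subgroup and forces the output double coset to be the trivial one $D_0$. The only real risk is bookkeeping — a misapplied convention in the $G_R$ versus $G_L$ definitions or a slip in the direction of conjugation — so I would verify each defining condition ($\alpha\sigma\alpha^{-1} \in G_\mu$ against $\alpha^{-1}\sigma\alpha \in G_\lambda$) explicitly when identifying the stabilizers.
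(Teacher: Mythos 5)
Your proof is correct and follows essentially the same route as the paper's: the key fact in both is that conjugation by $\alpha$ carries $G_\lambda$ onto $G_\mu$, which forces the entire product into the single double coset $D(\mu,1,\mu)$ and identifies every relevant stabilizer as a full Young subgroup. The only cosmetic difference is that you substitute into the alternative form of the multiplication rule (via the coset counts $a_{X,D}$) while the paper uses the primary form (via $N(D_\alpha,D_\beta,D)=o(G_\lambda)$); by Proposition~\ref{p3.1} these yield the same coefficient $1$.
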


\begin{proof}
  For any $\rho  \in G_\lambda  $ we have $\alpha \rho \alpha ^{ - 1}  \in G_\mu   = G_\mu  1G_\mu  $, so
\[N\left( {D\left( {\mu ,\alpha ,\lambda } \right),D\left( {\lambda ,\alpha ^{ - 1} ,\mu } \right),D} \right) =
\begin{cases}
o\left({G_\lambda}\right),    &\text{if $D=D\left({\mu,1,\mu}\right)$,}\\
0,       &\text{otherwise.}
\end{cases} \]  
For any $\sigma  \in G_\mu  $, let $\pi  = \alpha ^{ - 1} \sigma \alpha  \in G_\lambda  $.  Then $\sigma \alpha  = \alpha \pi $.  Then $G_R \left( {\mu ,\alpha ,\lambda } \right) = G_\mu  $ and $n_R \left( {\mu ,\alpha ,\lambda } \right) = o\left( {G_\mu  } \right)$.  Reversing the roles of $\lambda ,\mu $ gives $n_R \left( {\lambda ,\alpha ^{ - 1} ,\mu } \right) = o\left( {G_\lambda  } \right)$.  Also $n_R \left( {\mu ,1,\mu } \right) = o\left( {G_\mu  } \right)$.  
Substituting into the multiplication rule then gives
\[f\left( {\mu ,\alpha ,\lambda } \right) * _R 1_\lambda   * _R f\left( {\lambda ,\alpha ^{ - 1} ,\mu } \right) = f\left( {\mu ,\alpha ,\lambda } \right) * _R f\left( {\lambda ,\alpha ^{ - 1} ,\mu } \right)  \]

\[= \frac{{o\left( {G_\mu  } \right)o\left( {G_\lambda  } \right)}}
{{o\left( {G_\mu  } \right)o\left( {G_\lambda  } \right)}}f\left( {\mu ,1,\mu } \right) = 1_\mu.\]
By a similar argument, $n_L \left( {\mu ,\alpha ,\lambda } \right) = o\left( {G_\lambda  } \right)$, $n_L \left( {\lambda ,\alpha ^{ - 1} ,\mu } \right) = o\left( {G_\mu  } \right)$, and $n_L \left( {\mu ,1,\mu } \right) = o\left( {G_\mu  } \right)$.  The multiplication rule again gives 
\[f\left( {\mu ,\alpha ,\lambda } \right) * _L 1_\lambda   * _L f\left( {\lambda ,\alpha ^{ - 1} ,\mu } \right) = f\left( {\mu ,\alpha ,\lambda } \right) * _L f\left( {\lambda ,\alpha ^{ - 1} ,\mu } \right) \]

\[= \frac{{o\left( {G_\mu  } \right)o\left( {G_\lambda  } \right)}}
{{o\left( {G_\mu  } \right)o\left( {G_\lambda  } \right)}}f\left( {\mu ,1,\mu } \right) = 1_\mu  .\]

\end{proof}

\begin{corollary} \label{c5.1}
$1_\lambda  \bmod K^\lambda  $ is a ``full idempotent'' in $Q^\lambda  $, that is, the two sided ideal generated by $1_\lambda  \bmod K^\lambda  $ in $Q^\lambda  $ is all of $Q^\lambda  $.   
\end{corollary}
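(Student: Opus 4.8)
The plan is to show that the identity element $\bar e_\lambda \bmod K^\lambda$ of $Q^\lambda$ lies in the two-sided ideal generated by $1_\lambda \bmod K^\lambda$; since any two-sided ideal containing the unit is the whole algebra, this suffices.

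First I would record that $e_\lambda$ reduces to $0$ in $Q^\lambda$. Since $\bar e_\lambda e_\lambda = e_\lambda \bar e_\lambda = e_\lambda$ (noted just before the lemma), the element $e_\lambda = \bar e_\lambda e_\lambda \bar e_\lambda$ lies in $K^\lambda = B^\lambda e_\lambda B^\lambda$, so its image in $Q^\lambda$ vanishes. Consequently, using $\bar e_\lambda = e_\lambda \oplus \left( \bigoplus_{\mu \sim \lambda} 1_\mu \right)$, the unit of $Q^\lambda$ satisfies $\bar e_\lambda \equiv \sum_{\mu \sim \lambda} 1_\mu \pmod{K^\lambda}$. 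Thus it is enough to prove that each $1_\mu \bmod K^\lambda$, for $\mu \sim \lambda$, lies in the ideal generated by $1_\lambda \bmod K^\lambda$.

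Next, fix $\mu \sim \lambda$ and choose $\alpha \in \mathfrak{S}_r$ with $G_\mu = \alpha G_\lambda \alpha^{-1}$, as guaranteed by the remark preceding Lemma \ref{l5.1}. That lemma gives $1_\mu = f(\mu,\alpha,\lambda) *_X 1_\lambda *_X f(\lambda,\alpha^{-1},\mu)$ in $B$. Before reading this identity inside $Q^\lambda$, I must check that the two outer factors lie in $B^\lambda = \bar e_\lambda B \bar e_\lambda$. This follows from the Case 2 and Case 3 identities: $1_\mu *_X f(\mu,\alpha,\lambda) = f(\mu,\alpha,\lambda)$ and $f(\mu,\alpha,\lambda) *_X 1_\lambda = f(\mu,\alpha,\lambda)$, so $f(\mu,\alpha,\lambda) = 1_\mu f(\mu,\alpha,\lambda) 1_\lambda$. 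Since $\mu \sim \lambda$ and $\lambda \sim \lambda$, both $1_\mu$ and $1_\lambda$ are summands of $\bar e_\lambda$, whence $\bar e_\lambda f(\mu,\alpha,\lambda) \bar e_\lambda = f(\mu,\alpha,\lambda)$ and $f(\mu,\alpha,\lambda) \in B^\lambda$; the same argument applies to $f(\lambda,\alpha^{-1},\mu)$.

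Finally, reducing the identity of Lemma \ref{l5.1} modulo $K^\lambda$ exhibits $1_\mu \bmod K^\lambda$ as a product of an element of $B^\lambda$, the element $1_\lambda \bmod K^\lambda$, and another element of $B^\lambda$. Hence $1_\mu \bmod K^\lambda$ lies in the two-sided ideal generated by $1_\lambda \bmod K^\lambda$ for every $\mu \sim \lambda$, so the unit $\sum_{\mu \sim \lambda} 1_\mu \bmod K^\lambda$ does too, and the ideal is all of $Q^\lambda$. I expect no serious obstacle here: the whole weight of the argument is carried by Lemma \ref{l5.1}, and the only point requiring care is the bookkeeping verifying that the conjugating basis elements genuinely lie in $B^\lambda$, so that the factorization descends to $Q^\lambda$.
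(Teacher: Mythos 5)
Your proposal is correct and follows essentially the same route as the paper: reduce $\bar e_\lambda$ modulo $K^\lambda$ to $\sum_{\mu\sim\lambda}1_\mu$ and then invoke Lemma \ref{l5.1} to place each $1_\mu$ in the ideal generated by $1_\lambda \bmod K^\lambda$. The only difference is that you spell out the bookkeeping (that $e_\lambda\in K^\lambda$ and that the conjugating elements $f(\mu,\alpha,\lambda)$, $f(\lambda,\alpha^{-1},\mu)$ lie in $B^\lambda=\bar e_\lambda B\bar e_\lambda$) which the paper leaves implicit.
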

\begin{proof}

  Modulo $K^\lambda  $ , $\bar e_\lambda   = \sum\limits_{\mu  \in \Lambda ,\mu  \sim \lambda } {1_\mu  } $.   By the lemma, the two-sided ideal generated by $1_\lambda  $ contains each $1_\mu  $, $\mu  \sim \lambda $ .  So the two sided ideal generated by $1_\lambda  \bmod K^\lambda  $ contains the identity $\bar e_\lambda  \bmod K^\lambda  $
in $Q^\lambda  $ and hence all of $Q^\lambda  $.  
\end{proof}

\begin{proposition} \label{p5.1}

  Isomorphism classes of irreducible $C^\lambda  $-modules correspond one to one with isomorphism classes of irreducible $Q^\lambda  $-modules, and hence to isomorphism classes of irreducible $B$-modules at level $\lambda$.
\end{proposition}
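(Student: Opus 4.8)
The plan is to recognize this statement as an instance of the standard theory of full idempotents, applied to the algebra $Q^\lambda$ and its idempotent $\bar e := 1_\lambda \bmod K^\lambda$. By the identification recorded just before the statement we have $C^\lambda = \bar e\, Q^\lambda \bar e$, and by Corollary \ref{c5.1} the idempotent $\bar e$ is \emph{full}, i.e.\ $Q^\lambda \bar e\, Q^\lambda = Q^\lambda$. The correspondence will be realized by the ``Schur functor'' $M \mapsto \bar e M$ sending a (left) $Q^\lambda$-module to the $C^\lambda$-module $\bar e M$, and I would analyze its behaviour on irreducibles directly.

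First I would show that for every irreducible $Q^\lambda$-module $M$ the space $\bar e M$ is an irreducible $C^\lambda$-module. For non-vanishing: if $\bar e M = 0$ then, using that $M$ is unital and irreducible so that $Q^\lambda M = M$, one computes $(Q^\lambda \bar e\, Q^\lambda) M = Q^\lambda \bar e (Q^\lambda M) = Q^\lambda \bar e M = 0$, while fullness gives $(Q^\lambda \bar e\, Q^\lambda) M = Q^\lambda M = M$; hence $M = 0$, a contradiction, and so $\bar e M \neq 0$. For irreducibility, take $0 \neq v \in \bar e M$; by irreducibility $Q^\lambda v = M$, and since $v = \bar e v$ we obtain $\bar e M = \bar e\, Q^\lambda v = \bar e\, Q^\lambda \bar e\, v = C^\lambda v$. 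Thus every nonzero vector generates $\bar e M$ over $C^\lambda$, so $\bar e M$ is irreducible.

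Next I would construct the inverse correspondence. Given an irreducible $C^\lambda$-module $N$, I would form the induced module $Q^\lambda \bar e \otimes_{C^\lambda} N$ and show it possesses a unique maximal submodule, namely the largest submodule annihilated by $\bar e$; its irreducible quotient $S$ then satisfies $\bar e S \cong N$. Conversely, for an irreducible $M$ the evaluation map $Q^\lambda \bar e \otimes_{C^\lambda} \bar e M \to M$ is surjective (its image is the nonzero submodule $Q^\lambda \bar e M = M$) and exhibits $M$ as this canonical irreducible quotient, so $\bar e M \cong \bar e M'$ forces $M \cong M'$. Together these give the asserted one-to-one correspondence between isomorphism classes of irreducible $C^\lambda$-modules and of irreducible $Q^\lambda$-modules. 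The final clause is then immediate: by the discussion preceding the statement, irreducible $Q^\lambda$-modules already correspond one-to-one with irreducible $B$-modules at level $\lambda$, so composing the two bijections yields the correspondence with irreducible $C^\lambda$-modules.

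The main obstacle I expect is the bijectivity, i.e.\ the construction and analysis of the inverse functor. The ``irreducible-or-zero'' step and the use of fullness to exclude vanishing are purely formal manipulations with the idempotent $\bar e$. The genuine work lies in verifying that $Q^\lambda \bar e \otimes_{C^\lambda} N$ has a simple top, that the canonical map $N \to \bar e\,(Q^\lambda \bar e \otimes_{C^\lambda} N)$ is an isomorphism, and that every proper submodule of the induced module is killed by $\bar e$; these facts are what make $M \mapsto \bar e M$ simultaneously injective and surjective on isomorphism classes.
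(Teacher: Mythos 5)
Your proof is correct and follows essentially the same route as the paper: the paper simply cites ``general idempotent theory'' for the correspondence between irreducible $C^\lambda = \bar e\, Q^\lambda \bar e$-modules and irreducible $Q^\lambda$-modules $I$ with $\bar e I \neq 0$, and then, exactly as you do, invokes the fullness of $\bar e = 1_\lambda \bmod K^\lambda$ from Corollary \ref{c5.1} to rule out $\bar e I = 0$. The only difference is that you unpack the standard Schur-functor details (irreducibility of $\bar e M$, the induced module and its simple top) that the paper leaves implicit.
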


\begin{proof}
  By general idempotent theory, irreducible $C^\lambda  $-modules correspond to irreducible $Q^\lambda  $-modules $I$ such that $\left( {1_\lambda  \bmod K^\lambda  } \right)I \ne 0$.  But by the corollary above, we have a full idempotent, and for a full idempotent $\left( {1_\lambda  \bmod K^\lambda  } \right)I = 0 \Rightarrow I = 0$.  So every irreducible $Q^\lambda  $-module corresponds to a $C^\lambda  $-module as claimed.  
\end{proof}

Remark:  Suppose there are $d$ compositions $\mu $ in $\Lambda (r)$ with $\mu  \sim \lambda  \in \Lambda ^ +  (r)$.  It can be shown that $Q^\lambda  $ is isomorphic as an $R$-algebra to the algebra of $d$ by $d$ matrices with entries in $C^\lambda  $.  This leads to an alternative verification of Proposition \ref{p5.1}.

    In the remainder of this paper we will classify the irreducible $B$-modules (in many cases) by classifying the irreducible $C^\lambda$-modules for all partitions $\lambda $.  It will be useful to rewrite $C^\lambda  $ slightly:  since
    \[\bar B^\lambda   = \left( {1_\lambda   + e_\lambda  } \right)B\left( {1_\lambda   + e_\lambda  } \right) = 1_\lambda  B1_\lambda   + e_\lambda  B1_\lambda   + 1_\lambda  Be_\lambda   + e_\lambda  Be_\lambda  \]
     and
     \[e_\lambda  B1_\lambda   + 1_\lambda  Be_\lambda   + e_\lambda  Be_\lambda   \subseteq Be_\lambda  B \cap \bar B^\lambda   = \bar K^\lambda \]
        we have
       \[C^\lambda   = \bar B^\lambda  /\bar K^\lambda   = 1_\lambda  B1_\lambda  /\left( {1_\lambda  B1_\lambda   \cap Be_\lambda  B} \right)\]
       (where $e_\lambda = \sum\limits_{\mu \in \Lambda ,\mu  < \lambda } {1_\mu} $).

\section{Characteristic zero}

In this section we will assume $R$ is a field $k$ of characteristic zero.  We continue to write $B$ for either of the $k$-algebras $LGS_k $ or $RGS_k $ and let $B^\lambda  ,C^\lambda  $ be as in section 5.  Let $\nu  = 1^{(r)} $ be the ``smallest'' partition.

\begin{thm} \label{t6.1}
  For a field $k$ of characteristic zero,  $C^\nu   \cong k[S_r ]$ (the monoid algebra for $S_r $ over $k$) while $C^\lambda   = 0,\,\lambda  \ne \nu $.  The irreducible left $B$-modules are all at level $\nu $ and correspond to irreducible left $k[S_r ]$-modules.  (The irreducible left $k[S_r ]$-modules in turn correspond to irreducible left $k[\mathfrak{S}_i ]$-modules for various $i \leqslant r$.)
\end{thm}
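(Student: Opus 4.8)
The plan is to compute $C^\nu$ directly, to show $C^\lambda = 0$ for every other partition by exhibiting $1_\lambda$ inside the ideal $Be_\lambda B$, and then to read off the module classification from Proposition \ref{p5.1}.

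First I would treat $C^\nu$. Since $\nu = 1^{(r)}$ is the smallest partition, $e_\nu = 0$, so $C^\nu = 1_\nu B 1_\nu$ with no quotient to take. Because $G_\nu = \{id\}$, every double coset $G_\nu \alpha G_\nu$ is the singleton $\{\alpha\}$, so the basis $\{f(\nu,\alpha,\nu) : \alpha \in S_r\}$ of $1_\nu B 1_\nu$ is naturally indexed by $S_r$ itself. To identify the multiplication I would invoke Case 1, whose hypothesis on $\nu$ is exactly $\nu = 1^{(r)}$: taking $\lambda = \mu = \nu$ and noting that $n_R(D(\nu,\gamma,\nu)) = o(G_R(\nu,\gamma,\nu)) = 1$ for every $\gamma$ (as $G_R \subseteq G_\nu = \{id\}$), the rule collapses to $f(\nu,\alpha,\nu) *_R f(\nu,\beta,\nu) = f(\nu,\alpha\beta,\nu)$, and likewise for $*_L$ since $n_L(D(\nu,\gamma,\nu)) = 1$ as well. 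Hence $\alpha \mapsto f(\nu,\alpha,\nu)$ is a unital $k$-algebra isomorphism $k[S_r] \xrightarrow{\sim} C^\nu$ for both $B = LGS_k$ and $B = RGS_k$.

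Next I would show $C^\lambda = 0$ for $\lambda \neq \nu$, which by the reformulation $C^\lambda = 1_\lambda B 1_\lambda / (1_\lambda B 1_\lambda \cap Be_\lambda B)$ amounts to proving $1_\lambda \in Be_\lambda B$. The key is that $\nu < \lambda$ for every $\lambda \neq \nu$, so $1_\nu$ is one of the orthogonal summands of $e_\lambda$; in particular $f(\nu,1,\lambda) = 1_\nu *_X f(\nu,1,\lambda) \in e_\lambda B \subseteq Be_\lambda B$ for $X = R,L$. On the other hand $G_\nu = \{id\} \subseteq G_\lambda$, so Case 4 (equivalently Cases 2 and 3) gives $f(\lambda,1,\nu) *_X f(\nu,1,\lambda) = \frac{o(G_\lambda)}{o(G_\nu)} 1_\lambda = o(G_\lambda)\,1_\lambda$. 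Since $k$ has characteristic zero, $o(G_\lambda)$ is invertible, and dividing yields $1_\lambda = o(G_\lambda)^{-1}\, f(\lambda,1,\nu) *_X f(\nu,1,\lambda) \in Be_\lambda B$. Therefore $1_\lambda B 1_\lambda \subseteq Be_\lambda B$ and $C^\lambda = 0$.

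Finally, the module statement follows formally: by Proposition \ref{p5.1} the isomorphism classes of irreducible $B$-modules at level $\lambda$ biject with those of irreducible $C^\lambda$-modules, and since $C^\lambda = 0$ for $\lambda \neq \nu$ there are no irreducibles above level $\nu$. Thus every irreducible $B$-module sits at level $\nu$ and corresponds to an irreducible $C^\nu \cong k[S_r]$-module. I expect the only genuinely external ingredient — the one step not internal to the machinery already built — to be the parenthetical reduction of irreducible $k[S_r]$-modules to irreducible $k[\mathfrak{S}_i]$-modules for $i \leqslant r$. This is a statement about the monoid algebra of $S_r$ (semisimple in characteristic zero), and I would supply it by appealing to the standard representation theory of finite monoids, where the irreducibles are governed by the symmetric groups attached to the $\mathcal{J}$-classes (the maximal subgroups of the principal factors), rather than from the double-coset formalism of this paper.
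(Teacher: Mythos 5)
Your proposal is correct and follows essentially the same route as the paper: identify $C^\nu = 1_\nu B 1_\nu$ with $k[S_r]$ via the singleton double cosets and Case 1 of the multiplication rule, then kill $C^\lambda$ for $\lambda \neq \nu$ by using Case 4 to write $o(G_\lambda)1_\lambda = f(\lambda,1,\nu) *_X 1_\nu *_X f(\nu,1,\lambda) \in B1_\nu B \subseteq Be_\lambda B$ and dividing by $o(G_\lambda)$, which is invertible in characteristic zero. The paper likewise leaves the parenthetical reduction to $k[\mathfrak{S}_i]$-modules to external monoid representation theory, so no discrepancy there.
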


\begin{proof}
  Let $X$ be either $L$ or $R$.  For the smallest partition $\nu $ we have $e_\nu   = 0,\,\,C^\nu   = 1_\nu  B1_\nu  $.  Then $C^\nu  $ is a $k$- vector space with basis $\left\{ {f\left( {\nu ,D,\nu } \right):D \in \,_\nu  M_\nu  } \right\}$.  Since $G_\nu   = \left\{ 1 \right\}$, the $\nu  - \nu $ double cosets $D_\alpha   = G_\nu  \alpha G_\nu   = \alpha $ consist of single elements of $S_r $.  Then $\,_\nu  M_\nu   = S_r $ and $f\left( {\nu ,D_\alpha  ,\nu } \right) \mapsto \alpha $ gives a vector space isomorphism $\phi :C^\nu   \to k[S_r ]$.  By special case 1 of the multiplication law, 	
\[f\left( {\nu ,\alpha ,\nu } \right) * _X f\left( {\nu ,\beta ,\nu } \right) = \frac{{n_X \left( {D\left( {\nu ,\alpha \beta ,\nu } \right)} \right)}}{{n_X \left( {D\left( {\nu ,\alpha ,\nu } \right)} \right)}}f\left( {\nu ,\alpha \beta ,\nu } \right) = f\left( {\nu ,\alpha \beta ,\nu } \right)
\]
(where we used the fact that $n_X (D(\nu ,\gamma ,\nu )) = 1$ for any $\gamma  \in S_r $).  So $\phi $ is an isomorphism of $k$-algebras.  So irreducible left $B$-modules at level $\nu $ correspond to irreducible left $C^\nu   \cong k[S_r ] $-modules as stated.

Now suppose $\lambda  \in \Lambda ^ +  ,\,\lambda  \ne \nu $. To show $C^\lambda   = 0$ it suffices to prove that $1_\lambda  B1_\lambda   \subseteq Be_\lambda  B$, which will be true if $1_\lambda   \in Be_\lambda  B$.  We will show that $1_\lambda   \in B1_\nu  B$.  Then since $1_\nu   = \bar e_\nu   = \bar e_\nu  e_\lambda  $, we will have $1_\lambda   \in B1_\nu  B = B\bar e_\nu  e_\lambda  B \subseteq Be_\lambda  B$ as desired.
By special case 4 of the multiplication rule, 
\[f\left( {\lambda ,1,\nu } \right) * _X 1_\nu   * _X f\left( {\nu ,1,\lambda } \right) = f\left( {\lambda ,1,\nu } \right) * _X f\left( {\nu ,1,\lambda } \right)\]
\[ = \frac{{o\left( {G_\lambda  } \right)}}
{{o\left( {G_\nu  } \right)}}f\left( {\lambda ,1,\lambda } \right) = o\left( {G_\lambda  } \right)1_\lambda  .\]
Since $k$ has characteristic zero, $o\left( {\mathfrak{S}_\lambda  } \right) \ne 0$ in the field $k$ and we have 
\[1_\lambda   = \frac{1}
{{o\left( {\mathfrak{S}_\lambda  } \right)}}f\left( {\lambda ,1,\nu } \right) * _X 1_\nu   * _X f\left( {\nu ,1,\lambda } \right) \in B1_\nu  B\] as claimed, completing the proof that $C^\lambda   = 0$.  
\end{proof}

\section{Characteristic $p$}

     We now consider the case where $R$ is a field $k$ with positive characteristic $p$.

\begin{definition} \label{d7.1}
A partition $\lambda  \in \Lambda ^ +  (r)$ is a $p$-partition if for each $i$, $1 \leqslant i \leqslant n$, either $\lambda _i  = 0$ or $\lambda _i  = p^{k_i } $ for some integer power $k_i  \geqslant 0$ of $p$.
\end{definition}
\begin{thm} \label{t7.1}
If $\lambda $ is not a $p$-partition, then $C^\lambda   = 0$.
\end{thm}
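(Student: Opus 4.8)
The plan is to imitate the characteristic-zero argument of Theorem \ref{t6.1}, where the vanishing of $C^\lambda$ for $\lambda \ne \nu$ was reduced to showing $1_\lambda \in Be_\lambda B$. Using the rewriting $C^\lambda = 1_\lambda B1_\lambda/(1_\lambda B1_\lambda \cap Be_\lambda B)$ from the end of section 5, it suffices to prove $1_\lambda \in Be_\lambda B$ whenever $\lambda$ is not a $p$-partition. The obstruction in characteristic zero was the scalar $o(\mathfrak{S}_\lambda)$, which may now be divisible by $p$; the idea is to replace the ``total collapse'' $f(\lambda,1,\nu)*_X 1_\nu *_X f(\nu,1,\lambda) = o(\mathfrak{S}_\lambda)1_\lambda$ by a single, carefully chosen refinement step whose index is a unit in $k$.

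Concretely, first I would choose a part $\lambda_i$ that is nonzero and not a power of $p$ (possible precisely because $\lambda$ is not a $p$-partition). Writing $\lambda_i$ in base $p$, let $s$ be the position of its lowest nonzero digit and split $\lambda_i = p^s + (\lambda_i - p^s)$, leaving all other parts fixed; this defines a composition $\mu$ refining $\lambda$, so that $G_\mu = \mathfrak{S}_\mu \subseteq \mathfrak{S}_\lambda = G_\lambda$. Applying special Case 4 of the multiplication rules with inner index $\mu$ and outer index $\lambda$ gives
\[f(\lambda,1,\mu) *_X 1_\mu *_X f(\mu,1,\lambda) = \frac{o(\mathfrak{S}_\lambda)}{o(\mathfrak{S}_\mu)}\,1_\lambda = \binom{\lambda_i}{p^s} 1_\lambda,\]
since the index $[\mathfrak{S}_\lambda : \mathfrak{S}_\mu]$ collapses to the single binomial coefficient coming from the split part (all other Young factors being unchanged).

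Two things then need checking. First, that $\mu < \lambda$ in the partial order of section 5, so that $1_\mu$ is one of the summands of $e_\lambda$ and hence $1_\mu = 1_\mu e_\lambda 1_\mu \in Be_\lambda B$; this holds because splitting a part into two strictly smaller positive parts strictly lowers the partition in dominance order, and dominance is refined by the lexicographic order used here, so $\mu^+ < \lambda^+$. Second — and this is the crux — that $\binom{\lambda_i}{p^s}$ is nonzero in $k$, i.e.\ coprime to $p$: by Kummer's theorem its $p$-adic valuation equals the number of carries when adding $p^s$ and $\lambda_i - p^s$ in base $p$, and because $\lambda_i$ has a nonzero digit at position $s$ (the lowest one) those two numbers add with no carry at all. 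Given this, $\binom{\lambda_i}{p^s}$ is invertible in $k$, so the displayed identity yields $1_\lambda \in B1_\mu B \subseteq Be_\lambda B$, whence $1_\lambda B1_\lambda \subseteq Be_\lambda B$ and $C^\lambda = 0$.

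The main obstacle is the number-theoretic heart: arranging a single refinement step of $\lambda$ whose index is prime to $p$ while still strictly decreasing the level. The base-$p$ digit decomposition together with Kummer's (or Lucas') theorem is what makes this possible, and the hypothesis that $\lambda$ is not a $p$-partition is exactly what guarantees some part admits such a split (if every part were $0$ or a pure power of $p$, every nontrivial refinement of a part $p^k$ would force a carry and hence a factor of $p$, so the method would fail — as it must, since those are the surviving levels). I would also double-check that Case 4 applies verbatim here, namely that the containment $G_\mu \subseteq G_\lambda$ is the only hypothesis it requires.
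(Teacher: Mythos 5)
Your proof is correct and follows essentially the same route as the paper: both arguments reduce to exhibiting a refinement $\nu < \lambda$ with $G_\nu \subseteq G_\lambda$ whose index $o\left( {G_\lambda  } \right)/o\left( {G_\nu  } \right)$ is prime to $p$, and then apply Case 4 of the multiplication rules to conclude $1_\lambda \in B1_\nu B \subseteq Be_\lambda B$. The only difference is the choice of split: you peel off a single block of size $p^s$ at the lowest nonzero base-$p$ digit of the offending part, giving the binomial coefficient $\binom{\lambda_i}{p^s}$, whereas the paper writes $\lambda_a = sp^k + R$ with $1 \leqslant s < p$, $0 \leqslant R < p^k$ and splits into $s$ blocks of size $p^k$ plus one of size $R$, giving the multinomial coefficient $\left( {sp^k + R} \right)!/\left( {[p^k!]^s R!} \right)$ --- both of which are units in $k$ by the same no-carry (Kummer/Lucas) argument.
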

\begin{proof}
Suppose $\lambda $ is not a $p$-partition.  To show $C^\lambda   = 0$ it suffices to show that $Be_\lambda  B \supseteq 1_\lambda  B1_\lambda  $.  This will be true if $1_\lambda   \in Be_\lambda  B = \sum\limits_{\mu  \in \Lambda ,\mu  < \lambda } {B1_\mu  B} $.  So if we can show that $1_\lambda   \in B1_\nu  B$ for some $\nu  \in \Lambda $ with $\nu  < \lambda $ we will be done.  

Since $\lambda $ is not a $p$-partition, $\lambda _a $ is not a power of $p$ for at least one $a$.  Write $\lambda _a  = sp^k  + R$ where $1 \leqslant s < p,\,\,0 \leqslant R < p^k $ (and $R > 0{\text{ if }}s = 1$).  Define a new composition $\nu  \in \Lambda $ by breaking the block $b_\lambda  ^a $ into $s$ blocks of size $p^k $ and one block of size $R$ (if $R > 0$).  That is, we define
\[
   \nu _i  =
	\begin{cases}
    	\lambda _i,    &\text{for $i < a$;}\\
	    p^k,           &\text{for $a + 1 \leqslant i \leqslant a + s$;}\\
	    R,             &\text{for $i = a + s + 1$;}\\
	    \lambda _{i - s - 1},  &\text{for $i > a + s + 1$.}
	\end{cases}
\]
Evidently $\nu  < \lambda $ and we can treat $\mathfrak{S}_\nu  $ as a subgroup of $\mathfrak{S}_\lambda  $.  Then by special case 4 of the multiplication rule,
\[ f\left( {\lambda ,1,\nu } \right) * _X 1_\nu   * _X f\left( {\nu ,1,\lambda } \right) = f\left( {\lambda ,1,\nu } \right) * _X f\left( {\nu ,1,\lambda } \right)\]
\[ = \frac{{o\left( {G_\lambda  } \right)}}
{{o\left( {G_\nu  } \right)}}f\left( {\lambda ,1,\lambda } \right) = \frac{{o\left( {G_\lambda  } \right)}}
{{o\left( {G_\nu  } \right)}}1_\lambda  .\]
So if $c \equiv \frac{{o\left( {G_\lambda  } \right)}}
{{o\left( {G_\nu  } \right)}} \ne 0$ in $k$, then  
\[1_\lambda   = \frac{1}{c} \cdot f\left( {\lambda ,1,\nu } \right) * _X 1_\nu   * _X f\left( {\nu ,1,\lambda } \right) \in B1_\nu  B\]
as desired.  But a little computation shows that
\[c = \frac{{o\left( {\mathfrak{S}_\lambda  } \right)}}{{o\left( {\mathfrak{S}_\nu  } \right)}} = \frac{{\prod\limits_{1 \leqslant i \leqslant r} {o\left( {\mathfrak{S}_{\lambda _i } } \right)} }}
{{\prod\limits_{1 \leqslant i \leqslant r} {o\left( {\mathfrak{S}_{\nu _i } } \right)} }}\]
\[ = \frac{{o\left( {\mathfrak{S}_{\lambda _a } } \right)}}
{{\prod\limits_{a + 1 \leqslant i \leqslant a + s + 1} {o\left( {\mathfrak{S}_{\nu _i } } \right)} }} = \frac{{\left( {sp^k  + R} \right)!}}{{[p^k !]^s  \cdot R!}} \ne 0\,\bmod p.\]  So $c \ne 0$ in $k$, which completes the proof.  
\end{proof}

Now consider a fixed $p$-partition $\lambda $.  Write $s_i  = L(\lambda ,p^i ) = \# \left\{ {j:\lambda _j  = p^i } \right\}$, so there are $s_i $ blocks $b_j ^\lambda  $ of size $p^i $.  We will define a map from the product semigroup $\prod\limits_{s_i  > 0} {\bar \tau _{s_i }}$
to $\bar \tau _r $.  For each $i$ with $s_i  > 0$, write the integers in these $s_i $ blocks in the form $c + (a - 1)p^i  + b$ where $1 \leqslant a \leqslant s_i \,,\,1 \leqslant b \leqslant p^i $ (and $c = \sum\nolimits_{j > i} {s_j p^j } $ is the number of integers in blocks larger than $p^i $).  Then given $\left\{ {\alpha _i  \in \bar \tau _{s_i } :s_i  > 0} \right\}$ define $\alpha  \in \bar \tau _r $ as follows: 
if  $j = c +(a - 1)p^i + b$ is in one of the $s_i$ blocks of size $p^i $, then
\[
\alpha (j) = \alpha \left( {c + (a - 1)p^i  + b} \right) =
     \begin{cases}
       c+\left({\alpha_i (a)-1}\right)p^i+ b,   &\text{if $\alpha _i (a) \ne 0$;}\\
       0,       &\text{if $\alpha _i (a) = 0$}
      \end{cases}
\] 
This defines a map $\phi _\lambda  :\prod\limits_{s_i  > 0} {\bar \tau _{s_i } }  \to \bar \tau _r $ where $\phi _\lambda  \left( {\prod {\alpha _i } } \right) = \alpha $.  It is not hard to check that $\phi  = \phi _\lambda  $ is an injective semigroup homomorphism.  (To understand the map $\phi $, consider the tableau obtained by filling the Young diagram corresponding to $\lambda $ with the integers 1 to $r$ in order from left to right along row 1, then row 2, etc.  Then $\phi \left( {\prod {\alpha _i } } \right) = \alpha $ maps the entries in the $a^{th}$ row of length $p^i $ one to one to the entries in the $\alpha _i \left( a \right)^{th} $ row of length $p^i $ if $\alpha _i (a) \ne 0$ or to 0 if $\alpha _i \left( a \right) = 0$.)

     Next let $S_r ^\lambda   = image\left( {\phi _\lambda  } \right) \cap S_r  \subseteq \bar \tau _r $.  We sometimes identify the semigroup $S_r ^\lambda  $ with its inverse image $\phi _\lambda  ^{ - 1} \left( {S_r ^\lambda  } \right) \subseteq \prod\limits_{s_i  > 0} {\bar \tau _{s_i } } $.  Let $k\left[ {S_r ^\lambda  } \right]$ be the semigroup algebra and (with a slight change in notation) write $B^\lambda   = 1_\lambda  B1_\lambda  $, where $B = B_L {\text{ or }}B_R $.  Then define a $k$-linear map $\psi _\lambda  :k\left[ {S_r ^\lambda  } \right] \to B^\lambda   = 1_\lambda  B1_\lambda  $ by $\psi _\lambda  (\alpha ) = f\left( {\lambda ,\alpha ,\lambda } \right)$ for $\alpha  \in S_r ^\lambda  $ (extended linearly).  Evidently no two distinct elements in $S_r ^\lambda  $ can lie in the same double coset in$\,_\lambda  M_\lambda  $, so $\psi _\lambda  $ is injective.  Write $\psi _{\lambda ,R} $ for the map $\psi _\lambda  $ when $B$ is the right generalized Schur algebra $B_R $ . 

\begin{proposition}  \label{p7.1}
$\psi _{\lambda ,R}  = \psi _\lambda  :k\left[ {S_r ^\lambda  } \right] \to B_R ^\lambda  $ is an (injective) $k$-algebra homomorphism.
\end{proposition}
\begin{proof}
 We must show that for any $\alpha ,\beta  \in S_r ^\lambda  $, $\psi _\lambda  \left( {\alpha \beta } \right) = \psi _\lambda  \left( \alpha  \right)\psi _\lambda  \left( \beta  \right)$.  Observe that for any $\delta  \in S_r ^\lambda  $, we have $\mathfrak{S}_R \left( {\lambda ,\delta ,\lambda } \right) = \mathfrak{S}_\lambda  $ and therefore $n_R \left( {\lambda ,\delta ,\lambda } \right) = o\left( {\mathfrak{S}_\lambda  } \right)$.  Also $\alpha \pi \beta  \in D_{\alpha \beta } $ for any $\alpha ,\beta  \in S_r ^\lambda  \,,\,\pi  \in \mathfrak{S}_\lambda  $ so $N\left( {D_\alpha  ,D_\beta  ,D} \right) = o\left( {\mathfrak{S}_\lambda  } \right){\text{ if }}D = D_{\alpha \beta } $ and equals $0$ otherwise.  The multiplication rule then gives
 \[\psi _\lambda  (\alpha )\psi _\lambda  (\beta ) = f\left( {\lambda ,\alpha ,\lambda } \right) * _R f\left( {\lambda ,\beta ,\lambda } \right) = \frac{{o\left( {G_\lambda  } \right) \cdot n_R (\lambda ,\alpha \beta ,\lambda )}}{{n_R (\lambda ,\alpha ,\lambda )n_R (\lambda ,\beta ,\lambda )}} \cdot f\left( {\lambda ,\alpha \beta ,\lambda } \right).\]
 Then
 \[\psi _\lambda  (\alpha )\psi _\lambda  (\beta ) = \frac{{o\left( {\mathfrak{S}_\lambda  } \right) \cdot o\left( {\mathfrak{S}_\lambda  } \right)}}{{o\left( {\mathfrak{S}_\lambda  } \right) \cdot o\left( {\mathfrak{S}_\lambda  } \right)}} \cdot f\left( {\lambda ,\alpha \beta ,\lambda } \right) = \psi _\lambda  (\alpha \beta )\] as claimed.  
\end{proof}

For the left generalized Schur algebra $B_L $, consider the submonoid $\bar \tau _{s_0 }  \cdot \prod\limits_{i > 0} {\mathfrak{S}_{s_i } }$ of  $\prod\limits_{i \geqslant 0} {\bar \tau _{s_i } } $.  Let $\phi _{\lambda ,L} $ be $\phi _\lambda  $ restricted to $\bar \tau _{s_0 }  \cdot \prod\limits_{i > 0} {\mathfrak{S}_{s_i } } $, $S_{r,L} ^\lambda   = image\left( {\phi _{\lambda ,L} } \right) \cap S_r  \subseteq S_r ^\lambda  $, and  $\psi _{\lambda ,L}  = \psi _\lambda  $ restricted to $k\left[ {S_{r,L} ^\lambda  } \right]$.  Then $\psi _{\lambda ,L} :k\left[ {S_{r,L} ^\lambda  } \right] \to B_L ^\lambda   = 1_\lambda  B_L 1_\lambda  $ is an injective, $k$-linear map.   

\begin{proposition}  \label{p7.2}
$\psi _{\lambda ,L} :k\left[ {S_{r,L} ^\lambda  } \right] \to B_L ^\lambda  $ is an (injective) k-algebra homomorphism.
\end{proposition}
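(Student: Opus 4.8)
The plan is to mirror the proof of Proposition \ref{p7.1}, replacing $*_R$ by $*_L$ and the right-hand subgroups by their left-hand analogues. Thus I must show that $\psi_{\lambda,L}(\alpha) *_L \psi_{\lambda,L}(\beta) = f\left( {\lambda ,\alpha ,\lambda } \right) *_L f\left( {\lambda ,\beta ,\lambda } \right)$ equals $f\left( {\lambda ,\alpha \beta ,\lambda } \right) = \psi_{\lambda,L}(\alpha\beta)$ for all $\alpha,\beta \in S_{r,L}^\lambda$. By the $*_L$ multiplication rule with $\nu = \mu = \lambda$, everything reduces to two computations: the value of $n_L\left( {\lambda ,\delta ,\lambda } \right)$ for $\delta \in S_{r,L}^\lambda$, and the value of $N\left( {D_\alpha ,D_\beta ,D} \right)$.

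The main step is to show that $G_L\left( {\lambda ,\delta ,\lambda } \right) = G_\lambda$, and hence $n_L\left( {\lambda ,\delta ,\lambda } \right) = o\left( {G_\lambda } \right)$, for every $\delta \in S_{r,L}^\lambda$; this is exactly where the restriction to $S_{r,L}^\lambda$ is forced. Writing $\delta = \phi_\lambda\left( {\prod_i \delta_i} \right)$, the element $\delta$ carries the $a$-th block of size $p^i$ onto the $\delta_i(a)$-th block of the same size, preserving the position within the block. Given $\pi \in G_\lambda$, the condition $\delta\pi = \sigma\delta$ defining membership in $G_L$ becomes, block by block, $\sigma_{\delta_i(a)} = \pi_a$ on each source block $a$ with $\delta_i(a) \ne 0$, where $\pi_a,\sigma_{a'}$ denote the components of $\pi,\sigma$ on the corresponding blocks. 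For $i > 0$ the restriction forces $\delta_i \in \mathfrak{S}_{s_i}$ to be a bijection, so $\sigma$ is determined on every target block by $\sigma_{a'} = \pi_{\delta_i^{-1}(a')}$ with no collision and no block sent to $0$; for $i = 0$ the blocks have size $1$, so $\pi_a$ and $\sigma_{a'}$ are trivial and the (possibly non-injective) $\delta_0 \in \bar\tau_{s_0}$ imposes no constraint. Hence a suitable $\sigma$ exists for every $\pi$, giving $G_L\left( {\lambda ,\delta ,\lambda } \right) = G_\lambda$. I expect this step to be the main obstacle, precisely because the analogous argument for $G_R$ in Proposition \ref{p7.1} solves for $\pi$ in terms of $\sigma$ (indexed by the source blocks, so non-injectivity of $\delta_i$ is harmless), whereas here one must solve for $\sigma$ in terms of $\pi$ (indexed by the target blocks), and a non-injective $\delta_i$ on a block of size $> 1$ would shrink $G_L$ strictly below $G_\lambda$.

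The computation of $N\left( {D_\alpha ,D_\beta ,D} \right)$ is then identical to that in Proposition \ref{p7.1}: for any $\rho \in G_\lambda$ one checks that $\alpha\rho\beta = (\alpha\beta)\tau$, where $\tau \in G_\lambda$ is the element whose component on each source block $a$ (of size $p^i$ with $\beta_i(a) \ne 0$) is $\rho_{\beta_i(a)}$, and is trivial otherwise; this uses only that $\alpha,\beta$ preserve positions within blocks, not injectivity. Consequently $\alpha\rho\beta \in D_{\alpha\beta}$ for every $\rho$, so $N\left( {D_\alpha ,D_\beta ,D} \right) = o\left( {G_\lambda } \right)$ if $D = D_{\alpha\beta}$ and $0$ otherwise.

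Finally, since the domain $\bar\tau_{s_0} \cdot \prod_{i>0}\mathfrak{S}_{s_i}$ is a submonoid and $\phi_\lambda$ is a homomorphism, $\alpha\beta$ again lies in $S_{r,L}^\lambda$, so $n_L\left( {\lambda ,\alpha\beta ,\lambda } \right) = o\left( {G_\lambda } \right)$ by the main step. Substituting $n_L\left( {\lambda ,\alpha ,\lambda } \right) = n_L\left( {\lambda ,\beta ,\lambda } \right) = n_L(D_{\alpha\beta}) = o\left( {G_\lambda } \right)$ together with $N\left( {D_\alpha ,D_\beta ,D_{\alpha\beta}} \right) = o\left( {G_\lambda } \right)$ into the $*_L$ rule collapses the sum to the single term $D = D_{\alpha\beta}$ with coefficient $\frac{o\left( {G_\lambda } \right)\,o\left( {G_\lambda } \right)}{o\left( {G_\lambda } \right)\,o\left( {G_\lambda } \right)} = 1$, yielding $\psi_{\lambda,L}(\alpha) *_L \psi_{\lambda,L}(\beta) = f\left( {\lambda ,\alpha\beta ,\lambda } \right) = \psi_{\lambda,L}(\alpha\beta)$, as required.
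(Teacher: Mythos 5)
Your proposal is correct and follows the same route as the paper: establish $G_L\left( {\lambda ,\delta ,\lambda } \right) = G_\lambda$ (hence $n_L\left( {\lambda ,\delta ,\lambda } \right) = o\left( {\mathfrak{S}_\lambda } \right)$) for $\delta \in S_{r,L}^\lambda$, compute $N\left( {D_\alpha ,D_\beta ,D} \right)$, and substitute into the $*_L$ multiplication rule. The only difference is that you spell out the block-by-block verification of the key identity $G_L\left( {\lambda ,\delta ,\lambda } \right) = G_\lambda$ (and why it forces $\delta_i \in \mathfrak{S}_{s_i}$ for $i>0$), which the paper simply asserts.
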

\begin{proof}
  The proof is similar to that of proposition \ref{p7.1}.  We must show that for any $\alpha ,\beta  \in S_{r,L} ^\lambda  $, $\psi _{\lambda ,L} \left( {\alpha \beta } \right) = \psi _{\lambda ,L} \left( \alpha  \right)\psi _{\lambda ,L} \left( \beta  \right)$.  The key is that for the particular elements $\delta  \in S_{r,L} ^\lambda  $, we have $\mathfrak{S}_L \left( {\lambda ,\delta ,\lambda } \right) = \mathfrak{S}_\lambda  $ and therefore $n_L \left( {\lambda ,\delta ,\lambda } \right) = o\left( {\mathfrak{S}_\lambda  } \right)$.  Also we again have $\alpha \pi \beta  \in D_{\alpha \beta } $ for any $\alpha ,\beta  \in S_{r,L} ^\lambda  \,,\,\pi  \in \mathfrak{S}_\lambda  $, so $N\left( {D_\alpha  ,D_\beta  ,D} \right) = o\left( {\mathfrak{S}_\lambda  } \right){\text{ if }}D = D_{\alpha \beta } $ and equals $0$ otherwise.  The multiplication rule then gives
  \[\psi _{\lambda ,L} (\alpha )\psi _{\lambda ,L} (\beta ) = f\left( {\lambda ,\alpha ,\lambda } \right) * _L f\left( {\lambda ,\beta ,\lambda } \right) = \frac{{o\left( {G_\lambda  } \right) \cdot n_L (\lambda ,\alpha \beta ,\lambda )}}
  {{n_L (\lambda ,\alpha ,\lambda )n_L (\lambda ,\beta ,\lambda )}} \cdot f\left( {\lambda ,\alpha \beta ,\lambda } \right).\]
  Then
  \[\psi _{\lambda ,L} (\alpha )\psi _{\lambda ,L} (\beta ) = \frac{{o\left( {\mathfrak{S}_\lambda  } \right) \cdot o\left( {\mathfrak{S}_\lambda  } \right)}}{{o\left( {\mathfrak{S}_\lambda  } \right) \cdot o\left( {\mathfrak{S}_\lambda  } \right)}} \cdot f\left( {\lambda ,\alpha \beta ,\lambda } \right) = \psi _{\lambda ,L} (\alpha \beta )\] as desired.  
\end{proof}

     Now let $\Pi _X :B_X ^\lambda   \to C_X ^\lambda   = B_X ^\lambda  /\left( {B_X e_\lambda  B_X  \cap B_X ^\lambda  } \right)$, for $X = L{\text{ or }}R$, be the natural projection.  We will eventually show that the $k$-algebra homomophisms $\Pi _X  \circ \psi _{\lambda ,X} :k\left[ {S_{r,X} ^\lambda  } \right] \to C_X ^\lambda  $ are surjective.  The following lemma will be used to show that certain $f\left( {\lambda ,\alpha ,\lambda } \right)$ are in $\ker \left( {\Pi _X } \right) = B_X e_\lambda  B_X  \cap B_X ^\lambda  $.

\begin{lemma}  \label{l7.1}
Given a partition $\lambda  \in \Lambda ^ +  $ and $\alpha  \in S_r $, suppose there exists a composition $\nu  \in \Lambda $ such that $G_\nu   \subseteq G_\lambda  $, $\nu  < \lambda $, and either $G_X \left( {\lambda ,\alpha ,\lambda } \right) \subseteq G_X \left( {\nu ,\alpha ,\lambda } \right)$ or $G_X \left( {\lambda ,\alpha ,\lambda } \right) \subseteq G_X \left( {\lambda ,\alpha ,\nu } \right)$.  Then $f\left( {\lambda ,\alpha ,\lambda } \right)  \in \ker \left( {\Pi _X } \right)$.
\end{lemma}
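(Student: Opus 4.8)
The plan is to prove the stronger statement $f(\lambda,\alpha,\lambda)\in B1_\nu B$ and then deduce membership in $\ker(\Pi_X)$. First I would observe that since $\nu<\lambda$, the idempotent $1_\nu$ occurs as one of the orthogonal summands of $e_\lambda=\sum_{\mu<\lambda}1_\mu$; hence $1_\nu=1_\nu e_\lambda 1_\nu\in Be_\lambda B$, and because $Be_\lambda B$ is a two-sided ideal we get $B1_\nu B\subseteq Be_\lambda B$. Since $f(\lambda,\alpha,\lambda)=1_\lambda *_X f(\lambda,\alpha,\lambda) *_X 1_\lambda$ lies in $1_\lambda B1_\lambda=B^\lambda$ automatically, it suffices to exhibit $f(\lambda,\alpha,\lambda)$ as an element of $B1_\nu B$; then $f(\lambda,\alpha,\lambda)\in B1_\nu B\cap B^\lambda\subseteq Be_\lambda B\cap B^\lambda=\ker(\Pi_X)$.

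For the first hypothesis, $G_X(\lambda,\alpha,\lambda)\subseteq G_X(\nu,\alpha,\lambda)$, I would use special case 2 of the multiplication rules (which applies because $G_\nu\subseteq G_\lambda$) to evaluate the product $f(\lambda,1,\nu) *_X 1_\nu *_X f(\nu,\alpha,\lambda)$. This product manifestly lies in $B1_\nu B$, and since $1_\nu *_X f(\nu,\alpha,\lambda)=f(\nu,\alpha,\lambda)$, special case 2 identifies it with $\frac{n_X(D(\lambda,\alpha,\lambda))}{n_X(D(\nu,\alpha,\lambda))}f(\lambda,\alpha,\lambda)$; that is, the scalar is the ratio of orders $o(G_X(\lambda,\alpha,\lambda))/o(G_X(\nu,\alpha,\lambda))$. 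The key point, and essentially the only thing to verify, is that the reverse inclusion $G_X(\nu,\alpha,\lambda)\subseteq G_X(\lambda,\alpha,\lambda)$ holds automatically, straight from the defining descriptions of $G_R$ and $G_L$ in Section 3 together with $G_\nu\subseteq G_\lambda$. Combined with the hypothesis this forces $G_X(\lambda,\alpha,\lambda)=G_X(\nu,\alpha,\lambda)$, so the scalar equals $1$ and the product is exactly $f(\lambda,\alpha,\lambda)$.

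The second hypothesis, $G_X(\lambda,\alpha,\lambda)\subseteq G_X(\lambda,\alpha,\nu)$, I would handle symmetrically, using special case 3 to evaluate $f(\lambda,\alpha,\nu) *_X 1_\nu *_X f(\nu,1,\lambda)$; now the scalar is $o(G_X(\lambda,\alpha,\lambda))/o(G_X(\lambda,\alpha,\nu))$, and the automatic inclusion $G_X(\lambda,\alpha,\nu)\subseteq G_X(\lambda,\alpha,\lambda)$ again forces equality and scalar $1$. In either case $f(\lambda,\alpha,\lambda)\in B1_\nu B$, which finishes the argument. I do not anticipate a genuine obstacle here: the only real content is checking the two automatic group inclusions and correctly matching the specialized structure constants of cases 2 and 3 with ratios of group orders, both immediate from the Section 3 definitions. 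The one point that deserves care is to use the hypothesized inclusion in the direction that yields scalar exactly $1$ (rather than merely nonzero); this is precisely what makes the argument valid over a field of arbitrary characteristic, with no divisibility assumption needed.
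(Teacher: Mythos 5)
Your proposal is correct and follows essentially the same route as the paper: both note that $G_\nu\subseteq G_\lambda$ forces the reverse inclusions, hence equality of the relevant stabilizer subgroups, and then apply special case 2 (resp.\ case 3) to realize $f\left( {\lambda ,\alpha ,\lambda } \right)$ as $f\left( {\lambda ,1,\nu } \right) * _X 1_\nu * _X f\left( {\nu ,\alpha ,\lambda } \right)$ (resp.\ the symmetric product) with scalar exactly $1$. The only difference is cosmetic: the paper substitutes $1_\nu = e_\lambda 1_\nu$ directly inside the product, while you first record $B1_\nu B\subseteq Be_\lambda B$; these are the same observation.
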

\begin{proof}
It is easy to check that $G_\nu   \subseteq G_\lambda  $ implies $G_X \left( {\lambda ,\alpha ,\lambda } \right) \supseteq G_X \left( {\nu ,\alpha ,\lambda } \right)$ and $G_X \left( {\lambda ,\alpha ,\lambda } \right) \supseteq G_X \left( {\lambda ,\alpha ,\nu } \right)$, so our hypotheses imply either $G_X \left( {\lambda ,\alpha ,\lambda } \right) = G_X \left( {\nu ,\alpha ,\lambda } \right)$ or $G_X \left( {\lambda ,\alpha ,\lambda } \right) = G_X \left( {\lambda ,\alpha ,\nu } \right)$, and hence either $n_X \left( {\lambda ,\alpha ,\lambda } \right) = n_X \left( {\nu ,\alpha ,\lambda } \right)$ or $n_X \left( {\lambda ,\alpha ,\lambda } \right) = n_X \left( {\lambda ,\alpha ,\nu } \right)$.  Suppose $n_X \left( {\lambda ,\alpha ,\lambda } \right) = n_X \left( {\nu ,\alpha ,\lambda } \right)$.  Then case 2 of the multiplication rule gives
 \[f\left( {\lambda ,1,\nu } \right) * _X f\left( {\nu ,\alpha ,\lambda } \right) = \frac{{n_X \left( {\lambda ,\alpha ,\lambda } \right)}}
{{n_X \left( {\nu ,\alpha ,\lambda } \right)}}f\left( {\lambda ,\alpha ,\lambda } \right) = f (\lambda , \alpha , \lambda ).\]  Since $\nu  < \lambda $ we have $e_\lambda  1_\nu   = 1_\nu  $, so
 \[f\left( {\lambda ,1,\nu } \right) * _X f\left( {\nu ,\alpha ,\lambda } \right) = f\left( {\lambda ,1,\nu } \right) * _X 1_\nu   * _X f\left( {\nu ,\alpha ,\lambda } \right) = \]

\[f\left( {\lambda ,1,\nu } \right) * _X e_\lambda  1_\nu   * _X f\left( {\nu ,\alpha ,\lambda } \right) \in B_X e_\lambda  B_X  \cap B_X ^\lambda   = \ker \left( {\Pi _X } \right) .\]
Then $f\left( {\lambda ,\alpha ,\lambda } \right)  \in \ker \left( {\Pi _X } \right)$ as desired.
     If $n_X \left( {\lambda ,\alpha ,\lambda } \right) = n_X \left( {\lambda ,\alpha ,\nu } \right)$, a parallel proof using case 3 of the multiplication rule again gives $f\left( {\lambda ,\alpha ,\lambda } \right)  \in \ker \left( {\Pi _X } \right)$.  
\end{proof}

     Let $\lambda  \in \Lambda $ be a composition and take any $\alpha  \in S_r $.  Consider $\alpha $ as a matrix with rows $\alpha _i $ indexed by $i = 0,1,2, \cdots ,r$. 

\begin{definition}  \label{d7.2}
Two rows $\alpha _i ,\alpha _j $ of $\alpha $ are $\lambda $-equivalent if $\# \left( {\alpha ^{ - 1} (i) \cap b_k ^\lambda  } \right) = \# \left( {\alpha ^{ - 1} (j) \cap b_k ^\lambda  } \right)$ for every $\lambda $-block $b_k ^\lambda  $.
\end{definition}
  This means that the two rows have the same number of 1's in columns belonging to any $\lambda $-block.  If we let $\mathfrak{S}_r $ act (on the right) on the rows of $\alpha $ by permuting elements, then $\alpha _i ,\alpha _j $ are $\lambda $-equivalent if and only if $\alpha _j  = \alpha _i \sigma $ for some $\sigma  \in \mathfrak{S}_\lambda  $.  

\begin{lemma}  \label{l7.2}
For any $\lambda  \in \Lambda $ and $\alpha  \in S_r $, suppose there exist $\pi  \in G_\lambda  ,\sigma  \in \mathfrak{S}_r $, such that $\alpha \pi  = \sigma \alpha $.  Then for any $1 \leqslant i \leqslant r$, rows $i$ and $\sigma \left( i \right)$ of $\alpha $ are $\lambda $-equivalent.
\end{lemma}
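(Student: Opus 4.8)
The plan is to turn the single matrix identity $\alpha\pi = \sigma\alpha$ into a statement about the individual entries of $\alpha$, and then read off directly the equality of block-counts that Definition \ref{d7.2} demands. Throughout I regard $\alpha$, $\pi$, and $\sigma$ as $(r+1)\times(r+1)$ matrices indexed from $0$ to $r$, with $\pi \in G_\lambda = \mathfrak{S}_\lambda$ and $\sigma \in \mathfrak{S}_r$ the corresponding permutation matrices bordered by a fixed row and column $0$.

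First I would record how multiplication by a permutation matrix acts on $\alpha$. Since $(\sigma\alpha)_{i,j} = \alpha_{\sigma^{-1}(i),j}$ (left multiplication permutes rows) and $(\alpha\pi)_{i,j} = \alpha_{i,\pi(j)}$ (right multiplication permutes columns), the hypothesis $\alpha\pi = \sigma\alpha$ becomes, entry by entry, $\alpha_{i,\pi(j)} = \alpha_{\sigma^{-1}(i),j}$ for all $i,j$. Replacing $i$ by $\sigma(i)$ yields the cleaner form $\alpha_{\sigma(i),\pi(j)} = \alpha_{i,j}$, valid for every $1 \leqslant i \leqslant r$; here I note that $\sigma$ fixes $0$ and permutes $\{1,\dots,r\}$, so $\sigma(i)$ is again a genuine row index in that range.

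Next I would convert this entrywise identity into a statement about the supports of the two rows. Because $\alpha_{k,j} = 1$ exactly when $j \in \alpha^{-1}(k)$, the identity $\alpha_{\sigma(i),\pi(j)} = \alpha_{i,j}$ says precisely that $j \in \alpha^{-1}(i)$ if and only if $\pi(j) \in \alpha^{-1}(\sigma(i))$, i.e. $\alpha^{-1}(\sigma(i)) = \pi\big(\alpha^{-1}(i)\big)$. The final step is the observation that $\pi \in \mathfrak{S}_\lambda$ stabilizes each $\lambda$-block $b_k^\lambda$ setwise, so applying $\pi$ to $\alpha^{-1}(i)$ permutes elements within each block without moving any element between blocks. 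Hence $\#\big(\alpha^{-1}(\sigma(i)) \cap b_k^\lambda\big) = \#\big(\pi(\alpha^{-1}(i)) \cap b_k^\lambda\big) = \#\big(\alpha^{-1}(i) \cap b_k^\lambda\big)$ for every $k$, which is exactly the condition in Definition \ref{d7.2} that rows $i$ and $\sigma(i)$ be $\lambda$-equivalent. (Equivalently, the identity exhibits row $\sigma(i)$ as row $i$ acted on by $\pi^{-1} \in \mathfrak{S}_\lambda$, matching the reformulation in the remark following the definition.)

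I do not expect a serious obstacle: the content is a direct unwinding of the matrix equation. The only place demanding care is the bookkeeping — keeping the two permutation conventions straight (function versus matrix, left action on rows versus right action on columns), correctly handling the border row and column $0$ so that $\sigma(i)$ stays in $\{1,\dots,r\}$ and column $0$ (which lies in no $\lambda$-block) never contributes to any count, and invoking the block-stabilizing property of $\pi$ at exactly the right moment.
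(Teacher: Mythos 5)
Your proof is correct and follows essentially the same route as the paper's: both arguments amount to showing $\alpha^{-1}(\sigma(i)) \cap b_k^\lambda = \pi\bigl(\alpha^{-1}(i)\cap b_k^\lambda\bigr)$ from the relation $\alpha\pi=\sigma\alpha$ and then using that $\pi\in\mathfrak{S}_\lambda$ stabilizes each block; you merely phrase the key identity entrywise as $\alpha_{\sigma(i),\pi(j)}=\alpha_{i,j}$ rather than chasing the elements $c_1,\dots,c_m$ directly. If anything, your ``if and only if'' formulation makes the set equality (both inclusions) slightly more explicit than the paper's one-directional count.
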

\begin{proof}
For a given $\lambda $-block $b_k ^\lambda  $, assume $\# \left( {\alpha ^{ - 1} (i) \cap b_k ^\lambda  } \right) = m$ and write $\alpha ^{ - 1} (i) \cap b_k ^\lambda   = \left\{ {c_1 ,c_2 , \cdots ,c_m } \right\}$.  Then $\alpha \left( {c_j } \right) = i$, so $\sigma \left( i \right) = \sigma \alpha \left( {c_j } \right) = \alpha \pi \left( {c_j } \right)$.  Then $\pi \left( {c_j } \right) \in \alpha ^{ - 1} \left( {\sigma \left( i \right)} \right)$.  Also, $\pi \left( {c_j } \right) \in b_k ^\lambda  $ since $\pi  \in G_\lambda  $.  So $\alpha ^{ - 1} (\sigma (i)) \cap b_k ^\lambda   = \left\{ {\pi c_1 ,\pi c_2 , \cdots ,\pi c_m } \right\}$.  Then $\# \left( {\alpha ^{ - 1} (\sigma (i)) \cap b_k ^\lambda  } \right) = m = \# \left( {\alpha ^{ - 1} (i) \cap b_k ^\lambda  } \right)$.  Since this is true for all $\lambda $-blocks, rows $i$ and $\sigma \left( i \right)$ of $\alpha $ are $\lambda $-equivalent as claimed.  
\end{proof}
 
\begin{lemma}  \label{l7.3}
If two rows $\alpha _i ,\alpha _j $ of $\alpha $ with $i,j$ in the same $\lambda $-block $b_k ^\lambda  $ are not $\lambda $-equivalent, then $f\left( {\lambda ,\alpha ,\lambda } \right)  \in \ker \left( {\Pi _X } \right)$.
\end{lemma}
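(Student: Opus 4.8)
The plan is to reduce the statement to Lemma \ref{l7.1}: I want to manufacture a composition $\nu \in \Lambda$ with $\mathfrak{S}_\nu \subseteq \mathfrak{S}_\lambda$, $\nu < \lambda$, and $G_X(\lambda,\alpha,\lambda) = G_X(\nu,\alpha,\lambda)$, which by that lemma forces $f(\lambda,\alpha,\lambda) \in \ker(\Pi_X)$ for both $X = L$ and $X = R$. The enabling observation is that $f(\lambda,\alpha,\lambda)$, and the property of lying in $\ker(\Pi_X)$, depend only on the double coset $\mathfrak{S}_\lambda \alpha \mathfrak{S}_\lambda$. Hence I may replace $\alpha$ by $\sigma'\alpha$ for any $\sigma' \in \mathfrak{S}_\lambda$; taking $\sigma' \in \mathfrak{S}(b_k^\lambda)$ permutes exactly the rows of $\alpha$ indexed by $b_k^\lambda$ among themselves. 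Since such a relabeling pulls back the $\lambda$-equivalence relation of Definition \ref{d7.2} along $\sigma'$, I can arrange that the rows indexed by $b_k^\lambda$ that fall into a common $\lambda$-equivalence class occupy consecutive positions, while preserving the hypothesis that $b_k^\lambda$ carries at least two mutually non-$\lambda$-equivalent rows.

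With the rows so arranged, I would define $\nu$ by cutting the block $b_k^\lambda$ into consecutive sub-blocks, one for each $\lambda$-equivalence class of its rows, and leaving every other block of $\lambda$ unchanged. By hypothesis there are at least two such classes, so this is a proper refinement: $\nu$ replaces the part $\lambda_k$ by two or more strictly smaller parts, which makes $\nu^+ < \lambda^+$ in the lexicographic order and hence $\nu < \lambda$; refining blocks also gives $\mathfrak{S}_\nu \subseteq \mathfrak{S}_\lambda$ (and this $\nu$ still lies in $\Lambda$, there being enough parts available).

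Finally I would establish the group equality, uniformly in $X$. Membership of $\sigma$ in $G_R(\lambda,\alpha,\lambda)$, and likewise the existence of a witness $\sigma$ for a given $\pi \in G_L(\lambda,\alpha,\lambda)$, both produce a relation $\alpha\pi = \sigma\alpha$ with $\pi,\sigma \in \mathfrak{S}_\lambda$. By Lemma \ref{l7.2}, every such $\sigma$ sends each row $i$ to a $\lambda$-equivalent row $\sigma(i)$, so $\sigma$ fixes each $\lambda$-equivalence class of rows setwise; as $\sigma \in \mathfrak{S}_\lambda$ already preserves the blocks of $\lambda$, and the sub-blocks of $b_k^\lambda$ were chosen to be precisely these classes, it follows that $\sigma \in \mathfrak{S}_\nu$. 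For $X = R$ this gives $G_R(\lambda,\alpha,\lambda) \subseteq \mathfrak{S}_\nu$, whence $G_R(\lambda,\alpha,\lambda) = G_R(\nu,\alpha,\lambda)$; for $X = L$ the same witness $\sigma \in \mathfrak{S}_\nu$ shows $\pi \in G_L(\nu,\alpha,\lambda)$, whence $G_L(\lambda,\alpha,\lambda) = G_L(\nu,\alpha,\lambda)$. Either way the hypothesis of Lemma \ref{l7.1} holds, so $f(\lambda,\alpha,\lambda) \in \ker(\Pi_X)$. The only genuine obstacle is the mismatch between $\lambda$-equivalence classes, which need not be consecutive, and Young sub-blocks, which must be; this is exactly what the opening reordering within the double coset dissolves.
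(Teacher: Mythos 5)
Your proposal is correct and follows essentially the same route as the paper: reorder within the double coset so that $\lambda$-equivalence classes of rows in $b_k^\lambda$ become consecutive, refine $b_k^\lambda$ accordingly to get $\nu < \lambda$ with $G_\nu \subseteq G_\lambda$, and use Lemma \ref{l7.2} to show any witness $\sigma$ preserves the sub-blocks, so Lemma \ref{l7.1} applies. The only (immaterial) difference is that the paper splits $b_k^\lambda$ into just two pieces --- the class of the first row and its complement --- rather than into all equivalence classes.
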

\begin{proof}
Suppose $b_k ^\lambda   = \left\{ {c + 1,c + 2, \cdots ,c + \lambda _k } \right\}$ is a $\lambda $-block for which two rows of $\alpha $ are not $\lambda $-equivalent.  By choosing, if necessary, a different representative for $\alpha $ in the same $G_\lambda   - G_\lambda  $
 double coset, we can assume that rows $c + 1,c + 2, \cdots ,c + a$ are all $\lambda $-equivalent for some $1 \leqslant a < \lambda _k $, while none of the rows $c + a + 1,c + a + 2, \cdots ,c + \lambda _k $ are $\lambda $-equivalent to row $c + 1$.  Let $\nu $ be the composition obtained by splitting block $b_k ^\lambda  $ into two (nonempty) blocks $\left\{ {c + 1, \cdots ,c + a} \right\}$ and $\left\{ {c + a + 1, \cdots ,c + \lambda _k } \right\}$, while leaving the other blocks unchanged.  Then $G_\nu   \subseteq G_\lambda  $ and $\nu  < \lambda $.  By lemma \ref{l7.1}, if we show $G_X \left( {\lambda ,\alpha ,\lambda } \right) \subseteq G_X \left( {\nu ,\alpha ,\lambda } \right)$ then $f\left( {\lambda ,\alpha ,\lambda } \right)  \in \ker \left( {\Pi _X } \right)$ and the proof is complete.

    For $X = R$, take any $\sigma  \in G_R \left( {\lambda ,\alpha ,\lambda } \right) \subseteq \mathfrak{S}_\lambda  $.  Then there exists a $\pi  \in G_\lambda  $ such that $\alpha \pi  = \sigma \alpha $.  By lemma \ref{l7.2}, rows $i$ and $\sigma \left( i \right)$ of $\alpha $
 must be $\lambda $-equivalent for any $i$.  Then in particular, $\sigma $ must map blocks  $\left\{ {c + 1, \cdots ,c + a} \right\}$ and $\left\{ {c + a + 1, \cdots ,c + \lambda _k } \right\}$ into themselves. Since $\sigma  \in G_\lambda  $, it maps all other blocks to themselves.  Then $\sigma  \in G_\nu  $ and $\sigma  \in G_\nu   \cap G_R \left( {\lambda ,\alpha ,\lambda } \right) = G_R \left( {\nu ,\alpha ,\lambda } \right)$.  It follows that $G_R \left( {\lambda ,\alpha ,\lambda } \right) \subseteq G_R \left( {\nu ,\alpha ,\lambda } \right)$ as desired.

The proof for $X = L$ is similar. Take any $\pi  \in G_L \left( {\lambda ,\alpha ,\lambda } \right) \subseteq \mathfrak{S}_\lambda  $.  Then there exists a $\sigma  \in G_\lambda  $ such that $\alpha \pi  = \sigma \alpha $.  Then lemma \ref{l7.2} again shows that rows $i$ and $\sigma \left( i \right)$ of $\alpha $ must be $\lambda $-equivalent for any $i$.  Then $\sigma  \in G_\nu  $ (as for the case $X = R$ above), which implies $\pi  \in G_L \left( {\nu ,\alpha ,\lambda } \right)$.  It follows that $G_L \left( {\lambda ,\alpha ,\lambda } \right) \subseteq G_L \left( {\nu ,\alpha ,\lambda } \right)$ as desired.  
\end{proof}

\begin{corollary}  \label{c7.1}
Take $\alpha  \in S_r $ with $f\left( {\lambda ,\alpha ,\lambda } \right)  \notin \ker \left( {\Pi _X } \right)$.  Suppose $i \in b_k ^\lambda  $, a block of size $\lambda _k $, and $\alpha \left( i \right) \in b_j ^\lambda  $, a block of size $\lambda _j $.  Then $\lambda _k  \geqslant \lambda _j $.  If $\lambda _k  = \lambda _j $, then $\alpha $ maps the block $b_k ^\lambda  $ one to one onto the block $b_j ^\lambda  $.
\end{corollary}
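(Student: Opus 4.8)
The plan is to combine the contrapositive of Lemma \ref{l7.3} with a simple double-counting argument on the target block $b_j^\lambda$. Since $f(\lambda,\alpha,\lambda)\notin\ker(\Pi_X)$, Lemma \ref{l7.3} tells us that within any single $\lambda$-block no two rows of $\alpha$ can fail to be $\lambda$-equivalent; hence all rows $\alpha_p$ with $p$ in a common block are $\lambda$-equivalent. I would apply this observation specifically to the block $b_j^\lambda$ that contains $\alpha(i)$.

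First I would set $m=\#\bigl(\alpha^{-1}(\alpha(i))\cap b_k^\lambda\bigr)$ and note $m\geqslant 1$, since $i$ itself lies in $b_k^\lambda$ and satisfies $\alpha(i)=\alpha(i)$. Because the rows indexed by elements of $b_j^\lambda$ are all $\lambda$-equivalent, each $p\in b_j^\lambda$ has the same count in block $b_k^\lambda$, namely $\#\bigl(\alpha^{-1}(p)\cap b_k^\lambda\bigr)=m$. Summing this over $p\in b_j^\lambda$ counts exactly those elements of $b_k^\lambda$ that $\alpha$ sends into $b_j^\lambda$ (the fibers $\alpha^{-1}(p)$ are disjoint since $\alpha$ is a function), giving a total of $\lambda_j m$. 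As all these elements lie in $b_k^\lambda$, the total is at most $\lambda_k$, so $\lambda_j m\leqslant\lambda_k$. Since $m\geqslant 1$ and $\lambda_j\geqslant 0$, this yields $\lambda_j\leqslant\lambda_j m\leqslant\lambda_k$, which is the first assertion.

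For the equality case $\lambda_k=\lambda_j$, the bound $\lambda_j m\leqslant\lambda_k=\lambda_j$ forces $m=1$, and the equality $\lambda_j m=\lambda_k$ shows every element of $b_k^\lambda$ is carried into $b_j^\lambda$. Having $m=1$ means each $p\in b_j^\lambda$ has exactly one preimage in $b_k^\lambda$, so $\alpha$ restricts to a bijection $b_k^\lambda\to b_j^\lambda$, completing the proof.

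I do not expect a serious obstacle here; the entire content is in invoking Lemma \ref{l7.3} to force $\lambda$-equivalence of all rows over the target block. The only point needing a moment of care is recognizing that $\sum_{p\in b_j^\lambda}\#\bigl(\alpha^{-1}(p)\cap b_k^\lambda\bigr)$ equals the number of elements of $b_k^\lambda$ mapped into $b_j^\lambda$, but this is immediate from disjointness of the fibers of the function $\alpha$.
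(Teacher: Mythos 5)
Your proof is correct and follows essentially the same route as the paper: both invoke the contrapositive of Lemma \ref{l7.3} to conclude that all rows indexed by $b_j^\lambda$ are $\lambda$-equivalent, and then count the disjoint fibers $\alpha^{-1}(p)\cap b_k^\lambda$ for $p\in b_j^\lambda$ inside $b_k^\lambda$ to get $\lambda_j\leqslant\lambda_k$ and the bijection in the equality case. Your explicit use of the common fiber size $m$ is a minor cosmetic refinement of the paper's observation that each fiber is nonempty; the substance is identical.
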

\begin{proof}
By lemma \ref{l7.3}, any row $\alpha _l $ with $l \in b_j ^\lambda  $
 must be $\lambda $-equivalent to row $\alpha \left( i \right)$.  Then since $i \in \alpha ^{ - 1} (\alpha \left( i \right)) \cap b_k ^\lambda  $, each set $\alpha ^{ - 1} \left( {\alpha _l } \right) \cap b_k ^\lambda  $ for $l \in b_j ^\lambda  $ must contain at least one element.  These $\lambda _j $ nonempty sets are disjoint subsets of the set $b_k ^\lambda  $ which has $\lambda _k $ elements, so clearly $\lambda _k  \geqslant \lambda _j $.  If $\lambda _k  = \lambda _j $, then each set $\alpha ^{ - 1} \left( {\alpha _l } \right) \cap b_k ^\lambda  $ must contain exactly one element and every element of $b_k ^\lambda  $ must lie in one such set.  But then $\alpha $ maps the block $b_k ^\lambda  $ one to one onto the block $b_j ^\lambda  $ as claimed.  
\end{proof}

We now treat $B_R $ and $B_L $ separately.  For $B_R $ we have

\begin{lemma}    \label{l7.4}
For $\alpha  \in S_r $, suppose there exists $i \in b_k ^\lambda  $, a block of size $\lambda _k $, with $\alpha \left( i \right) \in b_j ^\lambda  $, a block of size $\lambda _j $ where $\lambda _k  > \lambda _j $.  Then $f\left( {\lambda ,\alpha ,\lambda } \right) \in \ker \left( {\Pi _R } \right)$.  
\end{lemma}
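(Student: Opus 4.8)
The plan is to produce a composition $\nu \in \Lambda$ with $G_\nu \subseteq G_\lambda$, $\nu < \lambda$, for which $G_R(\lambda,\alpha,\lambda) \subseteq G_R(\lambda,\alpha,\nu)$; the second alternative of Lemma \ref{l7.1} then gives $f(\lambda,\alpha,\lambda) \in \ker(\Pi_R)$ at once. The useful $\nu$ will be a refinement of the \emph{column} block $b_k^\lambda$, so the first thing I would do is reinterpret $G_R(\lambda,\alpha,\lambda)$ combinatorially. A right coset $\alpha G_\lambda$ is determined by recording, for each column-block $b_m^\lambda$, the multiset $M_m$ of rows $\alpha(l)$ with $l \in b_m^\lambda$, since permuting columns within a block by an element of $G_\lambda$ leaves each $M_m$ fixed. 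Hence $\sigma \in G_\lambda$ stabilizes $\alpha G_\lambda$ precisely when $\sigma(M_m) = M_m$ for every $m$, i.e. $G_R(\lambda,\alpha,\lambda) = \{\sigma \in G_\lambda : \sigma(M_m)=M_m \text{ for all } m\}$. Splitting only $b_k^\lambda$ into two consecutive sub-blocks $b',b''$ to form $\nu$ adds exactly the requirements $\sigma(M_{b'})=M_{b'}$ and $\sigma(M_{b''})=M_{b''}$, so the target inclusion reduces to decomposing the image multiset $M_k$ into two nonempty sub-multisets $M_{b'}, M_{b''}$ that are fixed by every $\sigma$ fixing $M_k$.

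Next I would construct such a decomposition. By hypothesis $\alpha(i)\in M_k\cap b_j^\lambda$, so $M_k$ meets $b_j^\lambda$, while $|M_k|=\lambda_k > \lambda_j = |b_j^\lambda|$. If some element of $M_k$ lies outside $b_j^\lambda$ (in another row-block or equal to $0$), I take $b' = \alpha^{-1}(b_j^\lambda)\cap b_k^\lambda$ and $b''$ its complement in $b_k^\lambda$; both are nonempty, and because every $\sigma\in G_\lambda$ respects row-block membership and fixes $0$, fixing $M_k$ forces $\sigma$ to fix its $b_j^\lambda$-part $M_{b'}$ and the remainder $M_{b''}$. If instead $\alpha(b_k^\lambda)\subseteq b_j^\lambda$, then $\lambda_k>\lambda_j$ forces by pigeonhole some row of $b_j^\lambda$ to occur in $M_k$ with multiplicity $\geq 2$; I then split $M_k$ by multiplicity, letting $M_{b'}$ contain one copy of each row occurring in $M_k$ and $M_{b''}$ the leftover copies. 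Since any $\sigma$ fixing the multiset $M_k$ must send rows of a given multiplicity to rows of the same multiplicity, it fixes both parts. In either case a reordering of the columns of $b_k^\lambda$ (a change of representative of the double coset $G_\lambda\alpha G_\lambda$, which leaves $f(\lambda,\alpha,\lambda)$ unchanged) realizes $b',b''$ as consecutive sub-blocks, giving a genuine $\nu < \lambda$ with $G_\nu\subseteq G_\lambda$.

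With the decomposition in place, the multiset description yields $G_R(\lambda,\alpha,\lambda)\subseteq G_R(\lambda,\alpha,\nu)$, and Lemma \ref{l7.1} delivers $f(\lambda,\alpha,\lambda)\in\ker(\Pi_R)$. I expect the main obstacle to be the degenerate case $\alpha(b_k^\lambda)\subseteq b_j^\lambda$, where the natural ``split by target block'' collapses and one must instead exploit the strict inequality $\lambda_k>\lambda_j$ through a repeated image; the delicate point there is checking that the multiplicity split is invariant under \emph{every} $\sigma$ fixing $M_k$, not merely under the possibly smaller group $G_R(\lambda,\alpha,\lambda)$ one starts with. A secondary technical matter is the bookkeeping that lets the two pieces of $M_k$ be arranged into an honest coarsening $\nu$ of $\lambda$, which is exactly where the freedom to replace $\alpha$ by another double-coset representative is used.
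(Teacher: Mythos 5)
Your argument is correct, and while it shares the paper's overall skeleton --- split the large block $b_k^\lambda$ into two nonempty consecutive sub-blocks to get $\nu<\lambda$ with $G_\nu\subseteq G_\lambda$, verify $G_R\left(\lambda,\alpha,\lambda\right)\subseteq G_R\left(\lambda,\alpha,\nu\right)$, and invoke Lemma \ref{l7.1} --- both the choice of split and the verification differ genuinely from the paper's. The paper first invokes Lemma \ref{l7.3} to reduce to the case where all rows of $b_j^\lambda$ are $\lambda$-equivalent, so that the fibers $\alpha^{-1}(l)\cap b_k^\lambda$ all have a common size $m$; it then normalizes $\alpha$ so that $b_k^\lambda$ begins with $m$ consecutive copies of $b_j^\lambda$, splits off only the \emph{first} such copy, and proves the stabilizer inclusion by an explicit element-level construction of $\pi'\in G_\nu$ from a given $\pi\in G_\lambda$ (gluing $\pi$ outside $C$ with $\alpha^{-1}|_{b_j^\lambda}\circ\sigma\alpha|_{C_i}$ on each copy). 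You instead encode the right coset $\alpha G_\lambda$ by the blockwise image multisets $M_m$, which turns the stabilizer inclusion into the purely combinatorial statement that the two halves of the split of $M_k$ are preserved by any $\sigma\in G_\lambda$ preserving $M_k$; your split is by target row-block when $M_k$ meets the complement of $b_j^\lambda$, and by multiplicity when $\alpha\left(b_k^\lambda\right)\subseteq b_j^\lambda$. This buys you independence from Lemma \ref{l7.3} and avoids the explicit construction of $\pi'$, at the cost of having to establish the multiset characterization of right cosets (which is easy and is in effect the same observation underlying the paper's Definition \ref{d7.2} of $\lambda$-equivalence). Both proofs rely equally on the freedom to change the double coset representative, and both are complete; yours is arguably the cleaner packaging.
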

\begin{proof}
By lemma \ref{l7.3}, $f\left( {\lambda ,\alpha ,\lambda } \right) \in \ker \left( {\Pi _R } \right)$ if there are two rows in the block $b_j ^\lambda  $ which are not $\lambda $-equivalent, so assume all rows in block $b_j ^\lambda  $ are $\lambda $-equivalent.  Then for all $l \in b_j ^\lambda  $, the disjoint sets $\alpha ^{ - 1} \left( l \right) \cap b_k ^\lambda  $ contain the same number, say $m$, of elements.  Then by altering $\alpha $ within its double coset if necessary, we can assume that the block $b_k ^\lambda $ has the form
\[
c + 1, \cdots ,c + \lambda _j ,c + \lambda _j  + 1, \cdots ,c + 2\lambda _j , \cdots ,\]
\[c + (m - 1)\lambda _j  + 1, \cdots ,c + m\lambda _j ,c + m\lambda _j  + 1, \cdots ,c + \lambda _k 
\]
where each sub-block $\left\{ {c + i\lambda _j  + 1, \cdots ,c + (i + 1)\lambda _j } \right\}$ is mapped by $\alpha $ one to one onto block $b_j ^\lambda  $, while none of $\left\{ {c + m\lambda _j  + 1, \cdots ,c + \lambda _k } \right\}$ is mapped to block $b_j ^\lambda  $.

Let $\nu $ be the composition obtained by splitting block $b_k ^\lambda  $ into two (nonempty) blocks $\left\{ {c + 1, \cdots ,c + \lambda _j } \right\}$ and $\left\{ {c + \lambda _j  + 1, \cdots ,c + \lambda _k } \right\}$, while leaving the other blocks unchanged.  Then $G_\nu   \subseteq G_\lambda  $ and $\nu  < \lambda $.  By lemma \ref{l7.1}, if we show $G_R \left( {\lambda ,\alpha ,\lambda } \right) \subseteq G_R \left( {\lambda ,\alpha ,\nu } \right)$ then $f\left( {\lambda ,\alpha ,\lambda } \right) \in \ker \left( {\Pi _R } \right)$ and the proof is complete.

     Take any $\sigma  \in G_R \left( {\lambda ,\alpha ,\lambda } \right)$.  There exists $\pi  \in G_\lambda  $ such that $\sigma \alpha  = \alpha \pi $.  To show that $\sigma  \in G_R \left( {\lambda ,\alpha ,\nu } \right)$ we must find a $\pi ' \in G_\nu  $ such that $\sigma \alpha  = \alpha \pi '$.  Let $C = \left\{ {c + 1, \cdots ,c + m\lambda _j } \right\} \subseteq b_k ^\lambda  $.  Notice that $\pi $ must map $C$ onto $C$ and $b_k ^\lambda   - C$ onto $b_k ^\lambda   - C$:  if $\pi \left( i \right) \in b_k ^\lambda   - C$ for some $i \in C$, then $\alpha \pi \left( i \right) \notin b_j ^\lambda  $, while $\sigma \alpha \left( i \right) \in b_j ^\lambda  $, contradicting $\sigma \alpha  = \alpha \pi $.   Then if $\pi '\left( i \right) = \pi \left( i \right)$ for $i \notin C$, $\pi '$ maps each $\lambda $-block not equal to $b_k ^\lambda  $ onto itself and also maps $b_k ^\lambda   - C$ onto itself.  For each sub-block $C_i  = \left\{ {c + i\lambda _j  + 1, \cdots ,c + i\lambda _j  + \lambda _j } \right\}\,,\,i = 0,1, \cdots ,m - 1$, both $\alpha $ and $\sigma \alpha $ map $C_i $ one to one onto $b_j ^\lambda  $.  Then define $\pi '$ restricted to $C_i $ to be the permutation $\left. {\alpha ^{ - 1} } \right|b_j ^\lambda   \circ \left. {\sigma \alpha } \right|C_i $ of $C_i $ such that $\left. {\alpha \pi '} \right|C_i  = \left. {\sigma \alpha } \right|C_i $.  Then $\pi ' \in G_\nu  $ and $\sigma \alpha  = \alpha \pi '$ , so $\sigma  \in G_R \left( {\lambda ,\alpha ,\nu } \right)$.  So $G_R \left( {\lambda ,\alpha ,\lambda } \right) \subseteq G_R \left( {\lambda ,\alpha ,\nu } \right)$ and we are done.  
\end{proof}

\begin{proposition}   \label{p7.3}
If $D_\alpha   \notin \left\{ {D_{\alpha '} :\alpha ' \in S_r ^\lambda  } \right\}$, then $f\left( {\lambda ,\alpha ,\lambda } \right) \in \ker \left( {\Pi _R } \right)$.  The $k$-algebra homomorphism $\Pi _R  \circ \psi _{\lambda ,R} :k\left[ {S_r ^\lambda  } \right] \to C_R ^\lambda  $ is surjective.
\end{proposition}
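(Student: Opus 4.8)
The plan is to establish the first assertion and deduce surjectivity from it. For the second assertion, observe that $C_R^\lambda = B_R^\lambda/\ker(\Pi_R)$ is spanned as a $k$-vector space by the images $\Pi_R\!\left(f(\lambda,D,\lambda)\right)$ as $D$ ranges over ${}_\lambda M_\lambda$. Granting the first assertion, every generator with $D \notin \{D_{\alpha'} : \alpha' \in S_r^\lambda\}$ vanishes, so $C_R^\lambda$ is spanned by the elements $\Pi_R\!\left(f(\lambda,\alpha',\lambda)\right) = (\Pi_R \circ \psi_{\lambda,R})(\alpha')$ with $\alpha' \in S_r^\lambda$; this is exactly surjectivity of $\Pi_R \circ \psi_{\lambda,R}$. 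It therefore suffices to prove the first assertion, which I would argue in contrapositive form: if $f(\lambda,\alpha,\lambda) \notin \ker(\Pi_R)$, then $D_\alpha = D_{\alpha'}$ for some $\alpha' \in S_r^\lambda$.

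So assume $f(\lambda,\alpha,\lambda) \notin \ker(\Pi_R)$. The first step is to pin down the combinatorial shape of $\alpha$. Fix a block $b_k^\lambda$ of size $\lambda_k = p^e$ (recall $\lambda$ is a $p$-partition) and suppose some $i \in b_k^\lambda$ has $\alpha(i) \neq 0$, say $\alpha(i) \in b_j^\lambda$ of size $\lambda_j$. Corollary \ref{c7.1} gives $\lambda_k \geq \lambda_j$, while Lemma \ref{l7.4} rules out $\lambda_k > \lambda_j$ (it would force $f(\lambda,\alpha,\lambda) \in \ker(\Pi_R)$). Hence $\lambda_k = \lambda_j$, and the equality case of Corollary \ref{c7.1} shows that $\alpha$ maps $b_k^\lambda$ bijectively onto $b_j^\lambda$; in particular no element of $b_k^\lambda$ is sent to $0$. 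Thus each block $b_k^\lambda$ is either annihilated by $\alpha$ or mapped bijectively onto a single same-size block $b_{j(k)}^\lambda$.

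The second step is to adjust $\alpha$ within its double coset to land in $S_r^\lambda$. The block data above determines, for each $p^e$ with $s_e > 0$, a transformation $\alpha_e$ of the index set $\{1,\ldots,s_e\}$ of size-$p^e$ blocks (sending the index of $b_k^\lambda$ to that of $b_{j(k)}^\lambda$, or to $0$ when $b_k^\lambda$ is annihilated), and this is precisely the data parameterizing $\text{image}(\phi_\lambda)$. The only gap between $\alpha$ and $\phi_\lambda\!\left(\prod_e \alpha_e\right)$ is that each bijection $b_k^\lambda \to b_{j(k)}^\lambda$ need not preserve the internal offset. To correct this, write $\alpha(c_k + x) = c_{j(k)} + \beta_k(x)$ for a permutation $\beta_k$ of $\{1,\ldots,p^e\}$ and right-multiply $\alpha$ by the element $\pi \in G_\lambda = \mathfrak{S}_\lambda$ acting as $\beta_k^{-1}$ on each $b_k^\lambda$ and trivially elsewhere. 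Since the source blocks are pairwise disjoint these corrections are independent, blocks sent to $0$ are untouched, and $\alpha' := \alpha\pi$ becomes order-preserving on every block it does not annihilate; hence $\alpha' = \phi_\lambda\!\left(\prod_e \alpha_e\right) \in \text{image}(\phi_\lambda)$. As $\alpha \in S_r$ and $\pi \in \mathfrak{S}_r \subseteq S_r$, we get $\alpha' \in \text{image}(\phi_\lambda) \cap S_r = S_r^\lambda$, and $\pi \in G_\lambda$ gives $D_{\alpha'} = D_\alpha$, completing the contrapositive.

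I expect the correction step to be the only genuine obstacle: one must verify simultaneously, for all blocks, that the required order-preservation can be achieved inside $G_\lambda \alpha G_\lambda$ without altering the block images or the blocks sent to $0$. Performing the adjustment by right multiplication (on the domain, where the source blocks are disjoint) rather than left multiplication is what makes the corrections independent, and sidesteps the difficulty that several distinct source blocks may be mapped onto one common target block.
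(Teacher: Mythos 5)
Your proposal is correct and follows essentially the same route as the paper: the same appeal to Corollary \ref{c7.1} (to exclude images in larger blocks and to get bijectivity in the equal-size case) and to Lemma \ref{l7.4} (to exclude images in smaller blocks), followed by the same deduction of surjectivity from the vanishing of the remaining basis elements. The only difference is that you spell out the final identification $D_\alpha = D_{\alpha'}$ via an explicit right multiplication by an element of $G_\lambda$ correcting the internal offsets, a step the paper asserts without detail; your version of that step is valid.
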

\begin{proof}
Take any $\alpha  \in S_r $ with $f\left( {\lambda ,\alpha ,\lambda } \right)  \notin \ker \left( {\Pi _R } \right)$ and consider a $\lambda $-block $b_k ^\lambda  $ of size $\lambda _k $.  By corollary \ref{c7.1}, $\alpha $ cannot map any element of $b_k ^\lambda  $ into a block of size greater than $\lambda _k $ and, by lemma \ref{l7.4},  $\alpha $ cannot map any element of $b_k ^\lambda  $ into a block of size less than $\lambda _k $.  So, by corollary \ref{c7.1} again, either $\alpha $ maps all of $b_k ^\lambda  $ to 0 or it maps $b_k ^\lambda  $ one to one onto another block of size $\lambda _k $.  But then $\alpha $ is in the same $G_\lambda   - G_\lambda  $ double coset as some $\alpha ' \in S_r ^\lambda  $, that is, $D_\alpha   = D_{\alpha '} $
 for $\alpha ' \in S_{r,R} ^\lambda  $.

     Since the projection $\Pi _R :B_R ^\lambda   \to C_R ^\lambda  $ is surjective and we have just shown that any basis element $D_\alpha  $ for $B_R ^\lambda  $ not in $\left\{ {D_{\alpha '} :\alpha ' \in S_r ^\lambda  } \right\}$ is in $\ker \left( {\Pi _R } \right)$, $\Pi _R $ must map the $k$-span of $\left\{ {D_{\alpha '} :\alpha ' \in S_r ^\lambda  } \right\}$ onto $C_R ^\lambda  $.  But the $k$-span of $\left\{ {D_{\alpha '} :\alpha ' \in S_r ^\lambda  } \right\}$ is exactly $\psi _{\lambda ,R} \left( {k\left[ {S_r ^\lambda  } \right]} \right)$, so  $\Pi _R  \circ \psi _{\lambda ,R} :k\left[ {S_r ^\lambda  } \right] \to C_R ^\lambda  $
 is surjective.  
\end{proof}

     For $B_L $ we have
\begin{lemma}   \label{l7.5}
For any $\lambda  \in \Lambda ^ +  $ and $\alpha  \in S_r $, if $b_k ^\lambda  $ is a block of size $\lambda _k  > 1$ and $\alpha ^{ - 1} \left( {b_k ^\lambda  } \right) = \emptyset $, then $f\left( {\lambda ,\alpha ,\lambda } \right)  \in \ker \left( {\Pi _L } \right)$.
\end{lemma}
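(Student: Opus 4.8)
The plan is to invoke Lemma \ref{l7.1} with a composition $\nu$ obtained by subdividing the block $b_k^\lambda$, and to verify the first of its two alternatives, namely the containment $G_L(\lambda,\alpha,\lambda) \subseteq G_L(\nu,\alpha,\lambda)$. Since $\lambda_k > 1$, I would write $b_k^\lambda = \{c+1, \ldots, c+\lambda_k\}$ and let $\nu$ be the composition that replaces this single block by the two nonempty blocks $\{c+1\}$ and $\{c+2, \ldots, c+\lambda_k\}$, leaving all other blocks of $\lambda$ intact. Then $G_\nu \subseteq G_\lambda$, and exactly as in the earlier proofs (Theorem \ref{t7.1}, Lemma \ref{l7.3}) splitting a part of size larger than $1$ produces a strictly lex-smaller partition, so $\nu < \lambda$. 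With these choices Lemma \ref{l7.1} reduces the whole statement to proving $G_L(\lambda,\alpha,\lambda) \subseteq G_L(\nu,\alpha,\lambda)$.

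The heart of the argument is the hypothesis $\alpha^{-1}(b_k^\lambda) = \emptyset$, which says precisely that $\alpha(j) \notin b_k^\lambda$ for every column $j$; equivalently, the rows of the matrix $\alpha$ indexed by $b_k^\lambda$ are all zero. Now take any $\pi \in G_L(\lambda,\alpha,\lambda)$, so there is a witness $\sigma \in G_\lambda$ with $\alpha\pi = \sigma\alpha$. Viewing this as an identity of maps, $\sigma(\alpha(j)) = \alpha(\pi(j))$ for all $j$, and since no $\alpha(j)$ lies in $b_k^\lambda$, the values of $\sigma$ on $b_k^\lambda$ are completely unconstrained by this equation. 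Because $\sigma \in G_\lambda$ already maps $b_k^\lambda$ onto itself, I would define $\sigma'$ to agree with $\sigma$ off $b_k^\lambda$ and to be the identity on $b_k^\lambda$. Then $\sigma'$ is a bijection lying in $G_\nu$ (on $b_k^\lambda$ it fixes $\{c+1\}$ and preserves $\{c+2,\ldots,c+\lambda_k\}$, while away from $b_k^\lambda$ it equals $\sigma$, which preserves the unchanged blocks), and $\sigma'(\alpha(j)) = \sigma(\alpha(j)) = \alpha(\pi(j))$ for every $j$ since $\alpha(j) \notin b_k^\lambda$. Hence $\alpha\pi = \sigma'\alpha$ with $\sigma' \in G_\nu$, so $\pi \in G_L(\nu,\alpha,\lambda)$, giving the containment.

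The one point requiring care — and the only place the hypothesis is genuinely used — is the claim that altering $\sigma$ on $b_k^\lambda$ leaves the composite $\sigma\alpha$ unchanged: this holds exactly because $\alpha$ never takes a value in $b_k^\lambda$, so the behavior of $\sigma$ on $b_k^\lambda$ is irrelevant to $\sigma\alpha$. I expect no obstacle beyond checking that the modified $\sigma'$ still lies in $G_\nu$ and still serves as a witness, both of which are immediate from the construction. Once $G_L(\lambda,\alpha,\lambda) \subseteq G_L(\nu,\alpha,\lambda)$ is established, Lemma \ref{l7.1} (in its first alternative) immediately yields $f(\lambda,\alpha,\lambda) \in \ker(\Pi_L)$, completing the proof.
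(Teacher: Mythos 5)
Your proof is correct and follows essentially the same route as the paper: both reduce to Lemma \ref{l7.1} via the containment $G_L(\lambda,\alpha,\lambda) \subseteq G_L(\nu,\alpha,\lambda)$, established by replacing the witness $\sigma$ with a $\sigma'$ that is the identity on $b_k^\lambda$, which is harmless precisely because $\alpha$ never takes values in $b_k^\lambda$. The only (immaterial) difference is that the paper splits $b_k^\lambda$ into $\lambda_k$ singletons rather than into $\{c+1\}$ and $\{c+2,\ldots,c+\lambda_k\}$; either choice gives $G_\nu \subseteq G_\lambda$, $\nu < \lambda$, and $\sigma' \in G_\nu$.
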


\begin{proof}
  If $\alpha ^{ - 1} \left( {b_k ^\lambda  } \right) = \emptyset $, Let $\nu $ be the composition obtained by breaking block $b_k ^\lambda  $ into $\lambda _k $ blocks of size $1$.  Then $G_\nu \subseteq G_\lambda $
and $\nu  < \lambda $ (assuming $\lambda _k  > 1$).   By lemma 6.1, if we show $G_L \left( {\lambda ,\alpha ,\lambda } \right) \subseteq G_L \left( {\nu ,\alpha ,\lambda } \right)$ then $f\left( {\lambda ,\alpha ,\lambda } \right) \in \ker \left( {\Pi _L } \right)$ and the proof is complete.

     Take any $\pi  \in G_L \left( {\lambda ,\alpha ,\lambda } \right)$ and a corresponding $\sigma  \in G_\lambda  $ such that $\alpha \pi  = \sigma \alpha $.  Define $\sigma ' \in G_\lambda  $ by $\sigma '\left( i \right) = \left\{ {\begin{array}{*{20}c}
   {i{\text{ if }}i \in b_k ^\lambda  }  \\
   {\sigma \left( i \right){\text{ otherwise}}}  \\

 \end{array} } \right.$.  Then $\sigma ' \in G_\nu  $.  But since, for all $j$, $\alpha \left( j \right) \notin b_k ^\lambda  $, we have $\sigma '\alpha \left( j \right) = \sigma \alpha \left( j \right) = \alpha \pi \left( j \right)$ for all $j$.  Then $\sigma '\alpha  = \alpha \pi $ and  $\pi  \in G_L \left( {\nu ,\alpha ,\lambda } \right)$.  So  $G_L \left( {\lambda ,\alpha ,\lambda } \right) \subseteq G_L \left( {\nu ,\alpha ,\lambda } \right)$.  
\end{proof}
\begin{corollary}    \label{c7.2}
 Suppose $f\left( {\lambda ,\alpha ,\lambda } \right)  \notin \ker \left( {\Pi _L } \right)$.  Then for any block $b_k ^\lambda  $ of size $\lambda _k $, $\alpha $ maps $b_k ^\lambda  $ one to one onto a block $b_j ^\lambda  $ of the same size $\lambda _j  = \lambda _k $. Furthermore, for any block $b_k ^\lambda  $ of size $\lambda _k  > 1$,  $\alpha ^{ - 1} \left( {b_k ^\lambda  } \right) = b_j ^\lambda  $ for some block $b_j ^\lambda  $ of the same size $\lambda _j  = \lambda _k $.  
\end{corollary}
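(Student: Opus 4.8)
The plan is to combine the image constraint from Corollary \ref{c7.1} with the preimage constraint from Lemma \ref{l7.5}, and to run a downward induction on block size. Throughout assume $f(\lambda,\alpha,\lambda)\notin\ker(\Pi_L)$. Recall the two consequences of Corollary \ref{c7.1}: whenever $i\in b_k^\lambda$ and $\alpha(i)\in b_j^\lambda$ we have $\lambda_k\geqslant\lambda_j$ (images land in blocks no larger than their source), and equivalently every element of $\alpha^{-1}(b_j^\lambda)$ lies in a block of size $\geqslant\lambda_j$ (preimages come from blocks no smaller than their target). Moreover, when source and target blocks have equal size, Corollary \ref{c7.1} upgrades this to: $\alpha$ carries the source block one to one onto the target block.

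First I would handle blocks of size $>1$ by induction downward on the size $p^i$ (over those $i\geqslant 1$ with $s_i>0$), proving at each stage that $\alpha$ permutes the blocks of size $p^i$, mapping each bijectively onto a block of the same size. For the largest occurring size $p^N$, every preimage element of a size-$p^N$ block must come from a block of size $\geqslant p^N$, hence from a size-$p^N$ block; and by Lemma \ref{l7.5} (valid since $p^N>1$) each size-$p^N$ block has nonempty preimage. By the equal-size clause of Corollary \ref{c7.1}, any size-$p^N$ block that sends an element into a given size-$p^N$ target is carried bijectively, hence entirely, onto that target, so it contributes to no other target. Thus the $s_N$ nonempty preimage-sets are pairwise disjoint subsets of the $s_N$ source blocks, forcing each to be a singleton: $\alpha$ induces a permutation of the size-$p^N$ blocks, each block mapping bijectively onto its image.

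For the inductive step I would use that, by the inductive hypothesis, every block of size $>p^i$ is mapped bijectively onto a same-size block and is therefore ``full'', so no element of a larger block lands in a size-$p^i$ block. Combined with the preimage constraint (preimages of a size-$p^i$ block come from blocks of size $\geqslant p^i$), this pins the preimage of any size-$p^i$ block inside the size-$p^i$ blocks. Lemma \ref{l7.5} again gives nonempty preimages, and the same disjointness-and-counting argument produces a permutation of the size-$p^i$ blocks with each mapping bijectively onto a same-size block. This simultaneously yields both assertions for blocks of size $>1$: the image statement (each such block maps one to one onto a same-size block, in particular is never sent to $0$) and the preimage statement (the preimage of each is a single block of the same size). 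For a block of size $1$, Corollary \ref{c7.1} forces its single element to map either to $0$ or into a block of size $\leqslant 1$; in the latter case the block maps one to one onto a size-$1$ block, while the exceptional possibility of mapping to $0$ is exactly the partial-transformation behaviour $\bar\tau_{s_0}$ permits on size-$1$ blocks for $B_L$.

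The main obstacle is the induction's engine: ruling out that an element of a larger block ``leaks'' into a strictly smaller block. Corollary \ref{c7.1} alone only bounds image sizes from above, so a priori a large block could feed a small one; what rescues us is the covering requirement of Lemma \ref{l7.5}, which says each block of size $>1$ must be hit. Counting then forces the same-size blocks to cover one another bijectively, leaving no room for leakage, and this is precisely why the argument must proceed from the largest size downward rather than in a single local step.
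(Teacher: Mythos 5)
Your proof is correct and follows essentially the same route as the paper: a downward induction on block size, combining the image/preimage constraints of Corollary \ref{c7.1} with the nonempty-preimage guarantee of Lemma \ref{l7.5}, then a disjointness-plus-counting argument to force a permutation of the same-size blocks. Your explicit remark that a size-$1$ block may map to $0$ is a fair observation about the literal wording of the corollary; the paper silently accommodates that same exception later, in the proof of Proposition \ref{p7.4}.
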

\begin{proof}
Use ``downward induction'':  Assume for some size $s$ that $\alpha $
maps all blocks of size $ > s$ one to one onto blocks of the same size and that the inverse image of any such block is another block of the same size.  Then consider a block $b_j ^\lambda  $ of size $\lambda _j  = s$.  By our assumption, $\alpha ^{ - 1} \left( {b_j ^\lambda  } \right)$ can contain no element of a block of size greater than $s$.  But by corollary \ref{c7.1}, $\alpha ^{ - 1} \left( {b_j ^\lambda  } \right)$ can contain no element of a block of size less than $s$.  So $\alpha ^{ - 1} \left( {b_j ^\lambda  } \right)$ contains only elements in blocks of size $s$.  By corollary \ref{c7.1} again, if it contains any element of such a block $b_k ^\lambda  $, then  $\alpha $ maps all of $b_k ^\lambda  $ one to one onto $b_j ^\lambda  $.  If $s > 1$ then by lemma \ref{l7.5},  $\alpha ^{ - 1} \left( {b_j ^\lambda  } \right) \ne \emptyset $, so $\alpha ^{ - 1} \left( {b_j ^\lambda  } \right)$ must be a union of one or more blocks of size $s$.  But then since the sets $\alpha ^{ - 1} \left( {b_j ^\lambda  } \right)$ are disjoint for the different blocks $b_j ^\lambda  $
 of size $\lambda _j  = s$, each $\alpha ^{ - 1} \left( {b_j ^\lambda  } \right)$ must consist of a single block and each block $b_k ^\lambda  $ of size $s$ must appear as $\alpha ^{ - 1} \left( {b_j ^\lambda  } \right)$ for exactly one block $b_j ^\lambda  $ of size $\lambda _j  = s$.  By induction, the result is true for all sizes $s$. 
\end{proof}
\begin{proposition}    \label{p7.4}
  If $D_\alpha   \notin \left\{ {D_{\alpha '} :\alpha ' \in S_{r,L} ^\lambda  } \right\}$, then $f\left( {\lambda ,\alpha ,\lambda } \right)  \in \ker \left( {\Pi _L } \right)$.  The $k$-algebra homomorphism $\Pi _L  \circ \psi _{\lambda ,L} :k\left[ {S_{r,L} ^\lambda  } \right] \to C_L ^\lambda  $ is surjective.
\end{proposition}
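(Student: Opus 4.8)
The plan is to run the argument of Proposition \ref{p7.3} almost verbatim, with Corollary \ref{c7.2} playing the role that Corollary \ref{c7.1} and Lemma \ref{l7.4} played in the $B_R$ case. First I would take any $\alpha \in S_r$ with $f(\lambda,\alpha,\lambda) \notin \ker(\Pi_L)$ and read off its block structure. By Corollary \ref{c7.2}, every block $b_k^\lambda$ of size $\lambda_k > 1$ is carried one to one onto a block of the same size, and the inverse image of each such block is again a full block of the same size; hence $\alpha$ permutes the blocks of each size $p^i$ with $i > 0$ bijectively among themselves (none can be sent to $0$, as each is accounted for as the preimage of another). By Corollary \ref{c7.1}, each block of size $1$ is sent either to $0$ or onto a block of size $1$, with no injectivity required on the collection of size-$1$ blocks. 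This is exactly the combinatorial pattern realized by the map $\phi_{\lambda,L}$.

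Next I would convert this pattern into a double-coset statement. Recall $S_{r,L}^\lambda = \image(\phi_{\lambda,L}) \cap S_r$, where $\phi_{\lambda,L}$ is $\phi_\lambda$ restricted to $\bar\tau_{s_0} \cdot \prod_{i>0} \mathfrak{S}_{s_i}$; an element of $\image(\phi_{\lambda,L})$ applies a permutation $\alpha_i \in \mathfrak{S}_{s_i}$ to the blocks of size $p^i$ for $i > 0$ (preserving the position $b$ within a block) and an arbitrary partial transformation $\alpha_0 \in \bar\tau_{s_0}$ to the size-$1$ blocks. The block-to-block assignment of the $\alpha$ above determines exactly such data: permutations $\alpha_i$ for $i > 0$ and a partial transformation $\alpha_0$. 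Setting $\alpha' = \phi_{\lambda,L}(\prod \alpha_i)$, the within-block bijections of $\alpha$ are straightened to the position-preserving normal form of $\alpha'$ by multiplying $\alpha$ on the left and right by suitable elements of $G_\lambda = \mathfrak{S}_\lambda$, so $D_\alpha = D_{\alpha'}$. Since $\alpha \in S_r$ and $G_\lambda \subseteq S_r$, the representative $\alpha'$ also lies in $S_r$, whence $\alpha' \in S_{r,L}^\lambda$. This proves the first assertion: if $D_\alpha \notin \{D_{\alpha'} : \alpha' \in S_{r,L}^\lambda\}$, then $f(\lambda,\alpha,\lambda) \in \ker(\Pi_L)$.

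The surjectivity then follows exactly as in Proposition \ref{p7.3}. The projection $\Pi_L : B_L^\lambda \to C_L^\lambda$ is surjective, and we have just shown that every basis element $f(\lambda,\alpha,\lambda)$ with $D_\alpha \notin \{D_{\alpha'} : \alpha' \in S_{r,L}^\lambda\}$ lies in $\ker(\Pi_L)$. Therefore $\Pi_L$ already maps the $k$-span of $\{f(\lambda,\alpha',\lambda) : \alpha' \in S_{r,L}^\lambda\}$ onto $C_L^\lambda$; but that span is precisely $\psi_{\lambda,L}(k[S_{r,L}^\lambda])$, so $\Pi_L \circ \psi_{\lambda,L}$ is surjective.

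The step I expect to require the most care is the middle paragraph: checking that the block data extracted from Corollaries \ref{c7.1} and \ref{c7.2} assembles into a genuine element of $\bar\tau_{s_0} \cdot \prod_{i>0}\mathfrak{S}_{s_i}$ and that the resulting $\alpha'$ sits in the same double coset as $\alpha$. One must confirm that the freedom to collapse size-$1$ blocks to $0$ (and to be non-injective there, as $\bar\tau_{s_0}$ permits) is compatible with the normal form, and that straightening the within-block maps by elements of $G_\lambda$ on each block independently does not disturb the action already fixed on the other blocks. Everything else is a direct transcription of the $B_R$ argument.
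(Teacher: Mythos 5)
Your proposal is correct and follows essentially the same route as the paper: extract the block-level combinatorics from Corollaries \ref{c7.1} and \ref{c7.2}, conclude $D_\alpha = D_{\alpha'}$ for some $\alpha' \in S_{r,L}^\lambda$, and then deduce surjectivity exactly as in Proposition \ref{p7.3}. The extra care you take in the middle paragraph (straightening within blocks by elements of $G_\lambda$ and checking $\alpha' \in S_r$) only makes explicit what the paper leaves implicit.
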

\begin{proof}
  Take any $\alpha  \in S_r $ with $f\left( {\lambda ,\alpha ,\lambda } \right) \notin \ker \left( {\Pi _L } \right)$.  By corollary \ref{c7.2}, for any $i > 0$ there is a permutation $\sigma  \in \mathfrak{S}_{s_i } $ of the blocks $b_k ^\lambda  $ of size $\lambda _k  = p^i  > 1$ such that $\alpha $ maps the elements of $b_k ^\lambda  $ one to one onto the block $\sigma \left( {b_k ^\lambda  } \right)$ of the same size $p^i $.  Also the blocks of size $1$ are mapped by $\alpha $ to blocks of size one (or to zero).  But then $\alpha $ is in the same $G_\lambda   - G_\lambda  $ double coset as some $\alpha ' \in S_{r,L} ^\lambda  $, that is, $D_\alpha   = D_{\alpha '} $ for $\alpha ' \in S_{r,L} ^\lambda  $.

     Since the projection $\Pi _L :B_L ^\lambda   \to C_L ^\lambda  $ is surjective and we have just shown that any basis element $D_\alpha  $ for $B_L ^\lambda  $ not in $\left\{ {D_{\alpha '} :\alpha ' \in S_{r,L} ^\lambda  } \right\}$ is in $\ker \left( {\Pi _L } \right)$, $\Pi _L $ must map the $k$-span of $\left\{ {D_{\alpha '} :\alpha ' \in S_{r,L} ^\lambda  } \right\}$ onto $C_L ^\lambda  $.  But the $k$-span of $\left\{ {D_{\alpha '} :\alpha ' \in S_{r,L} ^\lambda  } \right\}$ is exactly $\psi _{\lambda ,L} \left( {k\left[ {S_{r,L} ^\lambda  } \right]} \right)$, so  $\Pi _L  \circ \psi _{\lambda ,L} :k\left[ {S_{r,L} ^\lambda  } \right] \to C_L ^\lambda  $ is surjective.  
\end{proof}

  In the next section we will show that $\Pi _L  \circ \psi _{\lambda ,L} :k\left[ {S_{r,L} ^\lambda  } \right] \to C_L ^\lambda  $ is actually an isomorphism of algebras, while in many cases $\Pi _R  \circ \psi _{\lambda ,R} $ restricted to an appropriate subalgebra of $k\left[ {S_{r,R} ^\lambda  } \right]$ is an algebra isomorphism onto $C_R ^\lambda  $.  We can then index the irreducible representations of each $C_X ^\lambda  $ and hence of $B_X $.

\section{Irreducible representations of $C_X ^\lambda$}

We need some technical lemmas to help determine the kernel of $\Pi _X $.  Work first with $X = L$.  For any partition $\lambda  \in \Lambda ^ + $, we can regard $G_{\lambda _i }  = \mathfrak{S}_{\lambda _i } $ as a subgroup of $G_\lambda   = \mathfrak{S}_\lambda  $ by letting $\sigma  \in G_{\lambda _i } $ act as the identity on all blocks $b_j ^\lambda  \,,\,j \ne i$, while $\sigma \left( {a_k } \right) = a_{\sigma \left( k \right)} $ if $a_k $ represents the $k$th element in the block $b_i ^\lambda  $.  Then $G_\lambda  $ is a direct product of disjoint subgroups $G_\lambda   = \prod\limits_i {G_{\lambda _i } } $.  Let $\nu  \in \Lambda $ be a composition and let $\alpha  \in S_{r,L} ^\lambda$.  Put $G_{L,i} \left( {\nu ,\alpha ,\lambda } \right) = \left\{ {\pi  \in G_{\lambda _i } :\exists \sigma  \in G_\nu  {\text{ such that }}\alpha \pi  = \sigma \alpha } \right\}$.  This gives a family of disjoint subgroups of $G_L \left( {\nu ,\alpha ,\lambda } \right)$.

\begin{lemma}   \label{l8.1}
 Let $B_1 $ be the union of all $\lambda $-blocks of size $1$ and $B_2 $
 be the union of all $\lambda $-blocks of size greater than $1$.  Suppose $\alpha  \in S_r $ restricted to $B_2 $ is one to one and that $\alpha \left( {B_1 } \right) \cap \alpha \left( {B_2 } \right) = \emptyset $.  Then $G_L \left( {\nu ,\alpha ,\lambda } \right) = \mathop \Pi \limits_i G_{L,i} \left( {\nu ,\alpha ,\lambda } \right)$.
\end{lemma}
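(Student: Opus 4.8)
The plan is to prove the asserted equality by two inclusions, with the reverse inclusion $\prod_i G_{L,i}(\nu,\alpha,\lambda) \subseteq G_L(\nu,\alpha,\lambda)$ being essentially formal and the forward inclusion carrying all the content. For the reverse inclusion I would first note that $G_L(\nu,\alpha,\lambda)$ is a subgroup of $G_\lambda$ (closure under products and inverses follows by composing and inverting the witnessing elements $\sigma \in G_\nu$), and that each $G_{L,i}(\nu,\alpha,\lambda)$ is contained in it, since $G_{\lambda_i} \subseteq G_\lambda$ and the defining condition is identical. Because the $G_{L,i}$ lie in the disjoint direct factors $G_{\lambda_i}$ of $G_\lambda = \prod_i G_{\lambda_i}$, they pairwise commute and intersect trivially, so their internal product $\prod_i G_{L,i}$ is just the subgroup they generate, hence contained in the group $G_L(\nu,\alpha,\lambda)$.

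For the forward inclusion, take $\pi \in G_L(\nu,\alpha,\lambda)$ with a witness $\sigma \in G_\nu$ satisfying $\alpha\pi = \sigma\alpha$, and write $\pi = \prod_i \pi_i$ with $\pi_i \in G_{\lambda_i}$ its unique factorization in the direct product. I must produce, for each $i$, an element $\sigma_i \in G_\nu$ with $\alpha\pi_i = \sigma_i\alpha$. For a block $b_i^\lambda$ of size $1$ there is nothing to do, since $G_{\lambda_i}$ is trivial and $\pi_i = \mathrm{id} \in G_{L,i}(\nu,\alpha,\lambda)$; so I may assume $b_i^\lambda \subseteq B_2$. Reading off $\alpha\pi_i = \sigma_i\alpha$ column by column, the required $\sigma_i$ must agree with $\sigma$ on $J_i := \alpha(b_i^\lambda) \cap \{1,\dots,r\}$ (because $\pi_i = \pi$ on $b_i^\lambda$ and $\alpha\pi = \sigma\alpha$ there) and must fix every nonzero value $\alpha(j)$ with $j \notin b_i^\lambda$ (because $\pi_i$ fixes such $j$).

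The key step, and the only place the hypotheses enter, is to show these two requirements do not conflict, i.e. that $J_i$ is disjoint from the set $K_i$ of nonzero images of the complement of $b_i^\lambda$. Since $b_i^\lambda \subseteq B_2$, injectivity of $\alpha$ on $B_2$ forces $\alpha(b_i^\lambda)$ and $\alpha(B_2 \setminus b_i^\lambda)$ to have disjoint nonzero parts, while the hypothesis $\alpha(B_1)\cap\alpha(B_2) = \emptyset$ gives $\alpha(b_i^\lambda)\cap\alpha(B_1) = \emptyset$; together these yield $J_i \cap K_i = \emptyset$. I would then check $\sigma(J_i) = J_i$: for $j \in b_i^\lambda$ we have $\pi(j) \in b_i^\lambda$, so $\sigma(\alpha(j)) = \alpha(\pi(j)) \in \alpha(b_i^\lambda)$, and a nonzero value cannot be sent to $0$, so $\sigma$ restricts to a permutation of $J_i$. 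Defining $\sigma_i$ to equal $\sigma$ on $J_i$ and the identity on $\{1,\dots,r\}\setminus J_i$ then gives a genuine permutation; it lies in $G_\nu$ because on each $\nu$-block $B$ it sends $B\cap J_i$ into itself (as $\sigma(B)=B$ and $\sigma(J_i)=J_i$) and fixes $B\setminus J_i$, hence preserves $B$. Finally one verifies $\alpha\pi_i = \sigma_i\alpha$ directly on the relevant columns ($j\in b_i^\lambda$, $j\notin b_i^\lambda$, and the cases where $\alpha(j)=0$), using $J_i \cap K_i = \emptyset$ to handle $j \notin b_i^\lambda$. This shows $\pi_i \in G_{L,i}(\nu,\alpha,\lambda)$, and combining over all $i$ gives $\pi \in \prod_i G_{L,i}(\nu,\alpha,\lambda)$.

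I expect the main obstacle to be precisely the disjointness $J_i \cap K_i = \emptyset$: without it one could not simultaneously make $\sigma_i$ imitate $\sigma$ on the image of block $b_i^\lambda$ and fix the image of the complement, so no single $\sigma_i \in G_\nu$ would witness $\pi_i$. The two hypotheses of the lemma are tailored exactly to supply this disjointness, so the bulk of the work is organizing the columnwise verification around it rather than any delicate computation.
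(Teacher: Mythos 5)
Your proof is correct and follows essentially the same route as the paper's: factor $\pi$ as $\prod_i \pi_i$ blockwise, show $\sigma$ permutes the image set of each block, and define $\sigma_i$ to be $\sigma$ on that image set and the identity elsewhere, with the two hypotheses entering exactly to guarantee that the image of $b_i^\lambda$ is disjoint from the image of its complement. Your write-up is in fact a bit more explicit than the paper's on that disjointness point and on the role of the value $0$, but the argument is the same.
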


\begin{proof}
 Given $\pi  \in G_L \left( {\nu ,\alpha ,\lambda } \right) \subseteq G_\lambda  $, we must write $\pi  = \mathop \Pi \limits_i \pi _i $ where $\pi _i  \in G_{L,i} \left( {\nu ,\alpha ,\lambda } \right) \subseteq G_{\lambda _i } $.  Define $\pi _i \left( j \right) = \left\{ {\begin{array}{*{20}c}
   {\pi \left( j \right){\text{ if }}j \in b_i ^\lambda  }  \\
   {j{\text{ otherwise}}}  \\

 \end{array} } \right.$.  Then $\pi _i  \in G_{\lambda _i } $ and $\pi  = \mathop \Pi \limits_i \pi _i $.  Take $\sigma  \in G_\nu  $ such that $\alpha \pi  = \sigma \alpha $.  By hypothesis, $\alpha $ maps $b_i ^\lambda  $ one to one onto an image set $S_i $ of the same size and if the size $\lambda _i  > 1$ then $\alpha ^{ - 1} \left( {S_i } \right) = b_i ^\lambda  $.  We claim $\sigma $ maps each image set $S_i $ to itself:  take any $k = \alpha \left( l \right) \in S_i $ where  $l \in b_i ^\lambda  $.  Then $\sigma \left( k \right) = \sigma \alpha \left( l \right) = \alpha \pi \left( l \right) \in S_i $ since $\pi  \in G_\lambda  \Rightarrow \pi \left( l \right) \in b_i ^\lambda  $.  So we can define a permutation $\sigma _i  \in \mathfrak{S}_r $ by $\sigma _i (k) = \left\{ {\begin{array}{*{20}c}   {\sigma \left( k \right){\text{ if }}k \in S_i }  \\   {k{\text{ otherwise}}}  \\
 \end{array} } \right.$.  $\sigma _i $ is the identity outside $S_i $ and therefore certainly maps any portion of a $\nu $ block outside $S_i $ to itself.  On the other hand, when restricted to $S_i $, $\sigma _i  = \sigma  \in G_\nu  $, so it must map any portion of a $\nu $ block inside $S_i $ to itself.  So $\sigma _i  \in G_\nu  $.  Then since $\alpha \pi _i  = \sigma _i \alpha $, we have $\pi _i  \in G_{L,i} \left( {\nu ,\alpha ,\lambda } \right)$.  When the size $\lambda _i  = 1$, $\pi _i $ is the identity map.  If we take $\sigma _i $ to also be the identity, then we again have $\alpha \pi _i  = \sigma _i \alpha $ and $\pi _i  \in G_{L,i} \left( {\nu ,\alpha ,\lambda } \right)$.
\end{proof}

Notice that in particular that any $\alpha  \in S_{r,L} ^\lambda  $ satisfies the hypotheses of lemma \ref{l8.1}.  Also $G_L \left( {\lambda ,\alpha ,\lambda } \right) = G_\lambda  {\text{ and }}G_{L,i} \left( {\lambda ,\alpha ,\lambda } \right) = G_{\lambda _i } $ for $\alpha  \in S_{r,L} ^\lambda  $.

\begin{lemma}    \label{l8.2}
  If an element $x \in \ker \left( {\Pi _L } \right) \subseteq B_L ^\lambda  $ is expanded in terms of the double coset basis $\{ f\left( {\lambda ,D,\lambda } \right):D \in \,_\lambda  M_\lambda  \} $ for $B_L ^\lambda  $, $x = \sum\limits_{D_\alpha   \in \,_\lambda  M_\lambda  } {c_\alpha  f\left( {\lambda ,\alpha ,\lambda } \right)} $, then the coefficient $c_\gamma  $ for any $f\left( {\lambda ,\gamma ,\lambda } \right)$ with $\gamma  \in S_{r,L} ^\lambda  $ will be 0.
\end{lemma}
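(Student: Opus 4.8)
The plan is to reduce the claim to a $p$-divisibility statement about a single structure constant, and then extract that divisibility from the same arithmetic of $p$-power blocks used in Theorem~\ref{t7.1}. First I would record that $\ker(\Pi_L)=1_\lambda B_L e_\lambda B_L 1_\lambda$ is spanned over $k$ by the products $f(\lambda,\beta,\mu)*_L f(\mu,\delta,\lambda)$ with $\mu<\lambda$ and $\beta,\delta\in S_r$: writing any $x\in\ker(\Pi_L)$ as $1_\lambda\bigl(\sum_i u_i 1_{\mu_i} v_i\bigr)1_\lambda$ with each $\mu_i<\lambda$ and inserting the idempotents $1_\lambda$ and $1_{\mu_i}$ exhibits $x$ as a sum of such products. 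Since $c_\gamma$ depends $k$-linearly on $x$, it suffices to show that for each such generator the coefficient of $f(\lambda,\gamma,\lambda)$, $\gamma\in S_{r,L}^\lambda$, vanishes in $k$.

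Next I would compute that coefficient from the alternative form of the $*_L$ rule. For $\gamma\in S_{r,L}^\lambda$ the note following Lemma~\ref{l8.1} gives $G_L(\lambda,\gamma,\lambda)=G_\lambda$, so $n_L(D_\gamma)=o(G_\lambda)$, and the coefficient becomes the integer $[G_\lambda:G_L(\mu,\delta,\lambda)]\,a_{L,D_\gamma}$, where $a_{L,D_\gamma}$ counts the left $G_L(\lambda,\beta,\mu)$-cosets $\rho$ in $G_\mu$ with $\beta\rho\delta\in D_\gamma$. If $a_{L,D_\gamma}=0$ there is nothing to prove, so the whole statement reduces to showing $p\mid[G_\lambda:G_L(\mu,\delta,\lambda)]\,a_{L,D_\gamma}$ whenever this factorisation count is positive.

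The arithmetic heart is then as follows. Assuming $a_{L,D_\gamma}>0$, I would fix $\rho_0\in G_\mu$ with $\gamma_0:=\beta\rho_0\delta\in D_\gamma$ and, after replacing $\delta$ by a convenient representative of its double coset (as in Lemmas~\ref{l7.3} and~\ref{l7.4}), use that $\gamma\in S_{r,L}^\lambda$ maps every $\lambda$-block bijectively onto a $\lambda$-block of the same size (Corollary~\ref{c7.2}) to control how $\delta$ meets the blocks. Because $\mu<\lambda$, some block $b_k^\lambda$ of $p$-power size $p^k$ is effectively broken up, and the stabiliser $G_L(\mu,\delta,\lambda)\subseteq G_\lambda$ is forced into a proper Young-type subgroup that splits $b_k^\lambda$ into at least two nonempty parts. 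The index of such a subgroup in the symmetric group $\mathfrak{S}_{p^k}$ of that block is a multinomial coefficient $\binom{p^k}{a_1,\dots,a_m}$ with $m\ge 2$ and each $a_i<p^k$, which is $\equiv 0\pmod p$ — precisely the computation used, in the opposite direction, for the constant $c$ in Theorem~\ref{t7.1}. Hence $p\mid[G_\lambda:G_L(\mu,\delta,\lambda)]$ and the coefficient is $0$ in $k$.

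The step I expect to be the main obstacle is this last one: justifying that a nonzero $a_{L,D_\gamma}$ genuinely forces the splitting of a size-$p^k$ block. For a general composition $\mu$ with $\mu^+<\lambda^+$ the group $G_\mu$ need neither be contained in $G_\lambda$ nor refine $\lambda$, so the divisibility cannot come from $\mu$ in isolation; it must be extracted from the compatibility imposed by $\beta\rho_0\delta\in D_\gamma$ together with the full $G_\lambda$-symmetry of $\gamma$. I would handle this block-by-block: Lemma~\ref{l8.1} factors $G_L(\mu,\delta,\lambda)=\prod_i G_{L,i}(\mu,\delta,\lambda)$ over the $\lambda$-blocks for the relevant $\delta$, so the index factors over blocks and it is enough to locate one block on which $\mu$, pulled back through $\delta$ and $\gamma_0$, induces a strictly finer partition than $\lambda$; the $p$-power size of that block then supplies the divisible multinomial. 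Verifying that at least one such block must exist, and that the reindexing by $\gamma_0$ preserves block sizes, is the technical crux of the argument.
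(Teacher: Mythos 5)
Your reduction and your computation of the coefficient are exactly the paper's: you decompose $\ker(\Pi_L)$ into products $f(\lambda,\beta,\mu)*_Lf(\mu,\delta,\lambda)$ with $\mu<\lambda$, read off the coefficient of $f(\lambda,\gamma,\lambda)$ from the alternative multiplication rule as $\tfrac{n_L(D_\gamma)}{n_L(\mu,\delta,\lambda)}\,a_{L,D_\gamma}=[G_\lambda:G_L(\mu,\delta,\lambda)]\,a_{L,D_\gamma}$, factor the stabilizer over the $\lambda$-blocks by Lemma \ref{l8.1}, and aim to exhibit one block of size $p^t>1$ that is split into two nonempty invariant pieces, so that the corresponding factor of the index is a multiple of a binomial coefficient $\binom{p^t}{a_1}\equiv 0\bmod p$. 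That is precisely the paper's proof. However, the step you explicitly defer --- verifying that at least one such block must be split --- is the one substantive point of the lemma, and your proposal does not supply it, so as written the argument is incomplete.

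Here is how the paper closes that gap, and why your worry that $G_\mu$ need not refine $\lambda$ is resolved. Suppose $a_{L,D_\gamma}\ne 0$, so $\gamma=\beta\rho\delta$ for some $\rho\in G_\mu$ (after adjusting $\beta,\delta$ within their double cosets, which changes no stabilizer orders). Let $B_2$ be the union of the $\lambda$-blocks of size $>1$. Since $\gamma\in S_{r,L}^\lambda$ permutes those blocks bijectively, the \emph{second} factor $\delta$ must be injective on $B_2$ with $0\notin\delta(B_2)$ and $\delta(B_1)\cap\delta(B_2)=\emptyset$; this is what makes Lemma \ref{l8.1} applicable to $G_L(\mu,\delta,\lambda)$, and it is also what forces the splitting. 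Indeed, if every $i$ lying in a $\lambda$-block of size $>1$ had $\delta(i)$ in a $\mu$-block of size at least that of its own block, then for every $s>1$ injectivity of $\delta$ on $B_2$ would give $\#\{j:\ j\ \text{in a}\ \lambda\text{-block of size}\ \geqslant s\}\leqslant\#\{j:\ j\ \text{in a}\ \mu\text{-block of size}\ \geqslant s\}$; taking $s=\lambda^+_k$, where $k$ is the first index at which $\mu^+$ and $\lambda^+$ differ (necessarily $\lambda^+_k>1$, since $\mu^+_k<\lambda^+_k$ and both are compositions of $r$), contradicts $\mu^+<\lambda^+$. Hence some $\lambda$-block $b_k^\lambda$ of size $p^t>1$ contains an $i$ with $\delta(i)$ in a $\mu$-block $b_j^\mu$ of size $<p^t$; then $A_1=b_k^\lambda\cap\delta^{-1}(b_j^\mu)$ and $A_2=b_k^\lambda\setminus A_1$ are both nonempty and are preserved by $G_{L,k}(\mu,\delta,\lambda)$, which is exactly the splitting you need. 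Note that the split block is located entirely on the side of the second factor $\delta$, using only $\delta$, $\mu$ and $\lambda$; the ``reindexing by $\gamma_0$'' you mention plays no role, and no representative of $D_\gamma$ other than $\gamma$ itself is needed.
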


\begin{proof}
   Since $x \in \ker \left( {\Pi _L } \right) \subseteq B_L ^\lambda  e_\lambda  B_L ^\lambda  $, $x$ will be a $k$ linear combination of terms of the form  $f\left( {\lambda ,\alpha ,\nu } \right) * _L f\left( {\nu ,\beta ,\lambda } \right)$ for some $\nu $ with $\nu  < \lambda $.  Then by the alternative form of the multiplication rule in section 3, $f\left( {\lambda ,\alpha ,\nu } \right) * _L f\left( {\nu ,\beta ,\lambda } \right) = \sum\limits_{D \in \,_\lambda  M_\lambda  } {\frac{{n_L (D)\,\,a_{L,D} }}
{{n_L \left( {\nu ,\beta ,\lambda } \right)}}f\left( {\lambda ,D,\lambda } \right)} $ for certain integers $a_{L,D}  \in k$.  Then the coefficient of any basis element $f\left( {\lambda ,\gamma ,\lambda } \right)$ in the expansion of $x$ will be a $k$ linear combination of terms $\frac{{n_L \left( {\lambda ,\gamma ,\lambda } \right)}}
{{n_L \left( {\nu ,\beta ,\lambda } \right)}}$.  We will show that $\frac{{n_L \left( {\lambda ,\gamma ,\lambda } \right)}}
{{n_L \left( {\nu ,\beta ,\lambda } \right)}} \equiv 0\,\,\bmod (p)$
 whenever $\gamma  \in S_{r,L} ^\lambda  $ and $a_{L,D_\gamma  }  \ne 0$.  Then $c_\gamma   = 0$ in $k$ as desired.

   If $a_{L,D_\gamma  }  \ne 0$, then $\alpha \rho \beta  = \gamma $ for some $\rho  \in G_\nu  $.  Then if $\gamma  \in S_{r,L} ^\lambda  $ , $\beta $ restricted to $B_2 $ will be one to one and $\beta \left( {B_1 } \right) \cap \beta \left( {B_2 } \right) = \emptyset $ where $B_1 ,B_2 $ are as in lemma \ref{l8.1}.  Then by lemma \ref{l8.1} we have  $G_L \left( {\nu ,\beta ,\lambda } \right) = \mathop \Pi \limits_i G_{L,i} \left( {\nu ,\beta ,\lambda } \right)$.  As remarked after lemma \ref{l8.1}, we also have $G_L \left( {\lambda ,\gamma ,\lambda } \right) = G_\lambda  {\text{ and }}G_{L,i} \left( {\lambda ,\gamma ,\lambda } \right) = G_{\lambda _i } $ since $\gamma  \in S_{r,L} ^\lambda  $.  Let $n_{L,i} \left( {\lambda ,\gamma ,\lambda } \right) = o\left( {G_{L,i} \left( {\lambda ,\gamma ,\lambda } \right)} \right) = o\left( {G_{\lambda _i } } \right)$ and $n_{L,i} \left( {\nu ,\beta ,\lambda } \right) = o\left( {G_{L,i} \left( {\nu ,\beta ,\lambda } \right)} \right)$.  Since $G_{L,i} \left( {\nu ,\beta ,\lambda } \right)$ is a subgroup of $G_{L,i} \left( {\lambda ,\gamma ,\lambda } \right) = G_{\lambda _i } $, $n_{L,i} \left( {\nu ,\beta ,\lambda } \right)$ divides $n_{L,i} \left( {\lambda ,\gamma ,\lambda } \right)$.  Then we can write  $\frac{{n_L \left( {\lambda ,\gamma ,\lambda } \right)}}{{n_L \left( {\nu ,\beta ,\lambda } \right)}} = \mathop \Pi \limits_i \frac{{n_{L,i} \left( {\lambda ,\gamma ,\lambda } \right)}}
{{n_{L,i} \left( {\nu ,\beta ,\lambda } \right)}}$.  If we can show $n_i  \equiv \frac{{n_{L,i} \left( {\lambda ,\gamma ,\lambda } \right)}}
{{n_{L,i} \left( {\nu ,\beta ,\lambda } \right)}} \equiv 0\,\,\bmod \left( p \right)$ for at least one $i$, the proof will be complete.

     Since $\beta $ is one to one on $B_2 $ and $\nu  < \lambda $, there must be some $\lambda $-block $b_k ^\lambda  $ of size $\lambda _k  = p^t  > 1$ and an element $i \in b_k ^\lambda  $ such that $\beta \left( i \right) \in b_j ^\nu  $ where the size $\nu _j  < \lambda _k $.  Let $A_1  = b_k ^\lambda   \cap \beta ^{ - 1} \left( {b_j ^\nu  } \right)$, $A_2  = b_k ^\lambda   - A_1 $, $a_i  = \# A_i $.  Then $1 \leqslant a_1  \leqslant \nu _j  < \lambda _k  = p^t $ and $a_1  + a_2  = \# b_k ^\lambda   = \lambda _k  = p^t $.  For any $\pi  \in G_{L,k} \left( {\nu ,\beta ,\lambda } \right)$ we have $\pi \left( {b_k ^\lambda  } \right) = b_k ^\lambda  $.  Also there exists $\sigma  \in G_\nu  $ such that $\sigma \beta  = \beta \pi $.  Then for any $i \in \beta ^{ - 1} \left( {b_j ^\nu  } \right)$ we have $\beta \pi \left( i \right) = \sigma \beta \left( i \right) \in b_j ^\nu  $, so $\pi \left( {\beta ^{ - 1} \left( {b_j ^\nu  } \right)} \right) = \beta ^{ - 1} \left( {b_j ^\nu  } \right)$.  Then $\pi \left( {A_i } \right) = A_i \,,\,i = 1,2$.  This means $G_{L,k} \left( {\nu ,\beta ,\lambda } \right)$ lies in a subgroup $\mathfrak{S}_{A_i }  * \mathfrak{S}_{A_2 } $ of $\mathfrak{S}_{\lambda _k } $ of order $a_1 !\,a_2 !$.  Then we have $a_1 !\,a_2 ! = o\left( {G_{L,k} \left( \nu ,\beta ,\lambda \right)} \right) \cdot d$ for some integer $d$.   Also recall that $o\left( {\mathfrak{S}_{\lambda _k } } \right) = \lambda _k ! = p^t !$.  Then compute $n_k  = \frac{{o\left( {\mathfrak{S}_{\lambda _k } } \right)}}{{o\left( {G_{L,k} \left( {\nu ,\beta ,\lambda } \right)} \right)}} = \frac{{p^t !}}
{{a_1 !a_2 !/d}} = d \cdot \frac{{p^t !}}
{{a_1 !\left( {p^t  - a_1 } \right)!}} = d \cdot \left( {\begin{array}{*{20}c}
   {p^t }  \\
   {a_1 }  \\

 \end{array} } \right)$.  Since $0 < a_1  < p^t $, the binomial coefficient $\left( {\begin{array}{*{20}c}
   {p^t }  \\
   {a_1 }  \\

 \end{array} } \right) = 0\bmod p$.  So $n_k  = 0\bmod p$ as desired, and the proof of lemma \ref{l8.2} is complete.
\end{proof}

\begin{proposition}   \label{p8.1}
  The map $\Pi _L  \circ \psi _{\lambda ,L} :k\left[ {S_{r,L} ^\lambda  } \right] \to C_L ^\lambda  $ is an isomorphism of algebras.  Thus as a $k$-algebra, $C_L ^\lambda  $ is isomorphic to a tensor product of monoid algebras, $C_L ^\lambda   \cong k\left[ {S_0 } \right] \otimes \left( {\mathop  \otimes \limits_{i > 0} \left( {k\left[ {\mathfrak{S}_{s_i } } \right]} \right)} \right)$, where $S_0 $ is some submonoid of $\bar \tau _{s_0 } $ containing $\mathfrak{S}_{s_0 } $.  For $S_r  = \mathfrak{S}_r \,,\,\tau _r \,,\,\Re _r \,,\,\bar \tau _r $ we have $S_0  = \mathfrak{S}_{s_0 } \,,\,\tau _{s_0 } \,,\,\Re _{s_0 } \,,\,\bar \tau _{s_0 } $ respectively.
\end{proposition}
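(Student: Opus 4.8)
The plan is to prove that $\Pi_L \circ \psi_{\lambda,L}$ is a bijection. It is already known to be a $k$-algebra homomorphism, being the composite of the algebra map $\psi_{\lambda,L}$ of Proposition \ref{p7.2} with the quotient homomorphism $\Pi_L$, and it is surjective by Proposition \ref{p7.4}. So the only thing left to establish is injectivity, after which I would rewrite the source $k[S_{r,L}^\lambda]$ as a tensor product.

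For injectivity, suppose $x \in k[S_{r,L}^\lambda]$ lies in the kernel, that is $\psi_{\lambda,L}(x) \in \ker(\Pi_L)$, and write $x = \sum_{\gamma \in S_{r,L}^\lambda} c_\gamma \gamma$, so that $\psi_{\lambda,L}(x) = \sum_{\gamma \in S_{r,L}^\lambda} c_\gamma f(\lambda,\gamma,\lambda)$. As noted just before Proposition \ref{p7.1}, distinct elements of $S_{r,L}^\lambda \subseteq S_r^\lambda$ lie in distinct double cosets, so this is exactly the expansion of the kernel element $\psi_{\lambda,L}(x)$ in the double coset basis of $B_L^\lambda$, with support contained in $\{D_\gamma : \gamma \in S_{r,L}^\lambda\}$. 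But Lemma \ref{l8.2} says precisely that the coefficient of every $f(\lambda,\gamma,\lambda)$ with $\gamma \in S_{r,L}^\lambda$ in a kernel element must vanish. Hence each $c_\gamma = 0$, so $\psi_{\lambda,L}(x) = 0$, and since $\psi_{\lambda,L}$ is injective we get $x = 0$. Combined with surjectivity from Proposition \ref{p7.4}, this yields the isomorphism $k[S_{r,L}^\lambda] \cong C_L^\lambda$.

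It then remains to decompose the source algebra. Using the injective semigroup homomorphism $\phi_\lambda$, I would identify $S_{r,L}^\lambda$ with the preimage $\phi_\lambda^{-1}(S_r) \cap (\bar\tau_{s_0} \cdot \prod_{i>0} \mathfrak{S}_{s_i})$. The key observation is that this preimage is a direct product: for each $i > 0$ an element $\alpha_i \in \mathfrak{S}_{s_i}$ produces under $\phi_\lambda$ a genuine block bijection of the size-$p^i$ blocks, which lies in $S_r$ automatically because $\mathfrak{S}_r \subseteq S_r$; so the requirement $\phi_\lambda(\prod_i \alpha_i) \in S_r$ constrains only the factor $\alpha_0 \in \bar\tau_{s_0}$ acting on the size-$1$ blocks. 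Setting $S_0 = \{\alpha_0 \in \bar\tau_{s_0} : \phi_\lambda(\alpha_0) \in S_r\}$, we obtain $S_{r,L}^\lambda \cong S_0 \times \prod_{i>0} \mathfrak{S}_{s_i}$ as monoids, with $\mathfrak{S}_{s_0} \subseteq S_0 \subseteq \bar\tau_{s_0}$ (the lower containment again from $\mathfrak{S}_r \subseteq S_r$). Since the monoid algebra of a direct product is the tensor product of the factor algebras, this gives $C_L^\lambda \cong k[S_0] \otimes \bigotimes_{i>0} k[\mathfrak{S}_{s_i}]$.

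Finally I would read off $S_0$ in the four main examples by checking which maps on the size-$1$ blocks keep $\phi_\lambda(\alpha_0)$ inside $S_r$: for $\bar\tau_r$ there is no constraint, giving $S_0 = \bar\tau_{s_0}$; for $\tau_r$ one needs $\alpha_0$ to send nothing to $0$, giving $S_0 = \tau_{s_0}$; for $\Re_r$ one needs $\alpha_0$ to be injective on its support, giving $S_0 = \Re_{s_0}$; and for $\mathfrak{S}_r$ one needs $\alpha_0$ to be a permutation, giving $S_0 = \mathfrak{S}_{s_0}$. The substantive difficulty has already been absorbed into Lemma \ref{l8.2}, on which injectivity rests; once that is granted, the main remaining care is in verifying that the preimage factors honestly as a direct product and in correctly identifying $S_0$ for each $S_r$.
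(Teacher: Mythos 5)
Your proposal is correct and follows essentially the same route as the paper: surjectivity from Proposition \ref{p7.4}, injectivity by combining Lemma \ref{l8.2} (no kernel element of $\Pi_L$ has support on $\{D_\gamma : \gamma \in S_{r,L}^\lambda\}$) with the injectivity of $\psi_{\lambda,L}$ from Proposition \ref{p7.2}, and then the identification of $k[S_{r,L}^\lambda]$ with a tensor product via the direct-product decomposition of the monoid. The only difference is that you spell out the factorization $S_{r,L}^\lambda \cong S_0 \times \prod_{i>0}\mathfrak{S}_{s_i}$ and the determination of $S_0$ in the four examples, which the paper simply asserts.
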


\begin{proof}
  $\Pi _L  \circ \psi _{\lambda ,L} :k\left[ {S_{r,L} ^\lambda  } \right] \to C_L ^\lambda  $ is surjective by proposition \ref{p7.4}.  Also $\psi _{\lambda ,L} :k\left[ {S_{r,L} ^\lambda  } \right] \to B_L ^\lambda  $ is an injective algebra homorphism by proposition \ref{p7.2}.  But by lemma \ref{l8.2}, $\ker \left( {\Pi _L } \right) \cap image\left( {\psi _{\lambda ,L} } \right) = \left\{ 0 \right\}$, so $\ker \left( {\Pi _L  \circ \psi _{\lambda ,L} } \right) = \left\{ 0 \right\}$.  Then $\Pi _L  \circ \psi _{\lambda ,L} :k\left[ {S_{r,L} ^\lambda  } \right] \to C_L ^\lambda  $ is an isomorphism of algebras as claimed.

     As a monoid, $S_{r,L} ^\lambda  $ is isomorphic to a product monoid $S_0  \cdot \prod\limits_{i > 0} {\mathfrak{S}_{s_i } } $, with $S_0 $ as described, so $C_L ^\lambda   \cong k\left[ {S_{r,L} ^\lambda  } \right] \cong k\left[ {S_0 } \right] \otimes \left( {\mathop  \otimes \limits_{i > 0} \left( {k\left[ {\mathfrak{S}_{s_i } } \right]} \right)} \right)$.
\end{proof}

As shown in \cite{May1}, the (isomorphism classes of) irreducible representations of the tensor product $k\left[ {S_0 } \right] \otimes \left( {\mathop  \otimes \limits_{i > 0} \left( {k\left[ {\mathfrak{S}_{s_i } } \right]} \right)} \right)$ correspond one to one with a choice of (classes of) irreducible representations of $S_0 $ and each $\mathfrak{S}_{s_i } $.  The irreducible representations of $\mathfrak{S}_{s_i } $ correspond to $p$-regular partitions of $s_i $, while the irreducible representations of $S_0 $ correspond to $p$-regular partitions of $l$ for certain integers $0 \leqslant l \leqslant s_0 $.  We then get the following ``classification theorem'' for the irreducible representations of $B_L $:

\begin{thm}   \label{t8.1}
  Let $k$ be a field of characteristic $p$ and assume $S_r  = \mathfrak{S}_r \,,\,\tau _r $ or any monoid containing the rook monoid $\Re _r $.  There is one isomorphim class of irreducible $B_L $-modules for each choice of the following data:

1.  a decomposition $r = \sum\nolimits_{i \geqslant 0} {s_i p^i } $ for integers $s_i  \geqslant 0$, and

2.  a $p$-regular partition of $s_i $ for each $s_i  > 0\,,\,i > 0$, and

3.  a $p$-regular partition of $s_0 $ when $S_r  = \mathfrak{S}_r $, or

    an integer $j$ with $1 \leqslant j \leqslant s_0 $
 and a $p$-regular partition of $j$ when $S_r  = \tau _r $, or

     an integer $j$ with $0 \leqslant j \leqslant s_0 $
    and a $p$-regular partition of $j$ if $j > 0$ when $S_r $ contains   the rook monoid $\Re _r $.
\end{thm}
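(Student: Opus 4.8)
The plan is to assemble the theorem from the structural results already in hand, treating it as a bookkeeping argument rather than a fresh computation. First I would invoke the reduction machinery of Section 5: by Proposition \ref{p5.1} together with the surrounding discussion, every irreducible $B_L$-module lies at exactly one level $\lambda \in \Lambda^+(r)$, and the isomorphism classes at level $\lambda$ biject with the isomorphism classes of irreducible $C_L^\lambda$-modules. Hence it suffices to enumerate, for each partition $\lambda$, the irreducible $C_L^\lambda$-modules and to record which $\lambda$ contribute nonzero modules. By Theorem \ref{t7.1}, $C_L^\lambda = 0$ unless $\lambda$ is a $p$-partition, so only $p$-partitions matter. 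A $p$-partition of $r$ is determined by the integers $s_i = L(\lambda, p^i)$ subject to $\sum_{i \geq 0} s_i p^i = r$, and conversely any such tuple determines a unique $p$-partition; this is exactly the data of item 1, so the contributing levels biject with the decompositions $r = \sum_{i \geq 0} s_i p^i$.

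Next, for a fixed $p$-partition $\lambda$, Proposition \ref{p8.1} identifies $C_L^\lambda$ with the tensor product $k[S_0] \otimes \left( \bigotimes_{i > 0} k[\mathfrak{S}_{s_i}] \right)$, where $S_0 = \mathfrak{S}_{s_0}$, $\tau_{s_0}$, $\Re_{s_0}$, or $\bar\tau_{s_0}$ according to the choice of $S_r$. I would then apply the result of \cite{May1} that the irreducible modules of such a tensor product of monoid algebras over $k$ are parameterized by tuples consisting of one irreducible module from each tensor factor. The factors $k[\mathfrak{S}_{s_i}]$ with $i > 0$ each contribute a $p$-regular partition of $s_i$, since $\mathbb{F}_p$ is a splitting field for $\mathfrak{S}_{s_i}$ and the simple modules indexed by $p$-regular partitions exhaust the simples in characteristic $p$; this is precisely item 2. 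The remaining factor $k[S_0]$ is handled by the monoid representation theory of \cite{May1}: for $S_0 = \mathfrak{S}_{s_0}$ one again obtains $p$-regular partitions of $s_0$; for $S_0 = \tau_{s_0}$ one obtains a $p$-regular partition of some $j$ with $1 \leq j \leq s_0$; and for $S_0 = \Re_{s_0}$ or $\bar\tau_{s_0}$ (the cases where $S_r$ contains $\Re_r$) one obtains a $p$-regular partition of some $j$ with $0 \leq j \leq s_0$, the value $j = 0$ giving the empty partition. These are exactly the three alternatives in item 3, and combining the three layers of data yields the asserted parameterization.

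The genuinely substantive input — the structure theorem \ref{p8.1} and the representation theory of $\tau$, $\Re$, $\bar\tau$ and of tensor products — is already available, so no single step here is a serious obstacle. The points that require care are bookkeeping ones: confirming that $\lambda \mapsto (s_i)_i$ really is a bijection between $p$-partitions of $r$ and decompositions $r = \sum s_i p^i$ (immediate from Definition \ref{d7.1}), and, more delicately, matching the index ranges for $j$ in item 3 to the cited classification — in particular that $\tau_{s_0}$ excludes $j = 0$, having no element collapsing every point to $0$, while $\Re_{s_0}$ and $\bar\tau_{s_0}$ include it. I would also flag that the tensor-product decomposition of irreducibles presupposes that $k$ is a splitting field for each factor; this holds because $\mathbb{F}_p$ already splits the symmetric groups and the relevant monoids are treated over an arbitrary field $k$ in \cite{May1}. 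This is the one place where I would verify the hypotheses of the cited result rather than treat its conclusion as automatic.
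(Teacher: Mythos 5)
Your proposal is correct and follows essentially the same route as the paper: reduce via Proposition \ref{p5.1} to the algebras $C_L^\lambda$, discard non-$p$-partitions by Theorem \ref{t7.1}, apply the tensor-product description of Proposition \ref{p8.1}, and then quote the classification of irreducibles for the factors from \cite{May1}. Your added remarks on the bijection between $p$-partitions and decompositions $r=\sum s_i p^i$ and on the splitting-field hypothesis are sensible points of care but do not change the argument.
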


     Now turn to the case $B_R $.  Given a partition $\lambda  \in \Lambda ^ +  $, a composition $\nu  \in \Lambda $ and any $\alpha  \in S_r $, consider the subgroup $G_R \left( {\lambda ,\alpha ,\nu } \right) \subseteq \mathfrak{S}_\lambda  $.  Let $G_{R,i} \left( {\lambda ,\alpha ,\nu } \right) = G_R \left( {\lambda ,\alpha ,\nu } \right) \cap \mathfrak{S}_{\lambda _i } $.  Then $\prod\limits_i {G_{R,i} \left( {\lambda ,\alpha ,\nu } \right)} $ is a direct product of disjoint subgroups of $G_R \left( {\lambda ,\alpha ,\nu } \right)$.  Corresponding to lemma \ref{l8.1}, we have

\begin{lemma}    \label{l8.3}
  For any $\alpha  \in S_r $, $G_R \left( {\lambda ,\alpha ,\nu } \right) = \prod\limits_i {G_{R,i} \left( {\lambda ,\alpha ,\nu } \right)} $.
\end{lemma}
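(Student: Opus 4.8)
The plan is to prove the two inclusions separately, the harder one being $G_R(\lambda,\alpha,\nu) \subseteq \prod_i G_{R,i}(\lambda,\alpha,\nu)$. The reverse inclusion is immediate: the subgroups $G_{R,i}(\lambda,\alpha,\nu) = G_R(\lambda,\alpha,\nu)\cap\mathfrak{S}_{\lambda_i}$ act on the disjoint row-blocks $b_i^\lambda$, so their product is a direct subgroup of $G_\lambda$; since each factor lies in the group $G_R(\lambda,\alpha,\nu)$ and that set is closed under multiplication, the whole product lies in it.

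For the forward inclusion, I would take $\sigma \in G_R(\lambda,\alpha,\nu)$ and use the direct product decomposition $G_\lambda = \prod_i \mathfrak{S}_{\lambda_i}$ to write $\sigma = \prod_i \sigma_i$ with $\sigma_i \in \mathfrak{S}_{\lambda_i}$ acting only on the block $b_i^\lambda$. By definition there is a $\pi \in G_\nu$ with $\sigma\alpha = \alpha\pi$, that is, $\sigma(\alpha(j)) = \alpha(\pi(j))$ for all $j$. The goal is to produce, for each $i$, a $\pi_i \in G_\nu$ with $\sigma_i\alpha = \alpha\pi_i$, which then forces $\sigma_i \in G_R(\lambda,\alpha,\nu)\cap\mathfrak{S}_{\lambda_i} = G_{R,i}(\lambda,\alpha,\nu)$ and finishes the argument.

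The construction I have in mind pulls the decomposition back through $\alpha$ to the columns. Set $C_i = \alpha^{-1}(b_i^\lambda)$. First I would check that $\pi$ preserves each $C_i$: if $\alpha(j) \in b_i^\lambda$ then $\alpha(\pi(j)) = \sigma(\alpha(j)) \in b_i^\lambda$, because $\sigma \in G_\lambda$ fixes the block $b_i^\lambda$ setwise, so $\pi(j) \in C_i$; as $\pi$ is a bijection of a finite set this gives $\pi(C_i) = C_i$. Then I would define $\pi_i$ to agree with $\pi$ on $C_i$ and with the identity off $C_i$. A short computation splitting into the cases $\alpha(j) \in b_i^\lambda$ (where $\sigma_i$ and $\sigma$ agree on $\alpha(j)$) and $\alpha(j) \notin b_i^\lambda$ (where $\sigma_i$ fixes $\alpha(j)$ and $\pi_i$ fixes $j$) yields $\sigma_i\alpha = \alpha\pi_i$.

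The main point to verify — and the step that makes this cleaner than Lemma \ref{l8.1} — is that $\pi_i \in G_\nu$. This holds because $\pi_i$ preserves every $\nu$-block $b_j^\nu$: on $b_j^\nu \cap C_i$ it equals $\pi$, which keeps a point inside $b_j^\nu$ (as $\pi \in G_\nu$) and inside $C_i$ (just shown), hence inside $b_j^\nu \cap C_i$; on $b_j^\nu \setminus C_i$ it is the identity. The reason no hypothesis on $\alpha$ is needed here, in contrast to Lemma \ref{l8.1}, is that the sets $C_i = \alpha^{-1}(b_i^\lambda)$ are automatically pairwise disjoint as preimages of disjoint blocks, whereas the image sets $\alpha(b_i^\lambda)$ relevant to the left-handed case may overlap — which is precisely why that case required $\alpha$ to be injective on $B_2$ with $\alpha(B_1)\cap\alpha(B_2)=\emptyset$. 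I expect the only bookkeeping care to be the columns $j$ with $\alpha(j)=0$, which lie in none of the $C_i$ and on which $\pi_i$ is the identity, so they cause no trouble.
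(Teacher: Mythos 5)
Your argument is correct and is essentially the paper's own proof: the sets $C_i = \alpha^{-1}(b_i^\lambda)$ are exactly the paper's $A_i$, the factors $\pi_i$ are built the same way (equal to $\pi$ on $C_i$, identity elsewhere), and the verification that $\pi_i \in G_\nu$ and that $\sigma_i\alpha = \alpha\pi_i$ proceeds by the same case split. Your closing remark explaining why no injectivity hypothesis is needed here, unlike in Lemma \ref{l8.1}, is a nice observation but does not change the substance of the argument.
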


\begin{proof}
  Take any $\sigma  \in G_R \left( {\lambda ,\alpha ,\nu } \right) \subseteq \mathfrak{S}_\lambda  $ and write $\sigma  = \prod {\sigma _i } \,,\,\sigma _i  \in \mathfrak{S}_{\lambda _i } $, where $\sigma _i \left( j \right) = \left\{ {\begin{array}{*{20}c}
   {\sigma \left( j \right){\text{ if }}j \in b_i ^\lambda  }  \\
   {j{\text{ otherwise}}}  \\
 \end{array} } \right.$.  We claim each $\sigma _i  \in G_{R,i} \left( {\lambda ,\alpha ,\nu } \right)$, and therefore $\sigma  \in \prod\limits_i {G_{R,i} \left( {\lambda ,\alpha ,\nu } \right)} $
 proving the lemma.  To see that $\sigma _i  \in G_{R,i} \left( {\lambda ,\alpha ,\nu } \right)$, let $A_j  = \alpha ^{ - 1} \left( {b_j ^\lambda  } \right)\,,\,j = 1,2, \cdots ,r$, and $A_0  = \alpha ^{ - 1} \left( 0 \right) - \left\{ 0 \right\}$.  Then $A_j \,,\,0 \leqslant j \leqslant r$, gives a partition of $\left\{ {1,2, \cdots ,r} \right\}$
into disjoint subsets.  Since $\sigma  \in G_R \left( {\lambda ,\alpha ,\nu } \right)$, there exists $\pi  \in \mathfrak{S}_\nu  $ such that $\sigma \alpha  = \alpha \pi $.  Then $x \in A_j  \Rightarrow \alpha \pi \left( x \right) = \sigma \alpha \left( x \right) \in \sigma b_j ^\lambda   = b_j ^\lambda   \Rightarrow \pi \left( x \right) \in A_j $, so $\pi \left( {A_j } \right) = A_j $ for all $j$.  For $j = 1,2, \cdots ,r$, define $\pi _j  \in \mathfrak{S}_r {\text{ by }}\pi _j \left( x \right) = \left\{ {\begin{array}{*{20}c}
   {\pi \left( x \right)\,,\,x \in A_j }  \\
   {x\,\,\,,\,\,x \notin A_j }  \\
 \end{array} } \right.$.  Suppose $x$ is in a particular $\nu $-block $b_l ^\nu  $.  If $x \in A_j $, then $\pi _j \left( x \right) = \pi \left( x \right) \in b_l ^\nu  $ since $\pi  \in G_\nu  $.  On the other hand, if $x \notin A_j $, then $\pi _j \left( x \right) = x \in b_l ^\nu  $.  Since this is true for every $\nu $-block $b_l ^\nu  $, $\pi _j  \in G_\nu  $.   If $x \in A_i $, then $\alpha \left( x \right) \in b_i ^\lambda  $, so $\sigma _i \alpha (x) = \sigma \alpha \left( x \right) = \alpha \pi \left( x \right) = \alpha \pi _i \left( x \right)$.  On the other hand, if $x \notin A_i $, then $\alpha \left( x \right) \notin b_i ^\lambda  $ and $\sigma _i \alpha (x) = \alpha \left( x \right) = \alpha \pi _i \left( x \right)$.  So we have $\sigma _i \alpha  = \alpha \pi _i $ and $\sigma _i  \in G_{R,i} \left( {\lambda ,\alpha ,\nu } \right)$ as desired. 
\end{proof}

\begin{lemma}    \label{l8.4}
  Suppose $\gamma  \in S_r ^\lambda  $ has the following property:
For any factorization $\gamma  = \alpha \beta \,,\,\alpha ,\beta  \in S_r $ and any composition $\nu  < \lambda $, there exists an integer $i$ such that the size $\nu _j $ of the $\nu $-block $b_j ^\nu  $ containing $i$ is less than the size $\lambda _k $ of the $\lambda $-block $b_k ^\lambda  $ containing $\alpha \left( i \right)$.  Then if an element $x \in \ker \left( {\Pi _R } \right) \subseteq B_R ^\lambda  $ is expanded in terms of the double coset basis $\{ f\left( {\lambda ,D,\lambda } \right):D \in \,_\lambda  M_\lambda  \} $ for $B_R ^\lambda  $, $x = \sum\limits_{D_\alpha   \in \,_\lambda  M_\lambda  } {c_\alpha  f\left( {\lambda ,\alpha ,\lambda } \right)} $, then the coefficient $c_\gamma  $ for any $f\left( {\lambda ,\gamma ,\lambda } \right)$ with $\gamma  \in S_{r,L} ^\lambda  $ will be 0.
\end{lemma}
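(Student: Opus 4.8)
The plan is to mimic the proof of Lemma \ref{l8.2}, replacing $*_L$ by $*_R$, the alternative left-multiplication rule by the right one, and Lemma \ref{l8.1} by Lemma \ref{l8.3}; the combinatorial heart is again the congruence $\binom{p^t}{a}\equiv 0\pmod p$ for $0<a<p^t$. First I would reduce to a single product term. Since $x\in\ker(\Pi_R)=B_R^\lambda e_\lambda B_R^\lambda$, $x$ is a $k$-linear combination of terms $f(\lambda,\alpha,\nu)*_R f(\nu,\beta,\lambda)$ with $\nu<\lambda$. By the alternative right-multiplication rule the $f(\lambda,\gamma,\lambda)$-coefficient of such a term is $\frac{n_R(\lambda,\gamma,\lambda)\,a_{R,D_\gamma}}{n_R(\lambda,\alpha,\nu)}$. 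Because $\gamma\in S_r^\lambda$, the computation in the proof of Proposition \ref{p7.1} gives $G_R(\lambda,\gamma,\lambda)=\mathfrak{S}_\lambda$, hence $n_R(\lambda,\gamma,\lambda)=o(\mathfrak{S}_\lambda)$ and the coefficient is $\frac{o(\mathfrak{S}_\lambda)\,a_{R,D_\gamma}}{n_R(\lambda,\alpha,\nu)}$. It therefore suffices to show $\frac{o(\mathfrak{S}_\lambda)}{n_R(\lambda,\alpha,\nu)}\equiv 0\pmod p$ whenever $a_{R,D_\gamma}\neq 0$; summing the (integer) contributions over all terms then forces $c_\gamma=0$ in $k$.

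Next I would produce the factorization that activates the hypothesis. If $a_{R,D_\gamma}\neq 0$ there is $\rho\in G_\nu$ with $\alpha\rho\beta\in D_\gamma$, so $\gamma=\sigma\alpha\rho\beta\tau$ for some $\sigma,\tau\in G_\lambda$. Writing $\gamma=\alpha'\beta'$ with $\alpha'=\sigma\alpha$ and $\beta'=\rho\beta\tau$ exhibits $\gamma$ as a product in $S_r$. Since $\alpha'=\sigma\alpha$ with $\sigma\in G_\lambda$, one checks $G_R(\lambda,\alpha',\nu)=\sigma G_R(\lambda,\alpha,\nu)\sigma^{-1}$; as $\sigma$ preserves each $\lambda$-block, this conjugation also identifies $G_{R,l}(\lambda,\alpha',\nu)$ with $G_{R,l}(\lambda,\alpha,\nu)$ block by block, so all the relevant orders are unchanged and I may work with $\alpha'$ in place of $\alpha$. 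Applying the hypothesis to $\gamma=\alpha'\beta'$ yields an element $i$ whose $\nu$-block $b_j^\nu$ has size $\nu_j$ strictly smaller than the size $\lambda_k$ of the $\lambda$-block $b_k^\lambda$ containing $\alpha'(i)$. Since $\lambda$ is a $p$-partition (otherwise $C^\lambda=0$ by Theorem \ref{t7.1}) and $\lambda_k>\nu_j\geq 1$, we have $\lambda_k=p^t$ with $t\geq 1$.

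Now I would localize at the block $b_k^\lambda$. By Lemma \ref{l8.3}, $n_R(\lambda,\alpha',\nu)=\prod_l o\!\left(G_{R,l}(\lambda,\alpha',\nu)\right)$, and since each $G_{R,l}(\lambda,\alpha',\nu)\subseteq\mathfrak{S}_{\lambda_l}$ we obtain $\frac{o(\mathfrak{S}_\lambda)}{n_R(\lambda,\alpha',\nu)}=\prod_l\left[\mathfrak{S}_{\lambda_l}:G_{R,l}(\lambda,\alpha',\nu)\right]$, a product of positive integers; it is enough to show the factor at $l=k$ is divisible by $p$. The key point is that $G_{R,k}(\lambda,\alpha',\nu)$ acts intransitively on $b_k^\lambda$: for $\tau'\in G_{R,k}(\lambda,\alpha',\nu)$ there is $\pi\in G_\nu$ with $\tau'\alpha'=\alpha'\pi$, so $\tau'(\alpha'(i))=\alpha'(\pi(i))$ with $\pi(i)\in b_j^\nu$ because $\pi$ preserves $\nu$-blocks. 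Hence the $G_{R,k}$-orbit $O$ of the point $y_0=\alpha'(i)\in b_k^\lambda$ lies inside $\alpha'(b_j^\nu)$, a set of size at most $\nu_j<p^t=|b_k^\lambda|$, so $O$ is a proper nonempty subset. As $G_{R,k}(\lambda,\alpha',\nu)$ stabilizes $O$ setwise, it embeds in $\mathfrak{S}(O)\times\mathfrak{S}(b_k^\lambda\setminus O)$, whence its order divides $a!(p^t-a)!$ with $a=|O|$ satisfying $0<a<p^t$. Therefore $\left[\mathfrak{S}_{\lambda_k}:G_{R,k}(\lambda,\alpha',\nu)\right]$ is a multiple of $\binom{p^t}{a}\equiv 0\pmod p$, which gives $\frac{o(\mathfrak{S}_\lambda)}{n_R(\lambda,\alpha',\nu)}\equiv 0\pmod p$ and completes the reduction.

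I expect the main obstacle to be the intransitivity step: justifying cleanly that the relevant orbit is trapped inside $\alpha'(b_j^\nu)$, which rests on $\pi$ preserving $\nu$-blocks exactly as extracted in Lemma \ref{l8.3}, together with the bookkeeping that lets me swap the left factor $\alpha$ of the product term for the factor $\alpha'$ to which the hypothesis literally applies (the conjugacy of the block subgroups $G_{R,l}$ under $\sigma\in G_\lambda$).
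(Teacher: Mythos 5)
Your proposal is correct and follows essentially the same route as the paper's proof: reduce to the coefficient $\frac{n_R(\lambda,\gamma,\lambda)\,a_{R,D}}{n_R(\lambda,\alpha,\nu)}$ via the alternative right multiplication rule, factor the index over blocks using Lemma \ref{l8.3}, invoke the hypothesis to find a block $b_k^\lambda$ of size $p^t$ meeting $\alpha'(b_j^\nu)$ in a proper nonempty invariant subset, and conclude via $\binom{p^t}{a}\equiv 0 \pmod p$. Your extra care in replacing $\alpha$ by $\alpha'=\sigma\alpha$ (the paper just says ``we can assume $\gamma=\alpha\rho\beta$'') and your use of the orbit of $\alpha'(i)$ in place of the paper's set $A_1=b_k^\lambda\cap\alpha(b_j^\nu)$ are only cosmetic variations on the same argument.
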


\begin{proof}
  Since $x \in \ker \left( {\Pi _R } \right) \subseteq B_R ^\lambda  e_\lambda  B_R ^\lambda  $, $x$ will be a $k$ linear combination of terms of the form  $f\left( {\lambda ,\alpha ,\nu } \right) * _R f\left( {\nu ,\beta ,\lambda } \right)$ for some $\nu $ with $\nu  < \lambda $.  Then by the alternative form of the multiplication rule in section 3, $f\left( {\lambda ,\alpha ,\nu } \right) * _R f\left( {\nu ,\beta ,\lambda } \right) = \sum\limits_{D \in \,_\lambda  M_\lambda  } {\frac{{n_R (D)\,\,a_{R,D} }}{{n_R \left( {\lambda ,\alpha ,\nu } \right)}}f\left( {\lambda ,D,\lambda } \right)} $ for certain integers $a_{R,D}  \in k$.  Then the coefficient of any basis element $f\left( {\lambda ,\gamma ,\lambda } \right)$ in the expansion of $x$ will be a $k$ linear combination of terms $\frac{{n_R \left( {\lambda ,\gamma ,\lambda } \right)}}{{n_R \left( {\lambda ,\alpha ,\nu } \right)}}$.  We will show that $\frac{{n_R \left( {\lambda ,\gamma ,\lambda } \right)}}
{{n_R \left( {\lambda ,\alpha ,\nu } \right)}} \equiv 0\,\,\bmod (p)$ whenever $\gamma  \in S_r ^\lambda  $ satisfies the hypothesis in the lemma and $a_{R,D_\gamma  }  \ne 0$.  Then $c_\gamma   = 0$ in $k$ for such $\gamma $ and the lemma is proved.

Notice that for any $\gamma  \in S_r ^\lambda  $, $G_R \left( {\lambda ,\gamma ,\lambda } \right) = G_\lambda   = \prod\limits_i {G_{\lambda _i } } $.  By lemma \ref{l8.3}, $G_R \left( {\lambda ,\alpha ,\nu } \right) = \prod\limits_i {G_{R,i} \left( {\lambda ,\alpha ,\nu } \right)} $ , where each $G_{R,i} \left( {\lambda ,\alpha ,\nu } \right)$ is a subgroup of $G_{\lambda _i } $.  Write $n_i \left( {\lambda ,\alpha ,\nu } \right) = o\left( {G_{R,i} \left( {\lambda ,\alpha ,\nu } \right)} \right)$, which must be a factor of $o\left( {G_{\lambda _i } } \right) = \lambda _i !$.  Then $\frac{{n_R \left( {\lambda ,\gamma ,\lambda } \right)}}
{{n_R \left( {\lambda ,\alpha ,\nu } \right)}} = \prod\limits_i {\frac{{o\left( {G_{\lambda _i } } \right)}}{{n_i \left( {\lambda ,\alpha ,\nu } \right)}}} $.  If we show that $n_i  \equiv \frac{{o\left( {G_{\lambda _i } } \right)}}{{n_i \left( {\lambda ,\alpha ,\nu } \right)}} \equiv 0\,\,\bmod (p)$ for at least one $i$, then $c_\gamma   = 0$ and we are done.

     If  $a_{R,D_\gamma  }  \ne 0$ we can assume $\gamma  = \alpha \rho \beta $ for some $\rho  \in \mathfrak{S}_\nu  $.  By our hypothesis, there will be an integer $i$ in a block $b_j ^\nu  $ of size $\nu _j  = p^s $ such that $\alpha \left( i \right) \in b_k ^\lambda  $ for some block $b_k ^\lambda  $ of size $\lambda _k  = p^t  > p^s $.  Let $A_{\,1}  = b_k ^\lambda   \cap \alpha \left( {b_j ^\nu  } \right)\,,\,A_{\,2}  = b_k ^\lambda   - A_{\,1} \,,\,a_i  = \# A_i $.  Then $1 \leqslant a_1  \leqslant p^s  < p^t $ and $a_1  + a_2  = \# b_k ^\lambda   = \lambda _k  = p^t $.  For any $\sigma  \in G_{R,k} \left( {\lambda ,\alpha ,\nu } \right) \subseteq G_\lambda  $ we have $\sigma \left( {b_k ^\lambda  } \right) = b_k ^\lambda  $.  Also, there exists $\pi  \in G_\nu  $ such that $\sigma \alpha  = \alpha \pi $, so $\sigma \left( {\alpha \left( {b_j ^\nu  } \right)} \right) = \alpha \pi \left( {b_j ^\nu  } \right) = \alpha \left( {b_j ^\nu  } \right)$.  Then $\sigma \left( {A_i } \right) = A_i \,,\,i = 1,2$.  This means $G_{R,k} \left( {\lambda ,\alpha ,\nu } \right)$ lies in a subgroup $\mathfrak{S}_{A_i }  * \mathfrak{S}_{A_2 } $ of $\mathfrak{S}_{\lambda _k } $ of order $a_1 !\,a_2 !$.  Then we have $a_1 !\,a_2 ! = o\left( {G_{R,k} \left( {\lambda ,\alpha ,\nu } \right)} \right) \cdot d$ for some integer $d$.   Also recall that $o\left( {\mathfrak{S}_{\lambda _k } } \right) = \lambda _k ! = p^t !$.  Then compute $n_k  = \frac{{o\left( {\mathfrak{S}_{\lambda _k } } \right)}}
{{o\left( {G_{R,k} \left( {\lambda ,\alpha ,\nu } \right)} \right)}} = \frac{{p^t !}}
{{a_1 !a_2 !/d}} = d \cdot \frac{{p^t !}}
{{a_1 !\left( {p^t  - a_1 } \right)!}} = d \cdot \left( {\begin{array}{*{20}c}
   {p^t }  \\
   {a_1 }  \\
 \end{array} } \right)$.  Since $0 < a_1  < p^t $, the binomial coefficient $\left( {\begin{array}{*{20}c}
   {p^t }  \\
   {a_1 }  \\
 \end{array} } \right) = 0\bmod p$.  So $n_k  = 0\bmod p$ as desired, and the proof of lemma \ref{l8.4} is complete.
\end{proof}

     We now consider several special cases of the monoid $S_r $. 
\\

\textbf{The case $S_r  = \mathfrak{S}_r $}
\\
 
 The simplest case is when $S_r  = \mathfrak{S}_r $ and $B_R  = S_k \left( {r,n} \right)$, the standard Schur algebra over $k$.   In this case every element of $S_r ^\lambda   \subseteq \mathfrak{S}_r $ must be one to one.  So $\phi _\lambda  ^{ - 1} \left( {S_r ^\lambda  } \right) = \prod\limits_{s_i  > 0} {\mathfrak{S}_{s_i } } $ and then (since $\phi _\lambda  $ is injective) $S_r ^\lambda   = \phi _\lambda  \left( {\prod\limits_{s_i  > 0} {\mathfrak{S}_{s_i } } } \right)$.

\begin{proposition}    \label{p8.2}
  For $S_r  = \mathfrak{S}_r $, the map $\Pi _R  \circ \psi _{\lambda ,R} :k\left[ {S_r ^\lambda  } \right] \to C_R ^\lambda  $ is an isomorphism of $k$-algebras.  There is a $k$-algebra isomorphism  $C_R ^\lambda   \cong \mathop  \otimes \limits_{s_i  > 0} \left( {k\left[ {\mathfrak{S}_{s_i } } \right]} \right).$
\end{proposition}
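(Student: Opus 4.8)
The plan is to follow the pattern of Proposition \ref{p8.1}, using Lemma \ref{l8.4} in place of Lemma \ref{l8.2}. By Proposition \ref{p7.1} the map $\psi_{\lambda,R}$ is an injective $k$-algebra homomorphism, and by Proposition \ref{p7.3} the composite $\Pi_R\circ\psi_{\lambda,R}$ is surjective; so the only thing left is injectivity of $\Pi_R\circ\psi_{\lambda,R}$, i.e.\ $\ker(\Pi_R)\cap\image(\psi_{\lambda,R})=\{0\}$. Since $\image(\psi_{\lambda,R})$ is the $k$-span of $\{f(\lambda,\gamma,\lambda):\gamma\in S_r^\lambda\}$ (distinct elements of $S_r^\lambda$ lie in distinct double cosets), it suffices to prove that every $\gamma\in S_r^\lambda$ satisfies the hypothesis of Lemma \ref{l8.4}. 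Granting this, any $x\in\ker(\Pi_R)$ has double-coset coefficient $c_\gamma=0$ for each $\gamma\in S_r^\lambda$, so an $x$ lying also in $\image(\psi_{\lambda,R})$ must be $0$. (For $S_r=\mathfrak{S}_r$ one checks $S_{r,L}^\lambda=S_r^\lambda$, so Lemma \ref{l8.4} applies verbatim.)

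The main obstacle is the combinatorial verification of that hypothesis. When $S_r=\mathfrak{S}_r$ every $\gamma\in S_r^\lambda$ is a permutation, and in a factorization $\gamma=\alpha\beta$ the factor $\alpha$ can be any element of $\mathfrak{S}_r$ (put $\beta=\alpha^{-1}\gamma$). Hence the hypothesis reduces to a $\gamma$-free statement: for every $\alpha\in\mathfrak{S}_r$ and every composition $\nu<\lambda$ there is an index $i$ whose $\nu$-block is strictly smaller than the $\lambda$-block containing $\alpha(i)$. I would argue by contradiction. If no such $i$ existed, then the $\nu$-block size of $i$ would be at least the $\lambda$-block size of $\alpha(i)$ for all $i$; comparing, for each threshold $t$, the number of indices lying in blocks of size $\geq t$ and using that $\alpha$ is a bijection, this forces $\sum_{\nu_j\geq t}\nu_j\geq\sum_{\lambda_k\geq t}\lambda_k$ for every $t$ (a Hall-type pigeonhole estimate).

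To finish I would contradict this inequality. Writing $P_\mu(t)=\sum_{\mu_i\geq t}\mu_i$, which depends only on $\mu^+$, I claim $\nu<\lambda$ (that is, $\nu^+<\lambda^+$ in the lexicographic order) forces $P_{\nu^+}(t)<P_{\lambda^+}(t)$ for at least one $t$, contradicting the estimate above. This I would prove by induction on $r$: if $\nu^+_1<\lambda^+_1$, take $t=\lambda^+_1$, so that $P_{\nu^+}(t)=0<P_{\lambda^+}(t)$; if $\nu^+_1=\lambda^+_1=a$, then the first parts contribute the same amount to $P_{\nu^+}(t)$ and $P_{\lambda^+}(t)$ for every $t$, so the difference $P_{\nu^+}(t)-P_{\lambda^+}(t)$ equals the corresponding difference for the partitions with first part deleted, to which the inductive hypothesis applies since these are lexicographically ordered partitions of $r-a$. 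This completes the proof that $\ker(\Pi_R)\cap\image(\psi_{\lambda,R})=\{0\}$, hence that $\Pi_R\circ\psi_{\lambda,R}$ is an isomorphism.

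The tensor-product description is then immediate: as remarked before the proposition, for $S_r=\mathfrak{S}_r$ every element of $S_r^\lambda$ is one to one, so $S_r^\lambda=\phi_\lambda\left(\prod_{s_i>0}\mathfrak{S}_{s_i}\right)\cong\prod_{s_i>0}\mathfrak{S}_{s_i}$ as groups, whence $C_R^\lambda\cong k[S_r^\lambda]\cong\bigotimes_{s_i>0}k[\mathfrak{S}_{s_i}]$.
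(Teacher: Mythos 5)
Your proposal is correct and follows essentially the same route as the paper: surjectivity from Proposition \ref{p7.3}, injectivity of $\psi_{\lambda,R}$ from Proposition \ref{p7.1}, and verification that every $\gamma\in S_r^\lambda$ satisfies the hypothesis of Lemma \ref{l8.4} using only the surjectivity of $\alpha$. The one cosmetic difference is in the final combinatorial step, where you argue by contradiction via the inequality $\sum_{\nu_j\geq t}\nu_j\geq\sum_{\lambda_k\geq t}\lambda_k$ for all thresholds $t$ and then refute it by induction on $r$, whereas the paper directly exhibits the witness threshold $l$ (the largest block size at which the row counts of $\lambda$ and $\nu$ first differ) — but this is the same counting argument packaged differently.
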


\begin{proof}
  By proposition \ref{p7.2}, $\Pi _R  \circ \psi _{\lambda ,R} :k\left[ {S_r ^\lambda  } \right] \to C_R ^\lambda  $ is surjective.  We will show that any element $\gamma  \in S_r ^\lambda  $ satisfies the hypothesis of lemma \ref{l8.4}, so $\ker \left( {\Pi _R } \right) \cap image\left( {\psi _{\lambda ,R} } \right) = \left\{ 0 \right\}$.  Since $\psi _{\lambda ,R} :k\left[ {S_r ^\lambda  } \right] \to B_R ^\lambda  $ is injective algebra homorphism by proposition \ref{p7.1}, $\ker \left( {\Pi _R  \circ \psi _{\lambda ,R} } \right) = \left\{ 0 \right\}$.  Then $\Pi _R  \circ \psi _{\lambda ,R} :k\left[ {S_{r,R} ^\lambda  } \right] \to C_R ^\lambda  $ is an isomorphism of algebras as claimed.

     Given any $\gamma  \in S_r ^\lambda  $, consider a factorization $\gamma  = \alpha \beta \,,\,\alpha ,\beta  \in S_r  = \mathfrak{S}_r $ and any composition $\nu  < \lambda $.  Notice that $\alpha  \in \mathfrak{S}_r  \Rightarrow \alpha $ is surjective.  Since $\nu  < \lambda $, there must be some length $l$ such that $\lambda ,\nu $ have the same number of rows of any length $ > l$, while $\lambda $ has more rows of length $l$ than $\nu $ does.  Thus there are more integers in $\lambda $-blocks of size $ \geqslant l$ than in $\nu $-blocks of size $ \geqslant l$.  Then since $\alpha $ is surjective, some element in a $\lambda $-block of size $ \geqslant l$ must be the image under $\alpha $ of an element in a smaller $\nu $-block of size $ < l$.  So $\gamma $ does satisfy the hypothesis of lemma \ref{l8.4}.
\end{proof}       

     As shown in \cite{May1}, the (isomorphism classes of) irreducible representations of the tensor product $C_R ^\lambda   \cong \mathop  \otimes \limits_{s_i  > 0} \left( {k\left[ {\mathfrak{S}_{s_i } } \right]} \right)$ correspond one to one with a choice of (classes of) irreducible representations each $\mathfrak{S}_{s_i } $.  This leads to the following
\\
  
\textbf{Classification Theorem for $B_R  \cong S_k (r,n)$:}

  Let $k$ be a field of positive characteristic $p$ and let $S_r  = \mathfrak{S}_r $.  There is one isomorphism class of irreducible $B_R $
-modules for each choice of the following data:

1.  a decomposition $r = \sum\limits_{i \geqslant 0} {s_i p^i } $ for integers $s_i  \geqslant 0$ and

2.  a $p$-regular partition of $s_i $ for each $s_i  > 0$.
\\

Evidently $s_i  = 0$ for all but a finite number of $i$.  The usual classification theorem for the Schur algebra matches irreducible modules with arbitrary partitions of $r$.  It is a pleasant combinatorial exercise to match arbitrary partitions of $r$ with choices of data as in 1. and 2. above.
\\

\textbf{The case $S_r  = \tau _r $.}  
\\

     Let $\lambda $ be a $p$-partition and let $m$ and $M$ be the smallest and largest integers such that $s_i  \ne 0$.   Then $r = \sum\limits_{i = m}^M {s_i p^i } $ and $S_r ^\lambda   = \phi _\lambda  \left( {\prod\limits_{i = m}^M {\tau _{s_i } } } \right)$.  Let $S' \subseteq S_r ^\lambda  $ be the subsemigroup $S' \equiv \phi _\lambda  \left( {\tau _{s_m }  \cdot \prod\limits_{i = m + 1}^M {\mathfrak{S}_{s_i } } } \right)$. 

\begin{lemma}    \label{l8.5}
  $\Pi _R  \circ \psi _{\lambda ,R} :k\left[ {S'} \right] \to C_R ^\lambda  $ is surjective.
\end{lemma}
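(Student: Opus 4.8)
The plan is to use Proposition~\ref{p7.3}, which already yields surjectivity of $\Pi_R\circ\psi_{\lambda,R}$ from the larger algebra $k[S_r^\lambda]$, and then to discard the extra generators. It suffices to prove that $f(\lambda,\gamma,\lambda)\in\ker(\Pi_R)$ for every $\gamma\in S_r^\lambda\setminus S'$; the image of $k[S']$ then already exhausts $C_R^\lambda$. Writing $\gamma=\phi_\lambda(\prod_i\alpha_i)$, membership in $S'$ forces $\alpha_i\in\mathfrak{S}_{s_i}$ for all $i>m$, so $\gamma\notin S'$ means some $\alpha_i$ with $i>m$ is a non-bijective self-map of $\{1,\dots,s_i\}$. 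Being non-surjective, such an $\alpha_i$ omits a value, so there is a $\lambda$-block $b_e$ of size $p^i$ (with $i>m$, hence $p^i>p^m$) whose preimage is empty: $\mathrm{image}(\gamma)\cap b_e=\emptyset$.

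The key is to realize $f(\lambda,\gamma,\lambda)$ as a product that visibly passes through a strictly smaller idempotent. Fix a $\lambda$-block $b_0\subseteq\mathrm{image}(\gamma)$ of minimal size $p^m$ (one exists since $s_m>0$), let $\nu<\lambda$ be obtained from $\lambda$ by splitting $b_e$ into $p^{i-m}$ consecutive sub-blocks of size $p^m$ (all other blocks unchanged, so $G_\nu\subseteq G_\lambda$, exactly as in Lemmas~\ref{l7.3}--\ref{l7.5}), and choose $\alpha\in\tau_r$ that is the identity on $\mathrm{image}(\gamma)$ and carries each of these sub-blocks bijectively onto $b_0$ (sending any leftover points into $b_0$ as well). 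Then $\alpha\gamma=\gamma$, and $\mathrm{image}(\alpha)=\mathrm{image}(\gamma)$ still avoids $b_e$.

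I would then evaluate $f(\lambda,\alpha,\nu)*_R f(\nu,\gamma,\lambda)$ by the alternative multiplication rule. Since $\gamma\in S_r^\lambda$ gives $G_R(\lambda,\gamma,\lambda)=G_\lambda$, intersecting with $G_\nu$ gives $G_R(\nu,\gamma,\lambda)=G_\nu$; thus there is a single right coset, $a_{R,D}=1$ only for $D=D_{\alpha\gamma}=D_\gamma$, and no other double coset occurs. The product collapses to $\frac{n_R(\lambda,\gamma,\lambda)}{n_R(\lambda,\alpha,\nu)}f(\lambda,\gamma,\lambda)$, and the crux is that $n_R(\lambda,\alpha,\nu)=o(G_\lambda)$: given $\sigma\in G_\lambda$, put $\pi=\sigma$ on $\mathrm{image}(\gamma)$ and, on each size-$p^m$ sub-block $P$ of $b_e$, $\pi|_P=(\alpha|_P)^{-1}(\sigma|_{b_0})(\alpha|_P)$, a permutation of the $\nu$-block $P$; the free factor $\sigma|_{b_e}$ is harmless because $\alpha$ misses $b_e$. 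Hence $G_R(\lambda,\alpha,\nu)=G_\lambda$, the coefficient is $1$, and $f(\lambda,\gamma,\lambda)=f(\lambda,\alpha,\nu)*_R f(\nu,\gamma,\lambda)\in 1_\lambda Be_\lambda B1_\lambda=\ker(\Pi_R)$ since $1_\nu\le e_\lambda$.

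The main obstacle is exactly this equality $n_R(\lambda,\alpha,\nu)=o(G_\lambda)$. The naive choice $\alpha=1$ is useless: it gives coefficient $[G_\lambda:G_\nu]$, a multinomial coefficient $\frac{(p^i)!}{((p^m)!)^{p^{i-m}}}$ of $p$-power parts that is $\equiv 0\bmod p$, yielding no relation. What saves the argument is routing $b_e$ through the strictly smaller block $b_0$, so that the conjugation above absorbs the whole target group $G_\lambda$ using only source permutations from $G_\nu$ --- this is the mechanism of Lemma~\ref{l7.4} and is where $i>m$ (so $p^m$ properly divides $p^i$) is essential, and also explains why $S'$ must retain the full $\tau_{s_m}$ on the minimal blocks. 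I expect the only fiddly bookkeeping to be arranging $\alpha$ as a genuine total map in $\tau_r$ with $\mathrm{image}(\alpha)=\mathrm{image}(\gamma)$ while retaining the block-wise bijections $\alpha|_P$ used in the conjugation.
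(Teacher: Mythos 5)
Your proposal follows the paper's proof almost step for step: the paper likewise starts from the surjectivity of $\Pi_R\circ\psi_{\lambda,R}$ on $k[S_r^\lambda]$, reduces to showing $f(\lambda,\gamma,\lambda)\in\ker(\Pi_R)$ for $\gamma\in S_r^\lambda - S'$, locates a $\lambda$-block $b_i^\lambda$ of size $p^k>p^m$ disjoint from $\mathrm{image}(\gamma)$, forms exactly your $\nu<\lambda$ by splitting that block into $p^{k-m}$ sub-blocks of size $p^m$, constructs an auxiliary $\beta\in\tau_r$ with $\beta\gamma=\gamma$ and $n_R(\lambda,\beta,\nu)=o(G_\lambda)$, and concludes $f(\lambda,\gamma,\lambda)=f(\lambda,\beta,\nu)*_R f(\nu,\gamma,\lambda)\in B_Re_\lambda B_R$; your coefficient computation via the alternative rule (single coset, $a_{R,D_\gamma}=1$) is equivalent to the paper's via $N(D_\beta,D_\gamma,D_\gamma)=o(G_\nu)$. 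The one place you deviate is the definition of the auxiliary map on the $\lambda$-blocks lying outside $\mathrm{image}(\gamma)\cup b_e$: you send these ``leftover'' points into $b_0$, but your verification of $G_R(\lambda,\alpha,\nu)=G_\lambda$ only defines $\pi$ on $\mathrm{image}(\gamma)$ and on the sub-blocks of $b_e$. On a leftover block $b$ of size $p^j$ the identity $\sigma\alpha|_b=\alpha\,\pi|_b$ forces $\pi$ to carry the fiber of $\alpha|_b$ over $y$ bijectively onto the fiber over $\sigma(y)$ for every $\sigma\in G_\lambda$, so $\alpha|_b$ must be surjective onto $b_0$ with all fibers of equal size $p^{j-m}$ --- a condition you never impose, and an arbitrary ``send leftovers into $b_0$'' can violate it. This is fixable by choosing $\alpha|_b$ balanced, but it is an entirely self-inflicted complication: the paper's $\beta$ is simply the identity on every point outside $b_e$ (not just on $\mathrm{image}(\gamma)$), which still gives $\beta\gamma=\gamma$ because $\mathrm{image}(\gamma)\cap b_e=\emptyset$, confines the conjugation argument to the sub-blocks of $b_e$, and shows that the condition $\mathrm{image}(\alpha)=\mathrm{image}(\gamma)$ you worry about plays no role in the argument.
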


\begin{proof}
  By proposition \ref{p7.2}, $\Pi _R  \circ \psi _{\lambda ,R} :k\left[ {S_r ^\lambda  } \right] \to C_R ^\lambda  $ is surjective.  Take $\gamma  \in S_r ^\lambda   - S'$.  We will show that $f\left( {\lambda ,\gamma ,\lambda } \right)  = \psi _{\lambda ,R} \left( \gamma  \right) \in \ker \left( {\Pi _R } \right)$.  Then $\Pi _R  \circ \psi _{\lambda ,R} :k\left[ {S'} \right] \to C_R ^\lambda  $ is surjective as claimed.  If $\gamma  \in S_r ^\lambda   - S'$, then there must be some $k > m$ and a $\lambda $-block $b_i ^\lambda  $ of size $\lambda _i  = p^k  > p^m $ such that $image\left( \gamma  \right) \cap b_i ^\lambda   = \emptyset $.  Let $\nu $ be the composition obtained from $\lambda $ by replacing the $\lambda $-block $b_i ^\lambda  $ by $p^{k - m} $ $\nu $-blocks of size $p^m $.  Then $\nu  < \lambda $ and $\mathfrak{S}_\nu   \subseteq \mathfrak{S}_\lambda  $.  Define $\beta  \in \tau _r $ by letting $\beta \left( j \right) = j$ for all integers $j$ outside of the $\lambda $-block $b_i ^\lambda  $, while $\beta $ maps each of the new $\nu $-blocks one to one onto a $\lambda $-block of size $p^m $.  From the construction, we have $\beta \gamma  = \gamma $ and it is easy to check that $G_R \left( {\lambda ,\beta ,\nu } \right) = G_\lambda  $ so $n_R (\lambda ,\beta ,\nu ) = o\left( {\mathfrak{S}_\lambda  } \right)$.  Since $\gamma  \in S_r ^\lambda  $, we have $G_R \left( {\lambda ,\gamma ,\lambda } \right) = G_\lambda  $ and since $\mathfrak{S}_\nu   \subseteq \mathfrak{S}_\lambda  $, we have $G_R \left( {\nu ,\gamma ,\lambda } \right) = G_R \left( {\lambda ,\gamma ,\lambda } \right) \cap G_\nu   = G_\nu  $.  So $n_R (\lambda ,\gamma ,\lambda ) = o\left( {\mathfrak{S}_\lambda  } \right)$ and  $n_R (\nu ,\gamma ,\lambda ) = o\left( {\mathfrak{S}_\nu  } \right)$.  Finally, for any $\rho  \in \mathfrak{S}_\nu  $ there is a $\pi  \in G_\lambda  $ such that $\rho \gamma  = \gamma \pi $.  Then $\beta \rho \gamma  = \beta \gamma \pi  = \gamma \pi  \in D_\gamma  $.  So $N\left( {D_\beta  ,D_\gamma  ,D} \right) = o\left( {G_\nu  } \right)$ if $D = D_\gamma  $ and is $0$ otherwise.  The multiplication rule then gives:
\[f\left( {\lambda ,\beta ,\nu } \right) * _R f\left( {\nu ,\gamma ,\lambda } \right) = \frac{{o\left( {G_\nu  } \right) \cdot n_R (\lambda ,\gamma ,\lambda )}}
{{n_R (\lambda ,\beta ,\nu )n_R (\nu ,\gamma ,\lambda )}} \cdot f\left( {\lambda ,\gamma ,\lambda } \right)\]
\[ = \frac{{o\left( {G_\nu  } \right) \cdot o\left( {G_\lambda  } \right)}}
{{o\left( {G_\nu  } \right) \cdot o\left( {G_\lambda  } \right)}} \cdot f\left( {\lambda ,\gamma ,\lambda } \right) = f\left( {\lambda ,\gamma ,\lambda } \right).\]  So $f\left( {\lambda ,\gamma ,\lambda } \right) = \psi _{\lambda ,R} \left( \gamma  \right) \in \ker \left( {\Pi _R } \right)$ as claimed.  
\end{proof}

\begin{lemma}     \label{l8.6}
  $\Pi _R  \circ \psi _{\lambda ,R} :k\left[ {S'} \right] \to C_R ^\lambda  $ is injective.
\end{lemma}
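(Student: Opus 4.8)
The plan is to deduce injectivity from Lemma \ref{l8.4}, in exactly the way injectivity was obtained for $S_r=\mathfrak{S}_r$ in Proposition \ref{p8.2}: I will verify that every $\gamma\in S'$ meets the hypothesis of Lemma \ref{l8.4}. Recall that $\psi_{\lambda,R}$ is an injective algebra homomorphism (Proposition \ref{p7.1}) carrying the distinct elements of $S'\subseteq S_r^\lambda$ to distinct double-coset basis elements $f(\lambda,\gamma,\lambda)$. Hence if $z=\sum_{\gamma\in S'}c_\gamma\,\gamma\in k[S']$ lies in $\ker(\Pi_R\circ\psi_{\lambda,R})$, then $\psi_{\lambda,R}(z)=\sum_{\gamma\in S'}c_\gamma\,f(\lambda,\gamma,\lambda)$ is an element of $\ker(\Pi_R)\subseteq B_R^\lambda$ already expressed in the double-coset basis. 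So it suffices to show that each such $\gamma$ satisfies the hypothesis of Lemma \ref{l8.4}; that lemma then forces every $c_\gamma=0$, whence $z=0$ and the map is injective.

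So I would fix $\gamma\in S'$, an arbitrary factorization $\gamma=\alpha\beta$ with $\alpha,\beta\in\tau_r$, and a composition $\nu<\lambda$, and produce an integer $i$ whose $\nu$-block is strictly smaller than the $\lambda$-block containing $\alpha(i)$. The structural fact about $S'$ that I would exploit is that $\gamma$ permutes the blocks of each size $>p^m$ among themselves (its components for $i>m$ lie in $\mathfrak{S}_{s_i}$), so $\image(\gamma)$ contains every integer lying in a $\lambda$-block of size $>p^m$; since $\image(\gamma)\subseteq\image(\alpha)$, the same holds for $\alpha$. As in the proof of Proposition \ref{p8.2}, from $\nu<\lambda$ I extract the largest size $l$ with $L(\lambda,s)=L(\nu,s)$ for all $s>l$ and $L(\lambda,l)>L(\nu,l)$; because $L(\lambda,l)>0$, the size $l$ is an actual part of $\lambda$ and hence $l\geq p^m$. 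Writing $P_\lambda$ (resp.\ $P_\nu$) for the set of integers lying in $\lambda$-blocks (resp.\ $\nu$-blocks) of size $\geq l$, I get $\#P_\lambda-\#P_\nu=l\bigl(L(\lambda,l)-L(\nu,l)\bigr)>0$.

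The argument then splits on $l$. If $l>p^m$, every integer of $P_\lambda$ sits in a $\lambda$-block of size $>p^m$ and hence lies in $\image(\alpha)$; thus $\alpha^{-1}(P_\lambda)$ surjects onto $P_\lambda$, has at least $\#P_\lambda>\#P_\nu$ elements, and so cannot be contained in $P_\nu$, giving some $i\in\alpha^{-1}(P_\lambda)\setminus P_\nu$. If instead $l=p^m$, then $P_\lambda$ is all of $\{1,\dots,r\}$ (every $\lambda$-block already has size $\geq p^m$) while $\#P_\nu<r$, so I may simply take any $i\notin P_\nu$; since $\alpha\in\tau_r$ is total, $\alpha(i)\neq 0$ lands in some $\lambda$-block, necessarily of size $\geq p^m=l$. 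In either case $i$ lies in a $\nu$-block of size $<l$ while $\alpha(i)$ lies in a $\lambda$-block of size $\geq l$, which is exactly the hypothesis of Lemma \ref{l8.4}. The one genuine obstacle, I expect, is the failure of $\alpha$ to be surjective (in contrast with the $\mathfrak{S}_r$ case of Proposition \ref{p8.2}); this is precisely what forces the case distinction. The resolution is that membership in $S'$ guarantees coverage of the large blocks by $\image(\gamma)$ when $l>p^m$, while for $l=p^m$ surjectivity is irrelevant because every block of $\lambda$ is already of maximal possible size relative to $\nu$.
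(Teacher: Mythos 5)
Your proposal is correct and follows essentially the same route as the paper: reduce to verifying the hypothesis of Lemma \ref{l8.4} for every $\gamma\in S'$, and then use the counting argument that the large $\lambda$-blocks are covered by $\image(\gamma)\subseteq\image(\alpha)$ while $\nu<\lambda$ forces fewer integers into large $\nu$-blocks. The paper organizes the case split by whether some $\nu$-block has size $<p^m$ rather than by whether $l>p^m$ or $l=p^m$, but this is only a cosmetic difference in the same argument.
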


\begin{proof}
  We will show that any element $\gamma  \in S'$ satisfies the hypothesis of lemma \ref{l8.4}, so $\ker \left( {\Pi _R } \right) \cap image\left( {\psi _{\lambda ,R} \left| {k\left[ {S'} \right]} \right.} \right) = \left\{ 0 \right\}$.  Since $\psi _{\lambda ,R} :k\left[ {S'} \right] \to B_R ^\lambda  $ is an injective algebra homorphism by proposition \ref{p7.1}, $\ker \left( {\Pi _R  \circ \psi _{\lambda ,R} \left| {k\left[ {S'} \right]} \right.} \right) = \left\{ 0 \right\}$ and $\Pi _R  \circ \psi _{\lambda ,R} :k\left[ {S'} \right] \to C_R ^\lambda  $ is injective.   

     Take any $\gamma  \in S' \subseteq S_r ^\lambda  $, any factorization $\gamma  = \alpha \beta \,,\,\alpha ,\beta  \in \tau _r $, and any composition $\nu  < \lambda $.  If there is any $\nu $-block of size less than the smallest $\lambda $-block size $p^m $, then for any $i$ in such a $\nu $-block, $\alpha \left( i \right)$ must lie in a larger size $\lambda $-block, so the hypothesis of lemma \ref{l8.4} is satisfied.  If all $\nu $-blocks have size $ \geqslant p^m $, then $\nu  < \lambda $ implies there exists an integer $k > p^m $ such that $L\left( {\nu ,i} \right) = L\left( {\lambda ,i} \right){\text{ for }}i > k$, while $L\left( {\nu ,k} \right) < L\left( {\lambda ,k} \right)$.  So there are more integers in $\lambda $-blocks of size $ \geqslant k > p^m $ than in $\nu $-blocks of size $ \geqslant k$.  However, for $\gamma  \in S'$, any integer in a $\lambda $-block of size $ > p^m $ is in $image\left( \gamma  \right)$ and therefore also in $image\left( \alpha  \right)$.  It follows that there must be some integer $i$ in a $\nu $-block of size $ < k$ such that $\alpha \left( i \right)$ is in a $\lambda $-block of size $ \geqslant k$.  So the hypothesis of lemma \ref{l8.4} is again satisfied and the proof is complete. 
\end{proof}

     Combining lemmas \ref{l8.5} and \ref{l8.6} gives

\begin{proposition}    \label{p8.3}
  $\Pi _R  \circ \psi _{\lambda ,R} :k\left[ {S'} \right] \to C_R ^\lambda  $ is a isomorphism of $k$-algebras.  There are isomorphisms of $k$-algebras $C_R ^\lambda   \cong k[S'] \cong k\left[ {\tau _{s_m }  \cdot \prod\limits_{i = m + 1}^M {\mathfrak{S}_{s_i } } } \right] \cong k\left[ {\tau _{s_m } } \right] \otimes \left( {\mathop  \otimes \limits_{i = m + 1}^M k\left[ {\mathfrak{S}_{s_i } } \right]} \right)$ .
\end{proposition}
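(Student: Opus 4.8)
The plan is to assemble the isomorphism directly from the two preceding lemmas, since the substantive work has already been carried out there. First I would observe that the restriction of $\Pi_R \circ \psi_{\lambda,R}$ to $k[S']$ is a $k$-algebra homomorphism: by Proposition \ref{p7.1}, $\psi_{\lambda,R}$ is an algebra homomorphism on all of $k[S_r^\lambda]$, and since $S'$ is a subsemigroup of $S_r^\lambda$, the subalgebra $k[S']$ is mapped by $\psi_{\lambda,R}$ just as a restricted algebra homomorphism; composing with the algebra projection $\Pi_R$ preserves this. Lemma \ref{l8.5} shows this homomorphism is surjective, and Lemma \ref{l8.6} shows it is injective, so it is a bijective algebra homomorphism and hence an isomorphism of $k$-algebras. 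This gives the first link in the chain, $C_R^\lambda \cong k[S']$.

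For the remaining identifications I would appeal to standard facts about semigroup algebras. Recall from section 7 that $\phi_\lambda$ is an injective semigroup homomorphism, and that by definition $S' = \phi_\lambda\left(\tau_{s_m} \cdot \prod_{i=m+1}^M \mathfrak{S}_{s_i}\right)$. Hence $\phi_\lambda$ restricts to a semigroup isomorphism from $\tau_{s_m} \cdot \prod_{i=m+1}^M \mathfrak{S}_{s_i}$ onto $S'$, and isomorphic semigroups have isomorphic semigroup algebras, so $k[S'] \cong k\left[\tau_{s_m} \cdot \prod_{i=m+1}^M \mathfrak{S}_{s_i}\right]$.

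Finally, the monoid $\tau_{s_m} \cdot \prod_{i=m+1}^M \mathfrak{S}_{s_i}$ is, by construction, the internal direct product of the commuting submonoids $\tau_{s_m}$ and $\mathfrak{S}_{s_i}$ for $m < i \leq M$. The semigroup algebra of a finite direct product of monoids is canonically the tensor product of the individual semigroup algebras: the map sending a tuple of basis elements to the corresponding elementary tensor extends to an algebra isomorphism, since both sides are free $k$-modules on the product of the underlying sets and multiplication is componentwise. This yields $k\left[\tau_{s_m} \cdot \prod_{i=m+1}^M \mathfrak{S}_{s_i}\right] \cong k[\tau_{s_m}] \otimes \left(\otimes_{i=m+1}^M k[\mathfrak{S}_{s_i}]\right)$, completing the chain.

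I do not anticipate any real obstacle here: once Lemmas \ref{l8.5} and \ref{l8.6} are granted, surjectivity together with injectivity of an algebra map forces an isomorphism, and the two further identifications are routine. The only points that require a moment's care are confirming that restricting $\psi_{\lambda,R}$ to the subalgebra $k[S']$ (rather than working on all of $k[S_r^\lambda]$) still yields an algebra homomorphism, which holds because $S'$ is closed under the product, and recording that the direct-product-to-tensor-product identification is an isomorphism of algebras, not merely of $k$-vector spaces, because the multiplications match componentwise.
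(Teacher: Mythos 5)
Your proposal is correct and matches the paper's approach exactly: the paper simply states that Proposition \ref{p8.3} follows by combining Lemmas \ref{l8.5} (surjectivity) and \ref{l8.6} (injectivity), with the remaining identifications of $k[S']$ as a tensor product of semigroup algebras being the same routine consequence of $\phi_\lambda$ being an injective semigroup homomorphism that you describe. Your write-up merely makes explicit the standard details the paper leaves implicit.
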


     So irreducible $B_R $-modules at level $\lambda $ correspond to irreducible $C_R ^\lambda  $-modules, which correspond to a choice of an irreducible $k\left[ {\tau _{s_m } } \right]$ module and irreducible $k\left[ {\mathfrak{S}_{s_i } } \right]$ modules for each $m + 1 \leqslant i \leqslant M$.  These in turn are classified by a $p$-regular partition of $j$ for some $1 \leqslant j \leqslant s_m $ and $p$-regular partitions of $s_i $ for each $s_i  > 0\,,\,i > m$.  We can now state the 
\\

\textbf{Classification Theorem for $B_R \,,\,S_r  = \tau _r $:}

Let $k$ be a field of positive characteristic $p$ and $S_r  = \tau _r $.  There is one isomorphism class of irreducible $B_R $-modules for each choice of the following data:  

1.  a decomposition $r = \sum\limits_{i \geqslant m} {s_i p^i } $ for integers $s_i  \geqslant 0\,,\,s_m  > 0$ and

2.  a $p$-regular partition of $s_i $ for each $s_i  > 0\,,\,i > m$ and

3.  an integer $j$ with $1 \leqslant j \leqslant s_m $ and

4.  a $p$-regular partition of $j$.
\\

An irreducible $B$-module corresponding to such data will be at level $\lambda $ where $\lambda $ is the partition with $s_i $ blocks of size $p^i $ and will have index $\bar i = \sum\limits_{i > m} {s_i p^i }  + jp^m $.

Notice that for $p < r$ we must have $s_i  = 0\,,\,\forall i > 0$, so we have $C_R ^\lambda   = \left\{ {\begin{array}{*{20}c}
   {k\left[ {\tau _r } \right]\,,\,\lambda  = \bar \nu }  \\
   {0\,,\,\,\,\,\,\,\lambda  > \bar \nu }  \\
 \end{array} } \right.$.  So irreducible $B$-modules correspond to irreducible $k\left[ {\tau _r } \right]$ modules as shown in \cite{MA}.
\\

\textbf{The case $S_r  \supseteq \Re _r $}   
\\

In this section we will assume that $S_r  \supseteq \Re _r $ (the rook algebra) and that $k$ is a field of positive characteristic $p$.  For example, we could have $S_r  = \Re _r $ or $S_r  = \bar \tau _r $.  Our analysis will follow the pattern for the case $S_r  = \tau _r $ given above.  Let $\lambda $ be a $p$-partition.  Let $S' \subseteq S_r ^\lambda  $ be the subsemigroup $S' \equiv \phi _\lambda  \left( {\bar \tau _{s_{\,0} }  \cdot \prod\limits_{i > 0}^{} {\mathfrak{S}_{s_i } } } \right) \cap S_r ^\lambda   = \phi _\lambda  \left( {S_0  \cdot \prod\limits_{i > 0}^{} {\mathfrak{S}_{s_i } } } \right)$ where $S_0 $ is a semigroup with $\bar \tau _{s_0 }\supseteq S_0 \supseteq \Re _{s_0 }$.  (When $S_r  = \Re _r $ we have $S_0  = \Re _{s_0 } $; when $S_r  = \bar \tau _r $ we have $S_0  = \bar \tau _{s_0 } $ .) 

\begin{lemma}    \label{l8.7}
  $\Pi _R  \circ \psi _{\lambda ,R} :k\left[ {S'} \right] \to C_R ^\lambda  $ is surjective.
\end{lemma}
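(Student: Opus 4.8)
The plan is to mirror the proof of Lemma~\ref{l8.5} (the $\tau_r$ case), taking advantage of the extra feature of the present hypothesis $S_r\supseteq\Re_r$: the zero (partial) map is now available in $S_r$. By Proposition~\ref{p7.3} we already know $\Pi_R\circ\psi_{\lambda,R}:k[S_r^\lambda]\to C_R^\lambda$ is surjective, so it is enough to show that every double-coset basis element $f(\lambda,\gamma,\lambda)$ with $\gamma\in S_r^\lambda-S'$ already lies in $\ker(\Pi_R)$. Once this is established, $\Pi_R$ maps the $k$-span of $\{D_{\alpha'}:\alpha'\in S'\}=\psi_{\lambda,R}(k[S'])$ onto all of $C_R^\lambda$, which is exactly the assertion.

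First I would identify the shape of a $\gamma\in S_r^\lambda-S'$. Writing $\gamma=\phi_\lambda(\prod_i\gamma_i)$ with $\gamma_i\in\bar\tau_{s_i}$, membership in $S'$ forces $\gamma_i\in\mathfrak{S}_{s_i}$ for every $i>0$. Hence $\gamma\notin S'$ means some $\gamma_i$ with $i>0$ fails to be a permutation of $\{1,\dots,s_i\}$ and so misses a value; translating through the definition of $\phi_\lambda$, this produces a $\lambda$-block $b_l^\lambda$ of size $p^k$ with $k>0$ (so $p^k>1$) such that $\image(\gamma)\cap b_l^\lambda=\emptyset$. This is precisely the configuration exploited in Lemma~\ref{l8.5}, except that there the deficient block had size exceeding the smallest block size $p^m$, whereas here it suffices that its size exceed $1$.

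Next I would construct the witnessing factorization through a lower level. Let $\nu<\lambda$ be obtained from $\lambda$ by breaking $b_l^\lambda$ into $p^k$ singleton blocks, so $G_\nu\subseteq G_\lambda$, and define $\beta$ to be the identity off $b_l^\lambda$ and to send every element of $b_l^\lambda$ to $0$; then $\beta\in\Re_r\subseteq S_r$. Since $\image(\gamma)$ avoids $b_l^\lambda$ and $\beta$ fixes everything outside $b_l^\lambda$, we get $\beta\gamma=\gamma$. Because $\gamma\in S_r^\lambda$ we have $G_R(\lambda,\gamma,\lambda)=G_\lambda$, whence $G_R(\nu,\gamma,\lambda)=G_R(\lambda,\gamma,\lambda)\cap G_\nu=G_\nu$, giving $n_R(\lambda,\gamma,\lambda)=o(G_\lambda)$ and $n_R(\nu,\gamma,\lambda)=o(G_\nu)$. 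For any $\rho\in G_\nu\subseteq G_\lambda$ there is $\pi\in G_\lambda$ with $\rho\gamma=\gamma\pi$, so $\beta\rho\gamma=\beta\gamma\pi=\gamma\pi\in D_\gamma$; thus $N(D_\beta,D_\gamma,D)=o(G_\nu)$ for $D=D_\gamma$ and $0$ otherwise. Granting $n_R(\lambda,\beta,\nu)=o(G_\lambda)$, the multiplication rule then gives
\[
f(\lambda,\beta,\nu)*_R f(\nu,\gamma,\lambda)=\frac{o(G_\nu)\cdot n_R(\lambda,\gamma,\lambda)}{n_R(\lambda,\beta,\nu)\,n_R(\nu,\gamma,\lambda)}\,f(\lambda,\gamma,\lambda)=f(\lambda,\gamma,\lambda).
\]
Since the middle index satisfies $\nu<\lambda$, the left-hand product factors through $e_\lambda$ and so lies in $B_R e_\lambda B_R\cap B_R^\lambda=\ker(\Pi_R)$; hence $f(\lambda,\gamma,\lambda)\in\ker(\Pi_R)$, as required.

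The step I expect to be the main obstacle is the remaining claim $G_R(\lambda,\beta,\nu)=G_\lambda$, i.e. $n_R(\lambda,\beta,\nu)=o(G_\lambda)$, because this is where the zero map must be handled with care (it replaces the ``map onto a size-$p^m$ block'' device of Lemma~\ref{l8.5}). I would verify the nontrivial inclusion $G_\lambda\subseteq G_R(\lambda,\beta,\nu)$ directly: given $\sigma\in G_\lambda$, set $\pi=\sigma$ off $b_l^\lambda$ and $\pi=\mathrm{id}$ on $b_l^\lambda$. Then $\pi\in G_\nu$, since the singleton $\nu$-blocks inside $b_l^\lambda$ impose no constraint there and $\sigma$ already permutes within the unchanged blocks outside. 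One checks $\sigma\beta=\beta\pi$ by cases, using $\sigma(0)=0$ and $\sigma(b_l^\lambda)=b_l^\lambda$: on $b_l^\lambda$ both sides are $0$, while off $b_l^\lambda$ both sides equal $\sigma$. The reverse inclusion is automatic from the definition, so $G_R(\lambda,\beta,\nu)=G_\lambda$ and the proof is complete.
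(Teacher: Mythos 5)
Your proof is correct and follows essentially the same route as the paper's: reduce via the surjectivity of $\Pi_R\circ\psi_{\lambda,R}$ on $k\left[S_r^\lambda\right]$, locate a block $b_l^\lambda$ of size $p^k>1$ missed by $\image(\gamma)$, split it into singletons to form $\nu<\lambda$, and use the rook-monoid element $\beta$ (identity off $b_l^\lambda$, zero on it) with $\beta\gamma=\gamma$ to factor $f(\lambda,\gamma,\lambda)$ through $1_\nu=e_\lambda 1_\nu$. Your explicit verification of $G_R(\lambda,\beta,\nu)=G_\lambda$ fills in a step the paper labels as ``easy to check,'' and it is accurate.
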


\begin{proof}
  By proposition \ref{p7.2}, $\Pi _R  \circ \psi _{\lambda ,R} :k\left[ {S_r ^\lambda  } \right] \to C_R ^\lambda  $ is surjective.  Take $\gamma  \in S_r ^\lambda   - S'$.  We will show that $f\left( {\lambda ,\gamma ,\lambda } \right) = \psi _{\lambda ,R} \left( \gamma  \right) \in \ker \left( {\Pi _R } \right)$.  Then $\Pi _R  \circ \psi _{\lambda ,R} :k\left[ {S'} \right] \to C_R ^\lambda  $ is surjective as claimed.  If $\gamma  \in S_r ^\lambda   - S'$, then there must be some $k > 0$ and a $\lambda $-block $b_i ^\lambda  $ of size $\lambda _i  = p^k  > 1$ such that $image\left( \gamma  \right) \cap b_i ^\lambda   = \emptyset $.  Let $\nu $ be the composition obtained from $\lambda $ by replacing the $\lambda $-block $b_i ^\lambda  $ by $p^k $ $\nu $-blocks of size $1$.  Then $\nu  < \lambda $ and $G_\nu   \subseteq G_\lambda  $.  Define $\beta  \in \Re _r  \subseteq S_r $ by letting $\beta\left(j\right) = j$ for all integers $j$ outside of the $\lambda $-block $b_i ^\lambda  $, while $\beta $ maps each of the new $\nu $-blocks to 0.  From the construction, we have $\beta \gamma  = \gamma $ and it is easy to check that $G_R \left( {\lambda ,\beta ,\nu } \right) = G_\lambda  $ so $n_R (\lambda ,\beta ,\nu ) = o\left( {\mathfrak{S}_\lambda  } \right)$.  Since $\gamma  \in S_r ^\lambda  $, we have $G_R \left( {\lambda ,\gamma ,\lambda } \right) = G_\lambda  $ and since $G_\nu   \subseteq G_\lambda  $, we have $G_R \left( {\nu ,\gamma ,\lambda } \right) = G_R \left( {\lambda ,\gamma ,\lambda } \right) \cap G_\nu   = G_\nu  $.  So $n_R (\lambda ,\gamma ,\lambda ) = o\left( {\mathfrak{S}_\lambda  } \right)$ and  $n_R (\nu ,\gamma ,\lambda ) = o\left( {\mathfrak{S}_\nu  } \right)$.  Finally, for any $\rho  \in \mathfrak{S}_\nu  $ there is a $\pi  \in G_\lambda  $ such that $\rho \gamma  = \gamma \pi $.  Then $\beta \rho \gamma  = \beta \gamma \pi  = \gamma \pi  \in D_\gamma  $.  So $N\left( {D_\beta  ,D_\gamma  ,D} \right) = o\left( {G_\nu  } \right)$ if $D = D_\gamma  $ and is $0$ otherwise.  The multiplication rule then gives:
\[f\left( {\lambda ,\beta ,\nu } \right) * _R f\left( {\nu ,\gamma ,\lambda } \right) = \frac{{o\left( {G_\nu  } \right) \cdot n_R (\lambda ,\gamma ,\lambda )}}
{{n_R (\lambda ,\beta ,\nu )n_R (\nu ,\gamma ,\lambda )}} \cdot f\left( {\lambda ,\gamma ,\lambda } \right)\]
\[= \frac{{o\left( {G_\nu  } \right) \cdot o\left( {G_\lambda  } \right)}}
{{o\left( {G_\nu  } \right) \cdot o\left( {G_\lambda  } \right)}} \cdot f\left( {\lambda ,\gamma ,\lambda } \right) = f\left( {\lambda ,\gamma ,\lambda } \right).\]
So $f\left( {\lambda ,\gamma ,\lambda } \right) = \psi _{\lambda ,R} \left( \gamma  \right) \in \ker \left( {\Pi _R } \right)$ as claimed.  
\end{proof}

\begin{lemma}    \label{l8.8}
  $\Pi _R  \circ \psi _{\lambda ,R} :k\left[ {S'} \right] \to C_R ^\lambda  $ is injective.
\end{lemma}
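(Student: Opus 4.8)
The plan is to follow the pattern already set by Lemma~\ref{l8.6} for the case $S_r=\tau_r$. By Proposition~\ref{p7.1}, $\psi_{\lambda,R}$ is an injective $k$-algebra homomorphism on all of $k[S_r^\lambda]$, hence on the subalgebra $k[S']$, and distinct elements of $S'\subseteq S_r^\lambda$ give distinct double cosets, so $\image(\psi_{\lambda,R}|_{k[S']})$ is spanned by the basis elements $f(\lambda,\gamma,\lambda)$, $\gamma\in S'$. It therefore suffices to prove that $\ker(\Pi_R)\cap\image(\psi_{\lambda,R}|_{k[S']})=\{0\}$, for then $\ker(\Pi_R\circ\psi_{\lambda,R}|_{k[S']})=\{0\}$ and the map is injective (the surjectivity from Lemma~\ref{l8.7} then upgrades this to the isomorphism asserted afterward). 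I would establish the triviality of this intersection by verifying that every $\gamma\in S'$ satisfies the hypothesis of Lemma~\ref{l8.4}; that lemma then forces the coefficient $c_\gamma$ of $f(\lambda,\gamma,\lambda)$ to vanish for any $x\in\ker(\Pi_R)$, so any element of the intersection, being a combination of such $f(\lambda,\gamma,\lambda)$ with $\gamma\in S'$, must be zero.

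To verify the hypothesis of Lemma~\ref{l8.4}, I would fix $\gamma\in S'$, an arbitrary factorization $\gamma=\alpha\beta$ with $\alpha,\beta\in S_r$, and a composition $\nu<\lambda$. The crucial structural feature of $S'$ is that the factor $\prod_{i>0}\mathfrak{S}_{s_i}$ permutes the $\lambda$-blocks of size $>1$ bijectively among themselves; hence every integer lying in a $\lambda$-block of size $>1$ belongs to $\image(\gamma)$, and therefore to $\image(\alpha)$ since $\gamma=\alpha\beta$. Next, because $\nu<\lambda$ strictly, I would take $k$ to be the largest block size at which the multiplicities differ, so that $L(\nu,i)=L(\lambda,i)$ for $i>k$ while $L(\nu,k)<L(\lambda,k)$. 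One checks that $k>1$: if the multiplicities agreed for every size $>1$ they would agree at size $1$ as well, since both $\nu$ and $\lambda$ sum to $r$, forcing $\nu\sim\lambda$ and contradicting $\nu<\lambda$. Consequently there are strictly more integers in $\lambda$-blocks of size $\geqslant k$ than in $\nu$-blocks of size $\geqslant k$. Since all integers in $\lambda$-blocks of size $\geqslant k>1$ lie in $\image(\alpha)$, choosing one nonzero preimage under $\alpha$ for each yields a collection of distinct source integers strictly larger than the set of integers contained in $\nu$-blocks of size $\geqslant k$; by pigeonhole at least one such preimage $i$ lies in a $\nu$-block of size $<k$, while $\alpha(i)$ lies in a $\lambda$-block of size $\geqslant k$. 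This is exactly the conclusion demanded by Lemma~\ref{l8.4}.

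The delicate point, and the only place where the argument genuinely differs from the $\tau_r$ case, is that here $\alpha,\beta\in S_r\supseteq\Re_r$ may send integers to $0$, so $\alpha$ need not be surjective and the bare surjectivity appeal used in Lemma~\ref{l8.6} is unavailable. The resolution I would emphasize is that membership $\gamma\in S'$ already guarantees the blocks of size $>1$ are hit, through the symmetric-group factors, and the inclusion $\image(\gamma)\subseteq\image(\alpha)$ transfers this to $\alpha$; the $\bar\tau_{s_0}$ factor, which is responsible for all loss of information to $0$, acts only on the size-$1$ blocks and so becomes irrelevant once we know $k>1$. I expect the bookkeeping establishing $k>1$ and the pigeonhole count to require the most care, but no idea beyond those already present in Lemmas~\ref{l8.4} and~\ref{l8.6} should be needed.
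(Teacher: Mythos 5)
Your proposal is correct and takes essentially the same route as the paper's proof: reduce injectivity to showing $\ker\left( {\Pi _R } \right) \cap \image\left( {\psi _{\lambda ,R} \left| {k\left[ {S'} \right]} \right.} \right) = \left\{ 0 \right\}$ via the hypothesis of Lemma \ref{l8.4}, observe that every integer in a $\lambda$-block of size $>1$ lies in $\image(\gamma) \subseteq \image(\alpha)$ because the $\prod_{i>0}\mathfrak{S}_{s_i}$ factor permutes those blocks, and then count integers in blocks of size $\geqslant k$ at the largest size $k>1$ where the multiplicities of $\nu$ and $\lambda$ differ. Your explicit justifications that $k>1$ and of the pigeonhole step simply fill in details the paper leaves implicit.
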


\begin{proof}
  We will show that any element $\gamma  \in S'$ satisfies the hypothesis of lemma \ref{l8.4}, so $\ker \left( {\Pi _R } \right) \cap image\left( {\psi _{\lambda ,R} \left| {k\left[ {S'} \right]} \right.} \right) = \left\{ 0 \right\}$.  Since $\psi _{\lambda ,R} :k\left[ {S'} \right] \to B_R ^\lambda  $ is an injective algebra homorphism by proposition \ref{p7.1}, $\ker \left( {\Pi _R  \circ \psi _{\lambda ,R} \left| {k\left[ {S'} \right]} \right.} \right) = \left\{ 0 \right\}$ and $\Pi _R  \circ \psi _{\lambda ,R} :k\left[ {S'} \right] \to C_R ^\lambda  $ is injective.   

     Take any $\gamma  \in S' \subseteq S_r ^\lambda  $, any factorization $\gamma  = \alpha \beta \,,\,\alpha ,\beta  \in \tau _r $, and any composition $\nu  < \lambda $.  Since $\nu  < \lambda $,  there exists an integer $k > 1$ such that $L\left( {\nu ,i} \right) = L\left( {\lambda ,i} \right){\text{ for }}i > k$, while $L\left( {\nu ,k} \right) < L\left( {\lambda ,k} \right)$.  So there are more integers in $\lambda $-blocks of size $ \geqslant k > 1$ than in $\nu $-blocks of size $ \geqslant k$.  However, for $\gamma  \in S'$, any integer in a $\lambda $-block of size $ > 1$ is in $image\left( \gamma  \right)$ and therefore also in $image\left( \alpha  \right)$.  It follows that there must be some integer $i$ in a $\nu $-block of size $ < k$ such that $\alpha \left( i \right)$ is in a $\lambda $-block of size $ \geqslant k$.  So the hypothesis of lemma \ref{l8.4} is again satisfied and the proof is complete.  
\end{proof}

     Combining lemmas \ref{l8.7} and \ref{l8.8} gives

\begin{proposition}     \label{p8.4}
  $\Pi _R  \circ \psi _{\lambda ,R} :k\left[ {S'} \right] \to C_R ^\lambda  $ is a isomorphism of $k$-algebras.  There are isomorphisms of $k$-algebras $C_R ^\lambda   \cong k[S'] \cong k\left[ {S_0  \cdot \prod\limits_{i > 0}^{} {\mathfrak{S}_{s_i } } } \right] \cong k\left[ {S_0 } \right] \otimes \left( {\mathop  \otimes \limits_{i > 0}^{} k\left[ {\mathfrak{S}_{s_i } } \right]} \right)$ .
\end{proposition}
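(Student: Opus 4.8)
The plan is to obtain the isomorphism directly by combining the two preceding lemmas and then to read off the tensor decomposition from the product structure of $S'$. First I would note that $\Pi _R \circ \psi _{\lambda ,R}$, restricted to the subalgebra $k[S'] \subseteq k[S_r^\lambda]$, is a $k$-algebra homomorphism: by Proposition \ref{p7.1} the map $\psi _{\lambda ,R}$ is an algebra homomorphism, and since $S'$ is a submonoid of $S_r^\lambda$ the identity $\psi _{\lambda ,R}(\alpha)\psi _{\lambda ,R}(\beta) = \psi _{\lambda ,R}(\alpha\beta)$ continues to hold for all $\alpha,\beta \in S'$; composing with the algebra projection $\Pi _R$ preserves this. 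Lemma \ref{l8.7} then gives surjectivity of this composite and Lemma \ref{l8.8} gives injectivity, so it is a bijective algebra homomorphism, hence an isomorphism $k[S'] \cong C_R^\lambda$.

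Next I would identify $S'$ as an (internal) direct product of monoids. By definition $S' = \phi _\lambda\bigl(S_0 \cdot \prod_{i>0} \mathfrak{S}_{s_i}\bigr)$, and since $\phi _\lambda$ is an injective semigroup homomorphism its restriction is a monoid isomorphism $S' \cong S_0 \cdot \prod_{i>0} \mathfrak{S}_{s_i}$. The factor $S_0$ and the various $\mathfrak{S}_{s_i}$ act on disjoint collections of $\lambda$-blocks (those of size $1$ and those of size $p^i$ respectively), so they commute elementwise and $S'$ is the direct product $S_0 \times \prod_{i>0} \mathfrak{S}_{s_i}$.

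Finally, applying the standard monoid-algebra identity $k[A \times B] \cong k[A] \otimes k[B]$ repeatedly to this direct product yields $k[S'] \cong k[S_0] \otimes \bigl(\otimes_{i>0} k[\mathfrak{S}_{s_i}]\bigr)$; together with the isomorphism $C_R^\lambda \cong k[S']$ from the first step, this produces the entire chain of isomorphisms in the statement. I do not expect any genuine obstacle, since all the substantive work -- controlling the kernel and image of $\Pi _R$ on the appropriate span -- has already been carried out in Lemmas \ref{l8.7} and \ref{l8.8}; the only points needing (routine) care are that the restriction of $\psi _{\lambda ,R}$ to $k[S']$ remains multiplicative and that $\phi _\lambda$ carries the abstract product monoid isomorphically onto $S'$.
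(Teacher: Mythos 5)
Your proposal is correct and follows essentially the same route as the paper: the paper simply states that Proposition \ref{p8.4} follows by combining Lemma \ref{l8.7} (surjectivity) with Lemma \ref{l8.8} (injectivity), and then reads off the tensor decomposition from the product structure of $S'$ via $\phi_\lambda$, exactly as you do. The routine details you supply (multiplicativity of the restriction of $\psi_{\lambda,R}$ to $k[S']$ and the identity $k[A\times B]\cong k[A]\otimes k[B]$) are left implicit in the paper but are the intended justification.
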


    Since $\bar \tau _{s_0 }  \supseteq S_0  \supseteq \Re _{s_0 } $, the irreducible representations of $k\left[ {S_0 } \right]$ correspond to irreducible $k\left[ {\mathfrak{S}_j } \right]$ modules for some index $1 \leqslant j \leqslant s_0 $ or to the trivial one dimensional representation of $k\left[ {\Re _{s_0 } } \right]$ of index 0.  So as for the $S_r  = \tau _r $ case we have the 
\\

\textbf{Classification Theorem for $B_R $ when $S_r  \supseteq \Re _r $:}

Assume $S_r  \supseteq \Re _r $ and let $k$ be a field of positive characteristic $p$.  There is one isomorphism class of irreducible $B_R $
-modules for each choice of the following data:  

1.  a decomposition $r = \sum\limits_{i \geqslant 0} {s_i p^i } $ for integers $s_i  \geqslant 0\,$ and

2.  a $p$-regular partition of $s_i $ for each $s_i  > 0\,,\,i > 0$ and

3.  an integer $j$ with $0 \leqslant j \leqslant s_0 $ and

4.  a $p$-regular partition of $j$ if $j > 0$. 
\\

An irreducible $B$-module corresponding to such data will be at level $\lambda $ where $\lambda $ is the partition with $s_i $ blocks of size $p^i $ and will have index $\bar i = \sum\limits_{i > 0} {s_i p^i }  + j$.


\begin{thebibliography}{99}


\bibitem{May1} R. May, Representations of certain generalized Schur algebras, J. Algebra 333(2011)180-201.
   
\bibitem{May2} R. May, Double coset algebras, J. of Pure and Applied Mathematics, 218(2014)2081-2095 

\bibitem{MA} R. May, W. Abrams, A generalization of the Schur algebra to $k[\tau_r]$, J. Algebra 295(2006)524 - 542.


\end{thebibliography}
\end{document}